\documentclass[11pt,reqno]{amsart}

\usepackage[T1]{fontenc}
\usepackage{lmodern}
\usepackage{microtype}
\usepackage{amsmath,amssymb,mathtools}
\usepackage{xcolor}
\usepackage{array,booktabs,float}
\usepackage{enumitem}
\usepackage[margin=1.10in]{geometry}
\usepackage[colorlinks=true,linkcolor=blue!55!black,citecolor=blue!55!black,urlcolor=blue!55!black]{hyperref}

\newtheorem{theorem}{Theorem}[section]
\newtheorem{proposition}[theorem]{Proposition}
\newtheorem{lemma}[theorem]{Lemma}
\newtheorem{corollary}[theorem]{Corollary}
\theoremstyle{definition}
\newtheorem{definition}[theorem]{Definition}

\newtheorem{remark}[theorem]{Remark}

\newcommand{\kk}{\Bbbk}
\newcommand{\Sym}{\operatorname{Sym}}
\newcommand{\Ann}{\operatorname{Ann}}
\newcommand{\Soc}{\operatorname{Soc}}
\newcommand{\im}{\operatorname{im}}
\newcommand{\Aut}{\operatorname{Aut}}
\newcommand{\Hom}{\operatorname{Hom}}
\newcommand{\GL}{\operatorname{GL}}
\newcommand{\PGL}{\operatorname{PGL}}
\newcommand{\ad}{\operatorname{ad}}
\newcommand{\rank}{\operatorname{rank}}
\newcommand{\Ext}{\operatorname{Ext}}
\newcommand{\Der}{\operatorname{Der}}
\newcommand{\cL}{\mathcal L}
\newcommand{\cZ}{\mathcal Z}
\newcommand{\fg}{\mathfrak g}
\newcommand{\fh}{\mathfrak h}
\newcommand{\NR}{\mathrm{NR}}

\title[Quot-stack moduli and transverse deformations]
{Quot-Stack Moduli and Transverse Deformations\\
of Graded Metabelian Lie Algebras}
\author{Marcel Blattner}
\address{Applied AI Research Lab, Lucerne University of Applied Sciences
  and Arts, Suurstoffi 1, 6343 Rotkreuz, Switzerland}
\email{marcel.blattner@hslu.ch}
\date{August 2026}

\subjclass[2020]{17B30, 17B56, 13D40, 14D15}
\keywords{metabelian Lie algebra, inverse system, graded Quot scheme,
level algebra, deformation cohomology, binary quartic pencil}

\begin{document}
\raggedbottom

\begin{abstract}
We identify the graded metabelian locus inside the deformation theory of
positively graded Lie algebras.  Let $M(U)$ be the free metabelian Lie
algebra on $U$ and $B(U)=M(U)'$.  For every finite graded rank vector $h$,
we prove the stack equivalence
\[
\mathcal M^{\mathrm{met,gr}}_{U,h}
\simeq[\operatorname{Quot}^{\mathrm{gr}}_h(B(U))/\GL(U)]
\]
and identify its tangent complex at a quotient $B(U)\twoheadrightarrow C$
with kernel $N$ as
\[
[\mathfrak{gl}(U)\longrightarrow\Hom_{\Sym(U)}(N,C)_0].
\]
For every algebra $\fg$ in this stack, restriction to $\Lambda^2\fg'$ induces
a defect map on $H^2_0(\fg;\fg)$, and we prove that its kernel is
$H^0$ of the Quot-stack tangent complex.  Thus a first-order
deformation is tangent to the metabelian locus exactly when its
derived--derived restriction vanishes.  We also recover the inverse-system
module degree by degree from intrinsic lower-central tensors; on the level
locus its terminal tensor suffices.

For the $14$-dimensional algebra attached to a regular pencil of binary
quartics, exact computation gives $\dim H^2_0=11$ and $H^3_0=0$.  Its
effective miniversal graded slice is smooth of dimension $11$, while its
metabelian locus is a smooth $3$-fold.  Terminal restriction identifies
the $8$-dimensional normal space with derived--derived brackets.  Using the
pencil's classical $V_4$-symmetry, we determine its induced representation
on the graded tangent space and show that four negative-weight primary
obstruction maps are surjective.
\end{abstract}

\maketitle

\section{Introduction}

For a metabelian Lie algebra $\fg$, the adjoint actions of
$U=\fg/\fg'$ commute on $\fg'$:
\[
[\ad_u,\ad_v]|_{\fg'}=\ad_{[u,v]}|_{\fg'}=0,
\]
so $\fg'$ is a $\Sym(U)$-module.  This elementary observation is the link
between metabelian Lie laws and commutative moduli.  The problem addressed
here is to identify that moduli space and its normal directions inside all
graded Lie deformations.

\paragraph{Principal theorem.}
The principal new results are Theorems~\ref{thm:quot-stack} and
\ref{thm:general-defect}.  Fix the degree-one space $U$ and a finite
derived-rank vector $h$, and let $B(U)$ be the derived module of the free
metabelian Lie algebra on $U$.  Formation gives
\[
\mathcal M^{\mathrm{met,gr}}_{U,h}
\simeq[Q_h/\GL(U)],\qquad
Q_h=\operatorname{Quot}^{\mathrm{gr}}_h(B(U)).
\]
At $q:B(U)\twoheadrightarrow C$, with kernel $N$,
the tangent complex is
\[
[\mathfrak{gl}(U)\longrightarrow\Hom_R(N,C)_0],
\]
and restriction of adjoint cocycles gives
\[
\Delta_{\fg}:H^2_0(\fg;\fg)
\longrightarrow\Hom(\Lambda^2\fg',\fg)_0,
\qquad
\ker\Delta_{\fg}
\simeq H^0[\mathfrak{gl}(U)\to\Hom_R(N,C)_0].
\]
Thus the Quot directions are exactly the graded metabelian deformations,
whereas $\Delta_{\fg}$ detects transverse derived--derived brackets.
Corollary~\ref{cor:metabelian-smoothness} records the resulting dimension
formula and an $\Ext^1$-vanishing criterion for formal smoothness.

The novelty here is the relative Lie-theoretic Quot-stack equivalence and
its identification with the derived--derived restriction kernel.  The
Koszul presentation of $B(U)$ and module inverse-system duality are inputs
from \cite{PapadimaSuciu2004,Reutenauer1993,KleimanKleppe2025}.  Tangent
cohomology for $k$-solvable law schemes was developed in
\cite[Appendix]{BarrionuevoTiraoSulca2023}; in the present graded
metabelian setting its linearized identity reduces to
$\phi|_{\Lambda^2\fg'}=0$, and the Quot geometry identifies the kernel.
For the broader Koszul-module context, see
\cite[Proposition~14.5]{Suciu2026}.

\paragraph{Structural consequences.}
Proposition~\ref{prop:terminal-tensor} recovers every homogeneous
inverse-system piece from an intrinsic lower-central tensor.  It follows
that abstract and graded isomorphism have the same orbit criterion for all
formation algebras; on the level locus the terminal tensor alone suffices.
The degree-zero boundary recovers the classical alternating-map problem
for two-step nilpotent Lie algebras
\cite{LegerLuks1972,Gauger1973,GalitskiTimashev1999}.  In rank three the
parameters become determinant-twisted divergence-free polynomial vector
fields, and Theorem~\ref{thm:scalar-nonfaithful} shows that scalar ideals
lose information.  Rank two reduces to binary inverse systems.

\paragraph{Complete application.}
For the $14$-dimensional formation algebra $\fg_\circ$ of a regular binary
quartic pencil, Theorem~\ref{thm:quartic-synthesis} gives a smooth
$11$-dimensional effective graded slice and a smooth $3$-dimensional
metabelian locus.  Terminal restriction identifies its normal space as
\[
H^2_0/T_{\mathrm{met}}
\simeq\Hom(L_2\otimes L_4,L_6)
\oplus\Hom(\Lambda^2L_3,L_6).
\]
Equivalently, the local picture is $11=3+8$: pencil motion supplies the
metabelian directions and terminal derived--derived brackets supply the
normal directions.  The same calculation determines the effective
$V_4$-action and proves surjectivity of four negative-weight primary
obstruction maps.  Appendix~\ref{app:verification} records a compact exact
certificate; the archive contains the matrices, cocycles, and pivot data.

Section~\ref{sec:formation-functor} proves the Quot-stack and tangent
theorems.  Sections~\ref{sec:level-recovery}--\ref{sec:rank-three} develop
intrinsic recovery and the rank-two/rank-three specializations.  The final
two sections give the quartic-pencil calculation and its formal moduli.

Throughout, $\kk$ is a field of characteristic zero.  Divided-power
notation may therefore be translated into normalized ordinary
derivatives.

\section{Main theorem: Quot-stack moduli and metabelian tangent}
\label{sec:formation-functor}

We first isolate the commutative model and then prove the relative
formation and tangent theorems.

\subsection{The commutator module and its dual}
\label{sec:commutator-module}

Let $U$ be an $n$-dimensional vector space, let $R=\Sym(U)$, and let
\[
M(U)=\operatorname{Lie}(U)/\operatorname{Lie}(U)'',
\qquad B(U)=M(U)'.
\]
The ideal $B(U)$ is abelian.  For $u\in U$ and $b\in B(U)$ set
$u\cdot b=[u,b]$.  The operators $\ad_u|_{B(U)}$ commute, so this action
extends to $R$.  We put basic commutators in internal module degree zero;
internal degree $r$ then has Lie degree $r+2$.

Let
\[
\partial_3:R(-1)\otimes\Lambda^3U\longrightarrow
R\otimes\Lambda^2U
\]
be the degree-zero Koszul map
\[
\partial_3(p\otimes u\wedge v\wedge w)
=pu\otimes v\wedge w-pv\otimes u\wedge w
+pw\otimes u\wedge v.
\]

\begin{proposition}[{Koszul presentation; \cite[Theorem~6.2]{PapadimaSuciu2004}}]
\label{prop:koszul-presentation}
There is a natural graded $R$-module presentation
\[
R(-1)\otimes\Lambda^3U\xrightarrow{\partial_3}
R\otimes\Lambda^2U\longrightarrow B(U)\longrightarrow0,
\]
where $1\otimes(u\wedge v)$ maps to $[u,v]$.
\end{proposition}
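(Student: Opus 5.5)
We construct the map, show it is surjective, show $\partial_3$ lands in its kernel, and then prove exactness by a dimension count.

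\textbf{The map and surjectivity.} Define $\pi:R\otimes\Lambda^2U\to B(U)$ by
\[
\pi(p\otimes u\wedge v)=p\cdot[u,v],
\]
where $p\in R$ acts through the commuting operators $\ad|_{B(U)}$. This is well defined, $R$-linear and degree-preserving, since internal degree $r$ is Lie degree $r+2$.

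Surjectivity: $B(U)=M(U)'$ is spanned by left-normed commutators $[x_1,\dots,x_k]$ with $k\ge2$ and $x_i\in U$. Each of these equals $\ad_{x_k}\cdots\ad_{x_3}[x_1,x_2]$, which lies in the image of $\pi$.

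\textbf{The complex.} $\pi\circ\partial_3$ vanishes on $1\otimes u\wedge v\wedge w$ because
\[
[u,[v,w]]-[v,[u,w]]+[w,[u,v]]=0
\]
is the Jacobi identity. Since both maps are $R$-linear, $\pi\partial_3=0$ everywhere. Hence $\pi$ factors through $\operatorname{coker}\partial_3$, and it remains to show that the induced surjection $\bar\pi:\operatorname{coker}\partial_3\to B(U)$ is injective.

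\textbf{Injectivity.} This is the main obstacle. I would prove it by building an inverse, namely a metabelian Lie algebra structure on $L=U\oplus C$ with $C=\operatorname{coker}\partial_3$.

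Set $[u,v]=\overline{1\otimes u\wedge v}$ and $[u,c]=u\cdot c$. Let $C$ be abelian, and extend by antisymmetry. The Jacobi identity then has the following cases.
\begin{itemize}
\item Three elements of $U$: the identity is exactly the relation $\partial_3=0$ in $C$.
\item Two elements of $U$ and one of $C$: the identity $[u,[v,c]]-[v,[u,c]]=[[u,v],c]=0$ holds because $R$ is commutative.
\item Terms with at least two elements of $C$ vanish.
\end{itemize}
So $L$ is a Lie algebra generated by $U$. It is metabelian, because $L'\subseteq C$ and $C$ is abelian.

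By the universal property of $M(U)$, there is a Lie map $M(U)\to L$ that is the identity on $U$. It sends $B(U)$ into $C$ by an $R$-linear map satisfying $[u,v]\mapsto\overline{1\otimes u\wedge v}$. This map inverts $\bar\pi$ on generators, so $\bar\pi$ is an isomorphism.

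\textbf{Naturality.} Functoriality in $U$ is clear, since all the maps are $\GL(U)$-equivariant and are built from $\Lambda^\bullet U$ and the bracket.
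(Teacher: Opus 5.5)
Your proof is correct, and it takes a genuinely different route from the paper. The paper gives no argument of its own: it imports the statement from \cite[Theorem~6.2]{PapadimaSuciu2004}, where it is the relation-free special case of the presentation of the infinitesimal Alexander invariant $\mathfrak h'/\mathfrak h''$ of a quadratic (holonomy) Lie algebra.

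You prove the free case from scratch. The Jacobi identity gives $\pi\partial_3=0$, and spanning by iterated commutators gives surjectivity. The decisive step is that $U\oplus\operatorname{coker}\partial_3$ with your bracket is already a metabelian Lie algebra: Jacobi on three elements of $U$ is precisely the Koszul relation, and on $u,v,c$ it is commutativity of $R$. The universal property of $M(U)$ then yields an $R$-linear left inverse of $\bar\pi$.

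What your approach buys:
\begin{itemize}
\item It is self-contained.
\item It works verbatim over any commutative base ring, so it directly gives the relative presentation of $B_A$ used in Section~\ref{subsec:relative-formation}.
\item Naturality is evident.
\item It exhibits $U\oplus\operatorname{coker}\partial_3$ as an explicit model of $M(U)$, matching the $\fg=U\oplus C$ picture of Theorems~\ref{thm:general-equivalence} and~\ref{thm:general-defect}.
\end{itemize}
What the citation buys is generality: it covers arbitrary holonomy Lie algebras, with an extra summand for the quadratic relations, and it connects to the Koszul-module framework.

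Two cosmetic points. Your opening sentence promises a dimension count that you never use; the inverse construction replaces it. Also, a left-normed commutator $[x_1,\dots,x_k]$ equals $(-1)^{k}\ad_{x_k}\cdots\ad_{x_3}[x_1,x_2]$, not $\ad_{x_k}\cdots\ad_{x_3}[x_1,x_2]$; this sign is harmless for the spanning argument.
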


Let
\[
D=R^\circ=\bigoplus_{r\ge0}R_r^*
\]
be the restricted graded divided-power dual, with contraction action
defined by
\[
\langle b,p\circ\Phi\rangle=\langle pb,\Phi\rangle.
\]
Degreewise dualizing Proposition~\ref{prop:koszul-presentation} gives
\[
B(U)^\circ
=\ker\left(
D\otimes\Lambda^2U^*
\xrightarrow{\delta}
D\otimes\Lambda^3U^*
\right),
\]
where
\[
(\delta\Phi)(u,v,w)
=u\circ\Phi(v,w)-v\circ\Phi(u,w)+w\circ\Phi(u,v).
\]
Write
\begin{equation}\label{eq:closed-two-forms}
\cZ^2(U)=B(U)^\circ.
\end{equation}
This is the closed-form model implicit in the free-Lie and Koszul
descriptions of \cite{Reutenauer1993,Kapranov2012,Kapranov2015}.

\subsection{Formation}

For a graded submodule on either side of the degreewise perfect pairing
$B(U)\times B(U)^\circ\to\kk$, write $(-)^\perp$ for its annihilator.
For finite-codimensional $N\subset B(U)$ and finite-dimensional
$W\subset\cZ^2(U)$, degreewise duality gives
\[
(N^\perp)^\perp=N,\qquad (W^\perp)^\perp=W,\qquad
(B(U)/W^\perp)^\circ\simeq W.
\]
See \cite[Theorem~3.5]{KleimanKleppe2025}.

\begin{definition}[formation functor]
\label{def:formation}
For a finite-dimensional graded submodule $W\subset\cZ^2(U)$ set
\begin{equation}\label{eq:gW}
\mathfrak F(U,W)=\fg_W:=M(U)/W^\perp.
\end{equation}
\end{definition}

Let $\mathsf{InvMet}^{\mathrm{gr}}$ be the groupoid whose objects are pairs
$(U,W)$ as above and whose morphisms
$P:(U,W)\to(U',W')$ are linear isomorphisms such that the induced
restricted-dual pullback carries $W'$ onto $W$.  Let
$\mathsf{Met}^{\mathrm{gr}}$ be the groupoid of finite-dimensional
positively graded metabelian Lie algebras generated by degree one, with
graded isomorphisms.

\begin{theorem}[inverse-system equivalence]
\label{thm:general-equivalence}
The assignment $\mathfrak F:(U,W)\mapsto\fg_W$ is a functor and an
equivalence of groupoids
\[
\mathsf{InvMet}^{\mathrm{gr}}
\simeq
\mathsf{Met}^{\mathrm{gr}}.
\]
In particular, its essential image consists exactly of the
finite-dimensional positively graded metabelian Lie algebras generated by
their degree-one component.
\end{theorem}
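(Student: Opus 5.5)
We will prove the theorem by matching graded metabelian Lie algebras with graded quotients of $M(U)$, and then translating those quotients into submodules $W$ through the duality $N\leftrightarrow N^\perp$ recalled above.

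\emph{Step 1: $\fg_W$ is a legitimate Lie algebra and $\mathfrak F$ is a functor.} Because $W$ is a graded $R$-submodule of $\cZ^2(U)=B(U)^\circ$, its annihilator $W^\perp$ is a graded $R$-submodule of $B(U)$. Concretely, for $\Phi\in W$ and $b\in W^\perp$ we have $\langle ub,\Phi\rangle=\langle b,u\circ\Phi\rangle=0$. Since $B(U)$ is abelian and $U$ generates $M(U)$, an $R$-submodule of $B(U)$ is exactly a Lie ideal of $M(U)$ contained in $M(U)'$. Hence $\fg_W=M(U)/W^\perp$ is a graded metabelian Lie algebra. It is generated by its degree-one piece $U$, which injects because $W^\perp\subset B(U)$ lives in Lie degree $\ge2$. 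It is finite-dimensional because
\[
\fg_W'\simeq B(U)/W^\perp\simeq W^\circ
\]
and $W$ is finite-dimensional.

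For functoriality, a linear isomorphism $P:U\to U'$ induces a graded isomorphism $M(P):M(U)\to M(U')$, since $M$ is a free functor. This map is semilinear over $\Sym(P)$ on $B$, and the restricted-dual pullback $P^*$ is its transpose. The condition $P^*W'=W$ is therefore equivalent to $M(P)(W^\perp)=W'^\perp$, so $M(P)$ descends to $\fg_W\to\fg_{W'}$. Compatibility with composition and identities is inherited from the functor $M$.

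\emph{Step 2: essential surjectivity.} Let $\fg$ be in $\mathsf{Met}^{\mathrm{gr}}$ and put $U=\fg_1$. By the universal property of free metabelian Lie algebras, the inclusion $U\hookrightarrow\fg$ extends to a graded surjection $\pi:M(U)\to\fg$, surjective because $\fg$ is generated in degree one. Since $\pi$ is an isomorphism in degree one, $N=\ker\pi$ is a graded ideal contained in $\bigoplus_{d\ge2}M(U)_d=B(U)$, hence an $R$-submodule of finite codimension. Set $W=N^\perp$. This is a finite-dimensional graded submodule of $\cZ^2(U)$, and the double-annihilator identity $(N^\perp)^\perp=N$ from \cite[Theorem~3.5]{KleimanKleppe2025} gives $\fg\simeq\fg_W$.

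\emph{Step 3: full faithfulness.} Let $\alpha:\fg_W\to\fg_{W'}$ be a graded isomorphism. It restricts to a linear isomorphism $P:U\to U'$ in degree one. Both $\alpha\circ\mathrm{pr}$ and $\mathrm{pr}'\circ M(P)$ are maps $M(U)\to\fg_{W'}$ that agree on the generators $U$. By uniqueness in the universal property they coincide, so $\alpha=\mathfrak F(P)$. The same identity shows that $M(P)$ carries $W^\perp$ onto $W'^\perp$, and dualizing via $(W^\perp)^\perp=W$ gives $P^*W'=W$. Thus $P$ is a morphism of $\mathsf{InvMet}^{\mathrm{gr}}$, which proves fullness. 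Faithfulness is immediate, because $P$ is recovered as the degree-one part of $\mathfrak F(P)$.

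The main obstacle is the bookkeeping in Step 1 and Step 3: we must verify that "$P^*W'=W$" really corresponds to "$M(P)W^\perp=W'^\perp$". This requires checking that the restricted-dual pullback is the degreewise transpose of $M(P)|_{B(U)}$ under the pairing, compatibly with the $\Sym(P)$-twisted module structures. We plan to handle it degree by degree: in each internal degree the pairing is perfect and finite-dimensional, so $\perp$ commutes with transpose and linear algebra applies. The final sentence of the theorem, describing the essential image, is exactly the content of Steps 1 and 2.
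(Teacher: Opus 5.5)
Your proposal is correct and follows essentially the same route as the paper. Functoriality comes from naturality of $M(U)$. Essential surjectivity comes from the kernel $N\subset B(U)$ of the universal surjection, giving $\fg\simeq\fg_{N^\perp}$. Full faithfulness holds because graded maps are determined in degree one and descend exactly when the induced map on $B(U)$ carries $W^\perp$ onto $W'^\perp$, i.e.\ when $P^*W'=W$. Your explicit transpose computation and your check that $R$-submodules of $B(U)$ are exactly the ideals of $M(U)$ lying in $M(U)'$ supply bookkeeping the paper leaves implicit.
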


\begin{proof}
Naturality of the free Lie algebra and of the Koszul presentation makes
\eqref{eq:gW} functorial.  Conversely, let $\fg$ be generated by
$U=\fg_1$ and be metabelian.  The universal map
$M(U)\twoheadrightarrow\fg$ is the identity in degree one.  Its homogeneous
kernel therefore lies in $B(U)$ and is an $R$-submodule $N$.  Hence
\[
\fg\simeq M(U)/N=\fg_{N^\perp}.
\]
Finite dimensionality makes $N^\perp$ finite dimensional.  This proves
essential surjectivity.

A graded map between two such quotients is determined by its restriction to
degree one.  It descends precisely when its induced map on $B(U)$ carries
one kernel to the other, equivalently when the restricted-dual pullback
carries the corresponding inverse-system submodule onto the other.  This
proves full faithfulness.
\end{proof}

\subsection{Relative inverse systems and the graded Quot stack}
\label{subsec:relative-formation}

Let $A$ be a commutative $\kk$-algebra and put
\[
U_A=A\otimes_\kk U,\qquad
R_A=\Sym_A(U_A),\qquad
B_A=A\otimes_\kk B(U).
\]
Every graded piece of $B_A$ is finite free over $A$.  Its restricted
dual is
\[
B_A^\vee=\bigoplus_{r\ge0}\Hom_A((B_A)_r,A)
\simeq
\ker\left(
D_A\otimes_A\Lambda^2U_A^\vee
\xrightarrow{\delta_A}
D_A\otimes_A\Lambda^3U_A^\vee
\right),
\]
where $D_A=\bigoplus_r\Hom_A((R_A)_r,A)$.  Denote this relative module of
closed divided-power $2$-forms by $\cZ_A^2(U)$.

Fix a finite-support sequence $h=(h_r)_{r\ge0}$ of nonnegative integers.

\begin{proposition}[relative inverse-system duality]
\label{prop:relative-inverse-systems}
Annihilators give mutually inverse, base-change-compatible
correspondences between
\begin{enumerate}[label=\textup{(\roman*)}]
\item homogeneous $R_A$-submodules $N\subset B_A$ for which
$C=B_A/N$ has $C_r$ finite projective of rank $h_r$;
\item finite-support contraction-stable submodules
$W\subset\cZ_A^2(U)$ for which every $W_r$ is an $A$-direct summand of
$(B_A^\vee)_r$ of rank $h_r$.
\end{enumerate}
Under the correspondence,
\[
W=N^\perp\simeq C^\vee,\qquad
C\simeq W^\vee,\qquad N=W^\perp.
\]
\end{proposition}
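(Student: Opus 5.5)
The plan is to reduce everything to degreewise linear algebra over $A$, then check that the module conditions match under the pairing. Each graded piece $(B_A)_r=A\otimes_\kk B(U)_r$ is finite free, and $(B_A^\vee)_r=\Hom_A((B_A)_r,A)$. Hence the pairing $(B_A)_r\times(B_A^\vee)_r\to A$ is perfect in each degree. The identification $B_A^\vee\simeq\cZ^2_A(U)$ is obtained by base-changing Proposition~\ref{prop:koszul-presentation}. Right exactness of $A\otimes_\kk-$ keeps the presentation exact. Applying the left exact functor $\Hom_A(-,A)$ degreewise then yields the kernel of $\delta_A$, as already recorded above.

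The first step is the degreewise correspondence for a finite free $A$-module $F$ of rank $m$ with dual $F^\vee$. I will show that $N\mapsto N^\perp$ and $W\mapsto W^\perp$ are mutually inverse bijections between two classes. The first class consists of submodules $N\subset F$ with $F/N$ finite projective of rank $h$. The second consists of direct summands $W\subset F^\vee$ of rank $h$.

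\begin{itemize}
\item If $C=F/N$ is projective, the sequence $0\to N\to F\to C\to0$ splits. So $N$ is a summand, and dualizing gives the split sequence $0\to C^\vee\to F^\vee\to N^\vee\to0$. Thus $N^\perp=C^\vee$ is a summand of rank $h$.
\item Conversely, for a summand $W$ with complement $W'$, we have $F\simeq F^{\vee\vee}=W^\vee\oplus W'^\vee$. This gives $W^\perp=W'^\vee$ and $F/W^\perp\simeq W^\vee$, projective of rank $h$.
\item The double-annihilator identities $(N^\perp)^\perp=N$ and $(W^\perp)^\perp=W$ follow from the same splittings. Concretely, choosing a complement reduces each identity to the case of a coordinate summand of a free module.
\end{itemize}

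Taking finite-support $h$ forces $N_r=(B_A)_r$ and $W_r=0$ for $r\gg0$. So the finite-support conditions match.

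The second step checks that the module structures correspond. By the defining identity $\langle pb,\Phi\rangle=\langle b,p\circ\Phi\rangle$, extended $A$-linearly, $N$ is stable under $R_A$ exactly when $N^\perp$ is stable under contraction. Indeed, if $N$ is a submodule, $\Phi\in N^\perp$ and $n\in N$, then $\langle n,p\circ\Phi\rangle=\langle pn,\Phi\rangle=0$. Symmetrically, if $W$ is contraction-stable then $W^\perp$ is an $R_A$-submodule. Combined with the bijectivity of the first step, this gives mutually inverse correspondences (i)$\leftrightarrow$(ii). The identifications $W\simeq C^\vee$ and $C\simeq W^\vee$ are the degreewise isomorphisms above. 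They are compatible with the actions, since the contraction action on $C^\vee$ is the transpose of multiplication on $C$.

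The last step, which I expect to be the main subtlety, is base-change compatibility. For $A\to A'$, one must show two things:
\begin{itemize}
\item $(A'\otimes_A N)\to B_{A'}$ is injective, with image the kernel of $B_{A'}\to A'\otimes_A C$;
\item $(N\otimes_A A')^\perp=A'\otimes_A N^\perp$ inside $B_{A'}^\vee=A'\otimes_A B_A^\vee$.
\end{itemize}
Both follow because every sequence involved is split exact in each degree, with finite projective terms. Tensoring preserves split exactness and commutes with $\Hom_A(-,A)$ on finite projective modules. Without projectivity of $C_r$ (respectively the summand condition on $W_r$) this fails, which is why those hypotheses appear. Contraction-stability and the submodule property are preserved under base change because the actions are $A$-linear extensions of the $\kk$-structures.
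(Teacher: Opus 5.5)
Your proposal is correct and follows essentially the same route as the paper. It uses degreewise splitting of the finite projective quotients and summands, and dualization to identify $N_r^\perp$ with $C_r^\vee$ and $W_r^\perp$ with the kernel of $(B_A)_r\twoheadrightarrow W_r^\vee$. It then uses preservation of split exactness under $A\to A'$, and the adjunction $\langle pn,\Phi\rangle=\langle n,p\circ\Phi\rangle$ together with the double-annihilator identities to match $R_A$-stability with contraction-stability. You spell out the double-annihilator and rank bookkeeping that the paper handles tersely and by citing Kleiman--Kleppe.
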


\begin{proof}
Base change in the Koszul presentation and degreewise dualization give the
displayed description of $B_A^\vee$.  If $C_r$ is finite projective, then
$0\to N_r\to(B_A)_r\to C_r\to0$ splits over $A$, so dualization identifies
$N_r^\perp$ with $C_r^\vee$.  Conversely, a direct-summand inclusion
$W_r\hookrightarrow(B_A^\vee)_r$ dualizes to a surjection
$(B_A)_r\twoheadrightarrow W_r^\vee$ with kernel $W_r^\perp$.  The
split exact sequences remain exact after every map $A\to A'$, so
$(N^\perp)\otimes_AA'=(N\otimes_AA')^\perp$ degree by degree; the same
argument applies to $W^\perp$.  Finally,
$\langle rn,w\rangle=\langle n,r\circ w\rangle$, and the degreewise
double-annihilator identities show that $N$ is $R_A$-stable if and only
if $N^\perp$ is contraction-stable.  They also give the two asserted
double-annihilator identities; compare
\cite[Theorem~3.5]{KleimanKleppe2025}.
\end{proof}

Let
\[
Q_h=\operatorname{Quot}^{\mathrm{gr}}_h(B(U))
\]
be the graded Quot scheme of these quotients.  If
$h=0$, this is a point; for inadmissible $h$ it may be empty.  Otherwise,
with
$s=\max\{r:h_r\ne0\}$, it is the closed incidence subscheme of the product
of quotient Grassmannians
\[
\prod_{r=0}^s\operatorname{Gr}_{h_r}(B(U)_r)
\]
on which the universal kernels satisfy
$U N_r\subseteq N_{r+1}$ for $0\le r<s$.  Thus $Q_h(A)$ consists of
homogeneous quotients $B_A\twoheadrightarrow C_A$ with graded ranks $h$,
up to isomorphism of the target.

Let $\mathcal M^{\mathrm{met,gr}}_{U,h}$ be the following stack over
$\operatorname{Spec}\kk$.  Over a scheme $S$, its objects are positively
graded metabelian $\mathcal O_S$-Lie algebras
$\fg_S=\bigoplus_{i\ge1}(\fg_S)_i$, generated by the locally free bundle
$(\fg_S)_1$ of rank $\dim U$, such that every graded piece is locally free
and
\[
\operatorname{rank}(\fg_S)_{r+2}=h_r\qquad(r\ge0).
\]
Morphisms are graded Lie isomorphisms.  We let $\GL(U)$ act on $Q_h$ by
$a\cdot q=q\circ\widetilde a^{-1}$, where $\widetilde a$ is the induced
automorphism of $B(U)$.

\begin{theorem}[graded Quot-stack description]
\label{thm:quot-stack}
Formation induces an equivalence
\[
\mathcal M^{\mathrm{met,gr}}_{U,h}
\simeq[Q_h/\GL(U)].
\]
For a local $\kk$-algebra $A$, this says that the groupoid of such
$A$-Lie algebras, after choosing a frame of their degree-one part, is the
action groupoid $[Q_h(A)/\GL(U)(A)]$.
\end{theorem}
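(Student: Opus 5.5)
The plan is to build a morphism of stacks $\Phi:[Q_h/\GL(U)]\to\mathcal M^{\mathrm{met,gr}}_{U,h}$, and then show it is fully faithful and locally essentially surjective. This is the relative version of Theorem~\ref{thm:general-equivalence}, with Proposition~\ref{prop:relative-inverse-systems} supplying the base-change input. The presentation of the quotient stack is the standard one: an object over $S$ is a $\GL(U)$-torsor $P\to S$ together with an equivariant map $P\to Q_h$. Because both sides are fppf stacks, it is enough to work on the prestack level over local (or arbitrary affine) bases, where torsors are trivial, and then stackify. This is the reason the statement is phrased in terms of local $\kk$-algebras $A$.

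\emph{Construction on objects and morphisms.} Take $q:B_A\twoheadrightarrow C_A$ in $Q_h(A)$ with kernel $N$. Set
\[
\fg_q=M(U)_A/N=U_A\oplus C_A,
\]
with the bracket induced from $M(U)_A=A\otimes M(U)$. We need $N$ to be a Lie ideal. It is an $R_A$-submodule, and $[B,B]=0$, so $[M(U)_A,N]\subseteq N$. The graded pieces are $U_A$ and the $C_r$, which are projective of rank $\dim U$ and $h_r$. The algebra is generated in degree one and is metabelian because $B_A$ is abelian. Quotients are compatible with base change, since $N$ is a degreewise direct summand. So $\Phi$ is a morphism from $Q_h$ into the stack of framed objects.

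For $a\in\GL(U)(A)$, the automorphism of $M(U)_A$ induced by $a$ carries $\ker q$ onto $\ker(a\cdot q)=\widetilde a(N)$, because $a\cdot q=q\circ\widetilde a^{-1}$. It therefore descends to a graded isomorphism $\fg_q\to\fg_{a\cdot q}$. Functoriality of the free metabelian Lie algebra makes this compatible with composition, so $\Phi$ is $\GL(U)$-equivariant and descends to $[Q_h/\GL(U)]$.

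\emph{Full faithfulness.} Let $f:\fg_q\to\fg_{q'}$ be a graded isomorphism over $A$. Its degree-one part is some $a\in\GL(U)(A)$. By the universal property of $M(U)_A$, the map $\widetilde a:M(U)_A\to M(U)_A$ lifts $f$. Both $f$ and the map induced by $\widetilde a$ are determined by degree one, since everything is generated in degree one. Hence $f$ exists, and is then unique, exactly when $\widetilde a(N)=N'$, that is, when $q'=a\cdot q$ up to isomorphism of the target. So the set of morphisms $q\to q'$ in the action groupoid maps bijectively onto the set of isomorphisms $\fg_q\to\fg_{q'}$. The same argument works over any affine base, which gives isomorphism of Hom-sheaves.

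\emph{Local essential surjectivity.} Let $\fg_S$ be an object of $\mathcal M^{\mathrm{met,gr}}_{U,h}$. Zariski-locally, say over $\operatorname{Spec}A$ with $A$ local, the bundle $(\fg_S)_1$ is free, so a frame $U_A\simeq(\fg_A)_1$ exists. The universal property of the free metabelian algebra gives a graded surjection $M(U)_A\twoheadrightarrow\fg_A$. It is surjective because $\fg_A$ is generated in degree one, and it is the identity in degree one. Its kernel $N$ therefore lies in $B_A$, and it is an $R_A$-submodule because it is an ideal. The quotient has $C_r=(\fg_A)_{r+2}$, which is locally free of rank $h_r$, so $B_A\to C_A$ is a point of $Q_h(A)$. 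Changing the frame changes this point by the $\GL(U)(A)$-action. The frame bundle is therefore a $\GL(U)$-torsor $P\to S$ carrying an equivariant map to $Q_h$, and $\Phi$ sends this object back to $\fg_S$.

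\emph{Main obstacle.} I expect the main care to be needed in verifying that the Quot-scheme conditions match the stack conditions in families. Membership in $Q_h(A)$ asks that each $C_r$ be finite projective and that $N$ satisfy $UN_r\subseteq N_{r+1}$ as a degreewise direct summand. On the other side, the objects of $\mathcal M^{\mathrm{met,gr}}_{U,h}$ have locally free graded pieces. The key point to check is that the kernel of a surjection between degreewise finite free and projective modules is automatically a direct summand. It is, because the quotient is projective. The remaining step is to check that these identifications are compatible with pullback along $A\to A'$, which is where Proposition~\ref{prop:relative-inverse-systems} is used. Once that is in place, the equivalence over local $A$ globalizes by descent for both stacks.
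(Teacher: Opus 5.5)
Your proposal is correct and follows essentially the paper's own argument. It uses the degree-one frame torsor, the universal surjection from the free metabelian algebra (whose kernel is a degreewise direct summand because the target pieces are locally free), and the fact that graded isomorphisms are determined in degree one and correspond to $\widetilde a(N)=N'$; you only package the descent as ``fully faithful plus locally essentially surjective'' rather than as two mutually inverse constructions on the frame torsor, and your one inaccuracy is harmless: $\GL(U)$-torsors are trivial over local bases but only Zariski-locally trivial over arbitrary affine ones, which is all your essential-surjectivity step actually uses.
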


\begin{proof}
Put $G=\GL(U)$ and let $S$ be a $\kk$-scheme.  For
$\fg_S\in\mathcal M^{\mathrm{met,gr}}_{U,h}(S)$, let
\[
P=\operatorname{Isom}_S(U\otimes\mathcal O_S,(\fg_S)_1)
\longrightarrow S
\]
be its right $G$-torsor of degree-one frames.  On $P$ the tautological
frame gives a surjection from the free metabelian Lie algebra
$M_P(U\otimes\mathcal O_P)$ onto $\fg_P$.  It is the identity in degree
one.  Hence its kernel is a homogeneous
$\Sym_{\mathcal O_P}(U\otimes\mathcal O_P)$-submodule
$N_P\subset B(U)\otimes\mathcal O_P$.  Since $\fg_P$ is generated in
degree one,
\[
B(U)\otimes\mathcal O_P,/N_P\simeq\fg_P'
=\bigoplus_{r\ge0}(\fg_P)_{r+2}.
\]
The pieces on the right are locally free of ranks $h_r$; consequently
each $(N_P)_r$ is a direct summand of $B(U)_r\otimes\mathcal O_P$.
Thus $N_P$ defines a map $P\to Q_h$.  Changing the tautological frame
changes the quotient by the displayed $G$-action, so this map is
$G$-equivariant.

Conversely, an $S$-point of $[Q_h/G]$ is a right $G$-torsor $P\to S$
together with a $G$-equivariant map $P\to Q_h$.  Pulling back the
universal quotient gives a $G$-equivariant exact sequence
\[
0\longrightarrow N_P\longrightarrow B(U)\otimes\mathcal O_P
\longrightarrow C_P\longrightarrow0
\]
whose graded pieces $C_{P,r}$ are locally free of ranks $h_r$.  Formation
produces the $G$-equivariant graded Lie algebra
\[
M_P(U\otimes\mathcal O_P)/N_P.
\]
Its degree-one part is $U\otimes\mathcal O_P$, and its derived algebra is
$C_P$.  The bracket and grading descend fppf-locally along $P\to S$,
giving an object of $\mathcal M^{\mathrm{met,gr}}_{U,h}(S)$ whose
degree-one bundle is $P\times^G U$.

Both constructions commute with base change: the free metabelian and
Koszul constructions do so degreewise, and the kernels above are kernels
of locally split quotients.  They are inverse on the frame torsor, hence
inverse after fppf descent.  The same argument applies to morphisms: a
graded Lie isomorphism is determined by its degree-one restriction, and
after framing it preserves precisely the corresponding kernel in $B(U)$.
This proves the equivalence of stacks.  If $S=\operatorname{Spec}A$ with
$A$ local, the degree-one bundle is free; choosing a frame gives the
stated action groupoid.
\end{proof}

\begin{proposition}[Quot tangent and obstruction spaces]
\label{prop:quot-tangent}
Let $q\in Q_h(\kk)$ correspond to
\[
0\longrightarrow N\longrightarrow B(U)
\longrightarrow C\longrightarrow0.
\]
Then
\[
T_qQ_h\simeq\Hom_R(N,C)_0,
\]
and the first-order tangent complex of $[Q_h/\GL(U)]$ at $q$ is
\begin{equation}\label{eq:quot-tangent-complex}
\left[
\mathfrak{gl}(U)\xrightarrow{\mathrm d\operatorname{orb}_q}
\Hom_R(N,C)_0
\right],
\end{equation}
with $\mathfrak{gl}(U)$ in degree $-1$.  For every small extension
$A'\twoheadrightarrow A$ with square-zero kernel $J$ and every quotient
$q_A$ with kernel $N_A$ and target $C_A$, the standard Quot obstruction
class lies in
\[
\Ext^1_{R_A}(N_A,C_A\otimes_AJ)_0.
\]
It vanishes if and only if $q_A$ lifts to $A'$.
For a small extension of local Artin $\kk$-algebras based at $q$, the
fixed special-fiber obstruction space is
$\Ext^1_R(N,C)_0\otimes_\kk J$.
With the action convention above, if $a\in\mathfrak{gl}(U)$ and
$\widetilde a$ is its induced infinitesimal action on $B(U)$, then
\begin{equation}\label{eq:orbit-derivative}
(\mathrm d\operatorname{orb}_q a)(n)=q(\widetilde a n)
\qquad(n\in N).
\end{equation}
\end{proposition}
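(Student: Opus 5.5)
I would prove this by treating $Q_h$ as an incidence subscheme of a product of Grassmannians and running the standard Quot-scheme deformation argument one graded piece at a time. The statement has four parts: the tangent space, the tangent complex, the obstruction class, and the orbit-derivative formula. I take them in that order.

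\textbf{Tangent space.} A $\kk[\varepsilon]$-point of $Q_h$ over $q$ is a homogeneous $R[\varepsilon]$-submodule $N_\varepsilon\subset B(U)[\varepsilon]$ whose graded quotients are free of rank $h_r$ and which reduces to $N$. For each fixed degree $r$, the Grassmannian description identifies the flat lifts of $N_r$ with $\Hom_\kk(N_r,C_r)$. Concretely,
\[
N_\varepsilon=\{\,n+\varepsilon m : n\in N,\ m\in B(U),\ q(m)=\varphi(n)\,\}.
\]
The incidence condition $UN_{\varepsilon,r}\subseteq N_{\varepsilon,r+1}$ can be checked generator by generator. For $u\in U$, the element $u(n+\varepsilon m)=un+\varepsilon um$ lies in $N_\varepsilon$ exactly when $q(um)=\varphi(un)$. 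Since $q(um)=u\,q(m)=u\varphi(n)$, this is the condition $\varphi(un)=u\varphi(n)$. So the lifts are exactly the degree-zero $R$-linear maps, which gives $T_qQ_h\simeq\Hom_R(N,C)_0$.

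\textbf{Tangent complex and orbit derivative.} Theorem~\ref{thm:quot-stack} presents the stack as the quotient $[Q_h/\GL(U)]$ by a smooth group. For such a quotient, the tangent complex at $q$ is the differential of the orbit map $\mathfrak{gl}(U)\to T_qQ_h$, placed in degrees $-1$ and $0$. To compute that differential, let $a\in\mathfrak{gl}(U)$ and consider the curve $1+\varepsilon a$. The action is $a\cdot q=q\circ\widetilde a^{-1}$, so $(1+\varepsilon a)\cdot q$ has kernel $(1+\varepsilon\widetilde a)N$. Its elements are $n+\varepsilon\widetilde a n$, so the recipe above gives $\varphi(n)=q(\widetilde a n)$, which is \eqref{eq:orbit-derivative}. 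This $\varphi$ is $R$-linear because $\widetilde a$ is a derivation for the module structure: $\widetilde a(un)=a(u)n+u\widetilde a(n)$, and $a(u)n\in N$ is killed by $q$.

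\textbf{Obstructions.} This is the main step, and I would follow the standard Quot argument (Hartshorne, \emph{Deformation Theory}, Thm.~6.4), adapted to graded modules over $R_A$. Take a small extension with kernel $J$ and the quotient $0\to N_A\to B_A\to C_A\to 0$.
\begin{enumerate}[label=\textup{(\arabic*)}]
\item Present $N_A$ as a graded module, $0\to K\to F\to N_A\to0$, with $F$ graded free over $R_A$.
\item Lift the composite $F\to B_A$ to a map $\tilde f:F'\to B_{A'}$, where $F'$ is the free $R_{A'}$-module on the same generators.
\item The failure of the lifted relations $K$ to map into $\im\tilde f$ modulo $J$ is measured by a degree-zero homomorphism $K\to C_A\otimes_AJ$. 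It lands in $C_A\otimes_A J$ because $B_A\otimes_A J$ surjects onto $C_A\otimes_A J$, and elements of $N_A\otimes_A J$ can be absorbed by changing the lift.
\item Changing the lift of generators alters this homomorphism by a map restricted from $F$. Hence the class in $\Ext^1_{R_A}(N_A,C_A\otimes_AJ)_0$ is well defined.
\item This class vanishes exactly when a lifted submodule $N_{A'}$ exists.
\end{enumerate}
Flatness of the lifted quotient is automatic degreewise, because each $(B_{A'})_r$ is finite free and $(C_A)_r$ is projective. Finally, for Artin $A$ with residue field $\kk$ and $\mathfrak m_{A'}J=0$, base change gives $C_A\otimes_AJ=C\otimes_\kk J$. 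Homs out of $N_A$ into this module factor through $N$, so the obstruction space reduces to $\Ext^1_R(N,C)_0\otimes_\kk J$.

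The main obstacle is this last step. One has to check that the obstruction is well defined independently of the choices, and that the graded pieces, which are finite, suffice for the $\Ext$ computation even though $B(U)$ is infinite. I would handle the finiteness through the truncation at degree $s+1$: in degrees above $s$ one has $N_r=B(U)_r$ and $C_r=0$, so nothing happens there.
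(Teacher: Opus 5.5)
Your proposal is correct and follows essentially the paper's route. The tangent space comes from graph maps of flat lifts, with $R$-linearity forced by the incidence condition. The orbit derivative comes from the moved kernel $(1+\epsilon\widetilde a)N$. The obstruction uses the standard Quot argument, which you spell out in more detail than the paper does.

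Two small corrections to your obstruction sketch. First, flatness of $B_{A'}/N_{A'}$ does not follow from freeness of $(B_{A'})_r$. It is exactly the condition $N_{A'}\cap JB_{A'}=N_A\otimes_AJ$, which your corrected lift secures; the local flatness criterion then gives flatness. Second, the passage to $\Ext^1_R(N,C)_0\otimes_\kk J$ needs $A$-flatness of $N_A$, so that your presentation of $N_A$ reduces to a presentation of $N$. Your remark about Homs factoring through $N$ covers only the $\Hom$ term.
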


\begin{proof}
Let $A_\epsilon=\kk[\epsilon]/(\epsilon^2)$ and choose a degreewise
splitting $B(U)=N\oplus S$ of $q$.  A direct summand
$N_\epsilon\subset B(U)\otimes A_\epsilon$ reducing to $N$ is the graph
of a degree-zero map $N\to S\simeq C$.  It is an
$R\otimes A_\epsilon$-submodule exactly when its class
$\eta:N\to C$ satisfies
$\eta(rn)=r\eta(n)$ for $r\in R$ and $n\in N$.  Changing the splitting
changes a lift of $\eta$, but not its class in $\Hom_R(N,C)_0$.  This
gives $T_qQ_h=\Hom_R(N,C)_0$.

The tangent complex of an action quotient is the infinitesimal action map
into this tangent space.
For $1+\epsilon a\in\GL(U)(\kk[\epsilon]/(\epsilon^2))$, the moved kernel
is $(1+\epsilon\widetilde a)N$; its graph map is
$n\mapsto q(\widetilde a n)$, proving
\eqref{eq:orbit-derivative}.  Standard Quot deformation theory gives the
displayed relative $\Ext^1$ obstruction.  For a small Artin extension,
$J$ is a $\kk$-vector space through the residue map and passage to the
special fiber gives the stated fixed obstruction space.  Quotienting by
the smooth group $\GL(U)$ adds no obstruction.
\end{proof}

For a graded Lie algebra, write $H^q_0$ for the degree-zero part of its
adjoint Chevalley--Eilenberg cohomology.

The tangent cohomology of $k$-solvable law schemes is described in
\cite[Appendix]{BarrionuevoTiraoSulca2023}.  The next theorem identifies
its graded metabelian restriction with the graded Quot-stack tangent.

\begin{theorem}[general metabelian defect and Quot tangent]
\label{thm:general-defect}
Let $\fg=\bigoplus_{i\ge1}L_i$ be a finite-dimensional positively graded
metabelian Lie algebra generated by $L_1$, and put
$\mathfrak d=\fg'=\bigoplus_{i\ge2}L_i$.  Restriction induces a
well-defined map
\[
\Delta_{\fg}:H^2_0(\fg;\fg)
\longrightarrow\Hom(\Lambda^2\mathfrak d,\fg)_0,
\qquad
\Delta_{\fg}[\phi]=\phi|_{\Lambda^2\mathfrak d}.
\]
Its kernel is the tangent space to graded metabelian deformations modulo
infinitesimal graded equivalence:
\begin{equation}\label{eq:general-metabelian-tangent}
\ker\Delta_{\fg}
=\frac{Z^2_0(\fg;\fg)\cap
\ker(\operatorname{res}_{\Lambda^2\mathfrak d})}
{B^2_0(\fg;\fg)}.
\end{equation}
If $\fg$ corresponds to $B(U)\twoheadrightarrow C$ with kernel $N$, then
there is a natural identification
\begin{equation}\label{eq:defect-quot-identification}
\ker\Delta_{\fg}\simeq
H^0\!\left(T_q[Q_h/\GL(U)]\right)\simeq
\operatorname{coker}\left(
\mathfrak{gl}(U)\xrightarrow{\mathrm d\operatorname{orb}}
\Hom_R(N,C)_0
\right).
\end{equation}
Moreover,
$H^{-1}(T_q[Q_h/\GL(U)])\simeq\Der_0(\fg)$.
\end{theorem}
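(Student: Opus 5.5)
We first check that $\Delta_{\fg}$ is well defined, then identify the restricted cocycles with the Quot tangent, and finally compare the two quotients. For a degree-zero coboundary $\phi=\delta f$, with $f\in\Hom(\fg,\fg)_0$, we have $\phi(x,y)=[f x,y]+[x,f y]-f[x,y]$. If $x,y\in\mathfrak d$, then $[x,y]=0$. Because $f$ preserves degree, $f(\mathfrak d)\subset\mathfrak d$, so both remaining brackets vanish since $\mathfrak d$ is abelian. Hence $B^2_0$ restricts to zero on $\Lambda^2\mathfrak d$, and restriction descends to $H^2_0$. This gives $\Delta_{\fg}$ and the description \eqref{eq:general-metabelian-tangent} of its kernel.

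Next we interpret degree-zero cocycles as first-order deformations. A cocycle $\phi\in Z^2_0$ defines a graded Lie bracket $[\,,\,]+\epsilon\phi$ on $\fg\otimes\kk[\epsilon]$. It preserves degrees, so $\fg_\epsilon$ is still positively graded and, by Nakayama, generated in degree one. Its derived algebra is $\mathfrak d\otimes\kk[\epsilon]$, because graded pieces of degree $\ge2$ are spanned by brackets. Metabelianity of $\fg_\epsilon$ means $[\mathfrak d_\epsilon,\mathfrak d_\epsilon]_\epsilon=0$. Since $\mathfrak d$ is abelian at order zero, this reduces to $\phi|_{\Lambda^2\mathfrak d}=0$. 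So $Z^2_0\cap\ker\operatorname{res}$ is exactly the set of graded metabelian first-order deformations with fixed underlying graded space. Coboundaries $\delta f$ correspond to isomorphisms $1+\epsilon f$ that reduce to the identity.

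By Theorem~\ref{thm:quot-stack}, applied over $A=\kk[\epsilon]$, such a deformation together with the fixed frame $U\otimes A\simeq(\fg_\epsilon)_1$ is a point of $Q_h(A)$ lifting $q$. By Proposition~\ref{prop:quot-tangent} it therefore has a class $\eta\in\Hom_R(N,C)_0$. Concretely, $\eta$ is obtained as follows. Lift $n\in N$, written as an iterated bracket of degree-one elements in $M(U)$, to $\fg_\epsilon$ using the deformed bracket. Its evaluation is $\epsilon\,\eta(n)$ with $\eta(n)\in C=\mathfrak d$. This formula also shows that $\phi|_{L_1\otimes\fg}$ determines $\eta$.

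We then define a map $\Psi:Z^2_0\cap\ker\operatorname{res}\to\Hom_R(N,C)_0$ and compare kernels. For $\phi=\delta f$ with $f$ graded, the deformation is the trivial one transported by $1+\epsilon f$. Its frame is moved by $f|_{L_1}=a\in\mathfrak{gl}(U)$, and the cocycle $\phi-\delta f'$ for $f'$ any graded map vanishing on $L_1$ gives the same point of $Q_h$. We will check, via \eqref{eq:orbit-derivative}, that $\Psi(\delta f)=\mathrm d\operatorname{orb}_q(f|_{L_1})$. This shows $\Psi$ descends to a map from $\ker\Delta_{\fg}$ to the cokernel.

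Surjectivity of this map follows from the equivalence of Theorem~\ref{thm:quot-stack} on $A_\epsilon$-points. Injectivity comes from the fact that two framed deformations giving the same Quot point are equal as Lie algebras, by full faithfulness, and differ by a graded isomorphism $1+\epsilon f$. Equivalently, the groupoid statement gives $\pi_0$ of both sides on first-order objects. The same groupoid comparison identifies automorphisms. Infinitesimal automorphisms of $q$ in the action groupoid are $\ker\mathrm d\operatorname{orb}_q$, namely those $a\in\mathfrak{gl}(U)$ whose induced $\widetilde a$ preserves $N$. These are exactly the maps that extend to graded derivations of $M(U)/N=\fg$. Since a derivation is determined by its values on $L_1$, we get $H^{-1}\simeq\Der_0(\fg)$.

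The main obstacle is showing that $\Psi$ is well defined and linear at the cochain level, independently of the lifts chosen. This requires checking that $\Psi$ agrees with the graph-map description in the proof of Proposition~\ref{prop:quot-tangent}, in particular that $\eta$ is $R$-linear. I will try to verify $R$-linearity directly: $u\cdot n$ deforms to $[u,\tilde n]_\epsilon=[u,\tilde n]+\epsilon\phi(u,\tilde n)$, and the cocycle identity, together with $\phi|_{\Lambda^2\mathfrak d}=0$, should make the correction terms $R$-compatible after passing to the quotient by the orbit image.
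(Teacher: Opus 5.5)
Your proposal is correct, and its overall architecture matches the paper's. Coboundaries restrict to zero on $\Lambda^2\mathfrak d$, and the linearized metabelian identity reduces to $\phi|_{\Lambda^2\mathfrak d}=0$. Theorem~\ref{thm:quot-stack} over $A_\epsilon=\kk[\epsilon]/(\epsilon^2)$ gives surjectivity, full faithfulness gives injectivity, and the stabilizer of $q$ is $\Der_0(\fg)$.

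The genuine difference is the direction of the comparison map. The paper fixes a graded splitting $s$ of $q$ and constructs $\Theta:\Hom_R(N,C)_0\to\ker\Delta_{\fg}$. It transports the bracket of $(M(U)\otimes A_\epsilon)/N_\eta$ along an explicit trivialization $\rho_\eta$, producing the cocycle $\Phi_s(\eta)=-\eta\circ\kappa_s$. It then has to check that changing $s$ alters $\Phi_s(\eta)$ only by a coboundary, and its orbit comparison $\Phi_s(\eta_a)=-df_a$ is a derivation identity.

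You instead define $\Psi(\phi)(n)=\epsilon^{-1}\pi_\epsilon(n)$ using the universal map $\pi_\epsilon:M(U)\otimes A_\epsilon\to\fg_\epsilon$, which involves no choices. The orbit comparison then becomes transport of structure. For $g=1+\epsilon f$ one has $g\circ\pi_\epsilon=\pi\circ(1+\epsilon\widetilde a)$ with $a=f|_{L_1}$, so $\Psi(\delta f)(n)=q(\widetilde a n)=(\mathrm d\operatorname{orb}_q a)(n)$ on the nose. The paper's direction buys an explicit cocycle representative for each Quot tangent vector, which is the form used later for pencil motion. In your direction that trivialization appears only implicitly, when you realize a given $\eta$ as a framed deformation in the surjectivity step.

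The obstacle in your last paragraph is not real, and the proposed fix is slightly off. Once $d\phi=0$ and $\phi|_{\Lambda^2\mathfrak d}=0$ make $\fg_\epsilon$ a metabelian $A_\epsilon$-Lie algebra, $\pi_\epsilon$ exists and is a Lie homomorphism. Hence $\pi_\epsilon(un)=[u,\epsilon\,\eta(n)]_\epsilon=\epsilon\,u\,\eta(n)$, because $\epsilon^2=0$. This gives three things at once:
\begin{itemize}
\item $\eta$ is $R$-linear exactly. ``$R$-compatible modulo the orbit image'' would not even define an element of $\Hom_R(N,C)_0$.
\item $\eta$ is independent of how $n$ is written as iterated brackets.
\item $\eta$ is linear in $\phi$, by first-order expansion.
\end{itemize}
Finally, $\ker\pi_\epsilon$ is the graph of $-\Psi(\phi)$ in the parametrization of Proposition~\ref{prop:quot-tangent}; this sign is harmless.
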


\begin{proof}
Put $B=B(U)$, let $q:B\twoheadrightarrow C$ have kernel $N$, and identify
$\fg=M(U)/N=U\oplus C$.  A degree-zero $2$-cochain sends
$L_i\wedge L_j$ to $L_{i+j}$ and therefore takes values in
$\mathfrak d=\bigoplus_{i\ge2}L_i$.  If
$\mu_\epsilon=\mu+\epsilon\phi$, the coefficient of $\epsilon$ in
$\mu_\epsilon(\mu_\epsilon(x,y),\mu_\epsilon(z,w))$ is
\begin{equation}\label{eq:linearized-metabelian-general}
\phi([x,y],[z,w])+[\phi(x,y),[z,w]]
+[[x,y],\phi(z,w)]=0.
\end{equation}
The last two terms vanish since their entries lie in the abelian ideal
$\mathfrak d$.  As $\mathfrak d$ is spanned by commutators, the
linearized metabelian identity is equivalent to
$\phi|_{\Lambda^2\mathfrak d}=0$; the first-order Jacobi identity is
$d\phi=0$.

If $f\in C^1_0(\fg;\fg)$ and $a,b\in\mathfrak d$, then
$f(a),f(b)\in\mathfrak d$ and
\[
(df)(a,b)=[a,f(b)]-[b,f(a)]-f([a,b])=0.
\]
Thus restriction kills every weight-zero coboundary, proving
\eqref{eq:general-metabelian-tangent}.

We next construct the comparison with the Quot tangent.  Choose a graded
$\kk$-linear splitting $s:C\to B$ of $q$, put
$p_N=1-sq:B\to N$, extend $p_N$ by zero on $U$, and write
\[
\sigma=\operatorname{id}_U\oplus s:\fg=U\oplus C\longrightarrow M(U),
\qquad \pi:M(U)\longrightarrow\fg.
\]
Let $A_\epsilon=\kk[\epsilon]/(\epsilon^2)$.  For
$\eta\in\Hom_R(N,C)_0$, define $N_\eta\subset B\otimes A_\epsilon$ as
the image of
\[
N\otimes A_\epsilon\longrightarrow B\otimes A_\epsilon,
\qquad
n+\epsilon n'\longmapsto n+\epsilon\bigl(n'+s\eta(n)\bigr).
\]
This is degreewise a direct summand.  For $r\in R$ and $n\in N$, the
difference between $r(n+\epsilon s\eta(n))$ and
$rn+\epsilon s\eta(rn)$ lies in $\epsilon N$ exactly when
$r\eta(n)=\eta(rn)$.  Thus $N_\eta$ is an
$R\otimes A_\epsilon$-submodule exactly when $\eta$ is $R$-linear.

There is an $A_\epsilon$-linear isomorphism
\[
\rho_\eta:(M(U)\otimes A_\epsilon)/N_\eta
\xrightarrow{\sim}\fg\otimes A_\epsilon
\]
given by
\[
\rho_\eta(\overline{m+\epsilon m'})
=\pi(m)+\epsilon\bigl(\pi(m')-\eta(p_Nm)\bigr).
\]
For $x,y\in\fg$, set
\[
\kappa_s(x,y)=p_N[\sigma x,\sigma y]_{M(U)}
=[\sigma x,\sigma y]_{M(U)}-\sigma[x,y]_{\fg}\in N.
\]
Transporting the quotient bracket by $\rho_\eta$ gives
\begin{equation}\label{eq:quot-cocycle-map}
[x,y]_\eta=[x,y]_{\fg}+\epsilon\Phi_s(\eta)(x,y),
\qquad
\Phi_s(\eta)(x,y)=-\eta\bigl(\kappa_s(x,y)\bigr).
\end{equation}
Consequently $\Phi_s(\eta)$ is a degree-zero cocycle and vanishes on
$\Lambda^2\mathfrak d$, since $\sigma(\mathfrak d)\subset B$ and $B$ is
abelian.  Replacing $s$ changes $\rho_\eta$ by an infinitesimal graded
change of basis and hence changes $\Phi_s(\eta)$ by a coboundary.  We
obtain a natural map
\[
\Theta:\Hom_R(N,C)_0\longrightarrow\ker\Delta_{\fg},
\qquad \eta\longmapsto[\Phi_s(\eta)].
\]

For $a\in\mathfrak{gl}(U)$, let $\widetilde a$ be the degree-zero
derivation of $M(U)$ induced by $a$, put
$\eta_a=\mathrm d\operatorname{orb}_q(a)$ as in
\eqref{eq:orbit-derivative}, and set
$f_a=\pi\widetilde a\sigma\in C^1_0(\fg;\fg)$.  Since $\widetilde a$ is a
derivation,
\[
(df_a)(x,y)=\pi\widetilde a\bigl(\kappa_s(x,y)\bigr)
=\eta_a\bigl(\kappa_s(x,y)\bigr),
\]
and therefore $\Phi_s(\eta_a)=-df_a$.  Thus $\Theta$ induces
\[
\overline\Theta:\operatorname{coker}\bigl(
\mathfrak{gl}(U)\xrightarrow{\mathrm d\operatorname{orb}_q}
\Hom_R(N,C)_0\bigr)\longrightarrow\ker\Delta_{\fg}.
\]

Conversely, let
$\phi\in Z^2_0(\fg;\fg)$ vanish on
$\Lambda^2\mathfrak d$.  Equation
\eqref{eq:linearized-metabelian-general} shows that
$\mu+\epsilon\phi$ is metabelian.  It remains generated by $U_A$: the
quotient by the graded subalgebra generated by $U_A$ has zero special
fiber and therefore vanishes degreewise by Nakayama.  Its components in
degrees at least two are the fixed free $A$-modules $L_i\otimes A$, so
the derived ranks remain $h$.  With its given degree-one frame,
Theorem~\ref{thm:quot-stack}, with the canonical frame of
$U\otimes A_\epsilon$, produces a quotient deformation of $q$, and the
construction above recovers $[\phi]$.  Hence $\overline\Theta$ is
surjective.  If $\Theta(\eta)=0$, the transported Lie deformation is
infinitesimally isomorphic to the constant one.  By full faithfulness in
Theorem~\ref{thm:quot-stack}, such an isomorphism is determined by its
degree-one component $1+\epsilon a$; after the target isomorphism already
built into the Quot functor, this says precisely that
$\eta=\mathrm d\operatorname{orb}_q(a)$.  Hence $\overline\Theta$ is
injective, proving \eqref{eq:defect-quot-identification}.

Finally,
\[
\ker(\mathrm d\operatorname{orb}_q)
=\{a\in\mathfrak{gl}(U):\widetilde a(N)\subset N\}.
\]
Such an $a$ descends to a degree-zero derivation of $\fg$.  Conversely,
every degree-zero derivation of $\fg$ is determined by its restriction to
$U=\fg_1$; the universal property of $M(U)$ then shows that its
restriction preserves $N$.  This identifies the displayed kernel with
$\Der_0(\fg)$ and proves the final assertion.
\end{proof}

\begin{corollary}[tangent dimension and unobstructedness criterion]
\label{cor:metabelian-smoothness}
In the notation of Theorem~\ref{thm:general-defect},
\[
\dim\ker\Delta_{\fg}
=\dim\Hom_R(N,C)_0-(\dim U)^2+\dim\Der_0(\fg).
\]
If $\Ext^1_R(N,C)_0=0$, then $Q_h$ is smooth at $q$ and the graded
metabelian deformation stack is formally smooth at $[\fg]$; equivalently,
every deformation in $\mathcal M^{\mathrm{met,gr}}_{U,h}$ based at $\fg$
lifts across every small extension of local Artin $\kk$-algebras.
\end{corollary}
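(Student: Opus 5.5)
The dimension formula follows from the exact sequence attached to the two-term tangent complex \eqref{eq:quot-tangent-complex}. Theorem~\ref{thm:general-defect} identifies the kernel and cokernel of $\mathrm d\operatorname{orb}_q$, so we have an exact sequence of finite-dimensional vector spaces
\[
0\longrightarrow\Der_0(\fg)\longrightarrow\mathfrak{gl}(U)
\xrightarrow{\mathrm d\operatorname{orb}_q}\Hom_R(N,C)_0
\longrightarrow\ker\Delta_{\fg}\longrightarrow0 .
\]
The alternating sum of dimensions vanishes. Since $\dim\mathfrak{gl}(U)=(\dim U)^2$, this gives
\[
\dim\ker\Delta_{\fg}=\dim\Hom_R(N,C)_0-(\dim U)^2+\dim\Der_0(\fg).
\]
All the spaces involved are finite-dimensional. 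Indeed, $N$ is finitely generated over the Noetherian ring $R$, and $C$ is finite-dimensional, so $\Hom_R(N,C)_0$ embeds into $\Hom_\kk(V,C)$ for a finite generating space $V$ of $N$.

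For smoothness, we apply Proposition~\ref{prop:quot-tangent}. Consider a small extension $A'\twoheadrightarrow A$ of local Artin $\kk$-algebras with kernel $J$, and a quotient $q_A$ based at $q$. Its obstruction class lies in $\Ext^1_R(N,C)_0\otimes_\kk J$, which is zero by hypothesis, so $q_A$ lifts to $A'$. By the infinitesimal lifting criterion, the complete local ring of $Q_h$ at $q$ is formally smooth. Since $Q_h$ is of finite type over $\kk$ (it is a closed subscheme of a product of Grassmannians), $Q_h$ is smooth at $q$.

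Next we pass to the stack. By Theorem~\ref{thm:quot-stack}, an object of $\mathcal M^{\mathrm{met,gr}}_{U,h}$ over a local Artin $A$ deforming $\fg$ has free degree-one part. It is therefore isomorphic to the formation algebra of some $q_A\in Q_h(A)$ reducing to $q$, possibly after composing with an element of $\GL(U)(\kk)$, which we absorb by changing the base point within the orbit. Lifting $q_A$ to $q_{A'}$ and forming $M(U)\otimes A'/N_{A'}$ then gives the required lift of the Lie algebra. Any chosen frame over $A$ lifts to $A'$ because $\GL(U)$ is smooth, so the frame presents no obstruction. This is the content of the remark in Proposition~\ref{prop:quot-tangent} that the quotient by a smooth group adds no obstruction.

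The main obstacle is bookkeeping rather than substance. We must check that liftability of objects of the quotient stack reduces to liftability of points of $Q_h$: choose a frame over $A$, lift the point, and keep the lifted frame. We must also make sure that the stated equivalence ("equivalently, every deformation lifts") is read as the definition of formal smoothness of the stack at $[\fg]$.
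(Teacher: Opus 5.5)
Your proposal is correct and follows the paper's proof. The dimension formula is the Euler characteristic of the two-term complex \eqref{eq:quot-tangent-complex}, using $H^{-1}\simeq\Der_0(\fg)$ and $H^0\simeq\ker\Delta_{\fg}$. Smoothness comes from the vanishing of the Quot obstruction space, the finite type of $Q_h$, and the smoothness of $\GL(U)$; your frame-lifting argument simply unpacks the paper's remark that the atlas $Q_h\to[Q_h/\GL(U)]$ is smooth. One small point: you do not need to change the base point within the orbit, since by Nakayama the given frame of the special fiber lifts over local $A$, so $q_A$ can be chosen to reduce exactly to $q$.
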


\begin{proof}
Taking Euler characteristics in the two-term complex
\eqref{eq:quot-tangent-complex} and using
$H^{-1}\simeq\Der_0(\fg)$ and
$H^0\simeq\ker\Delta_{\fg}$ gives the dimension formula.  If
$\Ext^1_R(N,C)_0=0$, the Quot obstruction theory in
Proposition~\ref{prop:quot-tangent} lifts every deformation based at $q$
across every small extension.  Thus $Q_h$ is formally smooth at $q$;
since it is of finite type over $\kk$, it is smooth there.  The atlas
$Q_h\to[Q_h/\GL(U)]$ is smooth because $\GL(U)$ is smooth, so the quotient
stack is formally smooth at $[\fg]$.
\end{proof}

\section{Intrinsic recovery and level quotients}
\label{sec:level-recovery}

Our second structural result recovers each inverse-system component from
the Lie algebra.  Write $W_r$ for divided-power degree $r$.

\begin{definition}
A finite-dimensional inverse-system module $W$ is \emph{level of degree
$d$} if $W_d\ne0$, $W_r=0$ for $r>d$, and
\[
W=R\circ W_d.
\]
\end{definition}

\begin{lemma}\label{lem:level-socle}
Let $B_W=\fg_W'$.  The module $W=B_W^\circ$ is level of degree $d$ if and
only if the $R$-socle of $B_W$ is concentrated in internal degree $d$.  In
that case
\[
\Soc_R(B_W)=(B_W)_d.
\]
\end{lemma}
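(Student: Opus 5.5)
The plan is to run the standard Macaulay-type duality argument through the degreewise perfect pairing $B_W\times W\to\kk$, with $B_W=B(U)/W^\perp$. Throughout, the pairing satisfies $\langle pb,\Phi\rangle=\langle b,p\circ\Phi\rangle$ for $p\in R$. Everything is graded and finite dimensional, so all duals are taken degreewise.

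\emph{Step 1: dualize the socle.} Let $\mathfrak m=R_+=UR$. I will identify $\Soc_R(B_W)=\{b:Ub=0\}$, taken degree by degree, with the annihilator of $U\circ W$. Concretely, $b\in(B_W)_r$ is killed by $U$ if and only if $\langle ub,\Phi\rangle=0$ for all $u\in U$ and $\Phi\in W_{r+1}$. By the adjunction this says $\langle b,u\circ\Phi\rangle=0$, that is, $b\perp(U\circ W)_r$. Because the pairing $(B_W)_r\times W_r\to\kk$ is perfect (Proposition~\ref{prop:relative-inverse-systems} over $A=\kk$, or the duality recalled before Definition~\ref{def:formation}), this yields
\[
\dim\Soc_R(B_W)_r=\dim W_r-\dim(U\circ W_{r+1}).
\]
Equivalently, $\Soc_R(B_W)_r$ is dual to the space $(W/\mathfrak m\circ W)_r$ of minimal generators of $W$ in degree $r$.

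\emph{Step 2: translate levelness.} Since $\mathfrak m\circ W\subseteq W$ and contraction lowers degree, graded Nakayama applies: $W$ is generated by any graded complement of $\mathfrak m\circ W$. So $W=R\circ W_d$ with $W_r=0$ for $r>d$ holds exactly when $(W/\mathfrak m\circ W)_r=0$ for all $r\neq d$. One direction is immediate: $R\circ W_d$ meets degree $r<d$ only in $\mathfrak m\circ W$. For the converse, induct downward on degree: if every $W_r$ with $r<d$ lies in $U\circ W_{r+1}$, then $W_r=R_{d-r}\circ W_d$. Also, $W_r=0$ for $r>d$ together with $W_d\neq0$ forces $d$ to be the top degree, and in the top degree the socle is all of $(B_W)_d$. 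By Step 1, the condition $(W/\mathfrak m\circ W)_r=0$ for $r\ne d$ is the same as socle concentration in degree $d$. A nonzero finite-length module always has nonzero socle in its top degree, so concentration in degree $d$ forces $d=\max\{r:W_r\neq0\}$ and $W_d\ne0$. Finally, $\Soc_R(B_W)_d=(B_W)_d$ because $U(B_W)_d\subseteq(B_W)_{d+1}=0$.

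\emph{Main obstacle.} The only delicate point is that Nakayama and the socle/generator duality are used for graded modules that are finite dimensional but need not be cyclic. This is handled by working degreewise with perfect pairings, as above, and by noting that $W$ has finite support, so the downward induction terminates. I would also check the degenerate case $W=0$ separately: there the statement is vacuous, since level requires $W_d\ne0$.
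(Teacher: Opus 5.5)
Your argument is correct and is the paper's argument: both identify the socle in degree $r$ with the kernel of $(B_W)_r\to\Hom(U,(B_W)_{r+1})$, dualize it to the cokernel of $U\otimes W_{r+1}\to W_r$ (your minimal-generator space $(W/\mathfrak m\circ W)_r$), and iterate downward to get $W=R\circ W_d$. You also make explicit a step the paper leaves implicit, namely that nonzero socle in the top degree forces $d=\max\{r:W_r\neq0\}$. One correction: for $W=0$ the equivalence is not vacuous but false, unless ``concentrated in degree $d$'' is read as requiring $\Soc_R(B_W)_d\neq0$.
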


\begin{proof}
For $r<d$, absence of socle in degree $r$ is the injectivity of
\[
(B_W)_r\longrightarrow\Hom(U,(B_W)_{r+1}),\qquad
b\longmapsto(u\mapsto ub).
\]
Its dual is the surjectivity of
\[
U\otimes W_{r+1}\longrightarrow W_r.
\]
Iterating gives $W=R\circ W_d$.  The converse is the same argument in
reverse.
\end{proof}

Put $V=\fg_W/\gamma_2(\fg_W)$.  Since $B_W$ is generated in internal
degree zero,
\[
\gamma_j(\fg_W)=\bigoplus_{i\ge j}(\fg_W)_i\qquad(j\ge2).
\]
For $r\ge0$, $p=u_1\cdots u_r\in\Sym^rV$, and $v,w\in V$, choose lifts to
$\fg_W$ and define
\begin{equation}\label{eq:degreewise-tensor}
T_{\fg_W}^{(r)}(p\otimes(v\wedge w))
=\ad_{u_r}\cdots\ad_{u_1}[v,w]
\pmod{\gamma_{r+3}(\fg_W)}.
\end{equation}
For $r=0$ this means the bracket modulo $\gamma_3$.  If
$W\ne0$ and $d=\max\{r:W_r\ne0\}$, then
$\gamma_{d+3}(\fg_W)=0$ and the terminal map is
\begin{equation}\label{eq:terminal-tensor}
T_{\fg_W}:=T_{\fg_W}^{(d)}:
\Sym^dV\otimes\Lambda^2V\longrightarrow\gamma_{d+2}(\fg_W).
\end{equation}

\begin{proposition}[intrinsic degreewise recovery]
\label{prop:terminal-tensor}
Every $T_{\fg_W}^{(r)}$ is well defined and symmetric in its outer entries.
Under $V\simeq U$,
\begin{equation}\label{eq:degreewise-recovery}
\im\bigl((T_{\fg_W}^{(r)})^*\bigr)
=W_r\subset\Sym^rU^*\otimes\Lambda^2U^*.
\end{equation}
Consequently the full tuple of lower-central tensors recovers $W$.  If
$W\ne0$, then in particular
\begin{equation}\label{eq:terminal-recovery}
\im(T_{\fg_W}^*)=W_d.
\end{equation}
\end{proposition}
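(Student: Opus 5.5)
The plan is to reduce everything to the formation model $\fg_W=M(U)/W^\perp=U\oplus B_W$ with $B_W=B(U)/W^\perp$, and then read off each tensor degreewise through the Koszul presentation (Proposition~\ref{prop:koszul-presentation}) and the duality of Proposition~\ref{prop:relative-inverse-systems}.

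\emph{Well-definedness and symmetry.} Because $B_W$ is generated in internal degree zero, $\gamma_2(\fg_W)=B_W$ and $\gamma_j(\fg_W)=\bigoplus_{i\ge j}(\fg_W)_i$. Hence $V=\fg_W/\gamma_2=U$, and any lift of $u_i$ or $v,w$ differs from the degree-one representative by an element of $B_W$. Changing a lift of $v$ or $w$ by $b\in B_W$ alters $[v,w]$ by an element of $[\fg_W,B_W]\subset\gamma_3$. Each subsequent $\ad_{u_i}$ raises the filtration index by one, so after $r$ steps the change lies in $\gamma_{r+3}$. Changing a lift of $u_i$ by $b$ produces $\ad_b$ applied to an element of $\gamma_2$; this vanishes because $B_W$ is abelian, and the same holds at every later stage. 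So the class modulo $\gamma_{r+3}$ depends only on $p\otimes(v\wedge w)$. Symmetry in the outer entries, once the degree-one lifts are fixed, is the fact noted in the introduction: $[\ad_u,\ad_v]|_{\fg'}=\ad_{[u,v]}|_{\fg'}=0$, since $[u,v]\in\fg'$ and $\fg'$ is abelian. Multilinearity is clear, so the map factors through $\Sym^rV\otimes\Lambda^2V$.

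\emph{Identification of the tensor.} With degree-one lifts, $\gamma_{r+2}/\gamma_{r+3}=(\fg_W)_{r+2}=(B_W)_r$. Under this identification $T^{(r)}$ is the composite
\[
R_r\otimes\Lambda^2U\longrightarrow B(U)_r\longrightarrow(B_W)_r,
\]
namely the degree-$r$ part of the Koszul surjection followed by the quotient map; indeed $p\otimes(v\wedge w)\mapsto p\cdot[v,w]$ is exactly that surjection. Dualizing degreewise, the first arrow dualizes to the inclusion $B(U)_r^*=\cZ^2(U)_r\hookrightarrow R_r^*\otimes\Lambda^2U^*$, which is injective because the first arrow is surjective. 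The second arrow dualizes to $(B_W)_r^*=(W^\perp)_r^\perp=W_r\hookrightarrow B(U)_r^*$, by Proposition~\ref{prop:relative-inverse-systems} over $A=\kk$, or by the double-annihilator identities preceding Definition~\ref{def:formation}. Hence $\im\bigl((T^{(r)})^*\bigr)=W_r$, viewed inside $\Sym^rU^*\otimes\Lambda^2U^*$ through the identification $R_r^*=D_r\simeq\Sym^rU^*$ in characteristic zero. Taking all $r$ recovers $W=\bigoplus_rW_r$. For $r=d$, the quotient $\gamma_{d+2}/\gamma_{d+3}$ equals $\gamma_{d+2}$ since $\gamma_{d+3}=(B_W)_{\ge d+1}=0$, which gives \eqref{eq:terminal-recovery}.

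The main obstacle is the bookkeeping for lift-independence: one must verify that every correction term lands in $\gamma_{r+3}$ or vanishes outright by metabelianity. I would handle this by induction on $r$, using $[\gamma_i,\gamma_j]\subset\gamma_{i+j}$ together with $[\gamma_2,\gamma_2]=0$. A second point is the ordering convention $\ad_{u_r}\cdots\ad_{u_1}$ against the contraction action on $D$. Since the action is commutative, both conventions agree with the module structure, and the identification $D_r\simeq\Sym^rU^*$ only rescales by invertible factorials, so it does not change the image.
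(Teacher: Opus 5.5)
Your proposal is correct and follows essentially the same route as the paper. Lift-independence comes from $[\gamma_2,\gamma_2]=0$ and $[\gamma_2,\fg_W]\subset\gamma_3$, and symmetry from $\ad_{[u,v]}|_{\fg_W'}=0$. You then identify $T^{(r)}$ with the degree-$r$ Koszul surjection followed by $B(U)_r\twoheadrightarrow(B_W)_r$, whose dual image is $W_r$ by the double-annihilator identity. Your extra details only make explicit what the paper's brief proof leaves implicit: the dual of the Koszul surjection is an inclusion into $D_r\otimes\Lambda^2U^*$, and $\gamma_{d+3}=0$ in the terminal case.
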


\begin{proof}
Changing an outer lift by an element of $\gamma_2$ has no effect because
$\fg_W'$ is abelian.  Changing either lift in the initial pair changes its
bracket by an element of $\gamma_3$; after $r$ further brackets the change
lies in $\gamma_{r+3}$.  Moreover
$[\ad_u,\ad_v]=\ad_{[u,v]}$ vanishes on $\fg_W'$, proving symmetry.

The universal iterated-commutator map
\[
\Sym^rU\otimes\Lambda^2U\longrightarrow B(U)_r
\]
passes to
\[
\Sym^rU\otimes\Lambda^2U\longrightarrow
(B_W)_r\simeq\gamma_{r+2}(\fg_W)/\gamma_{r+3}(\fg_W).
\]
Dualizing $B(U)_r\twoheadrightarrow(B_W)_r$ identifies its dual image with
$W_r$.
\end{proof}

\begin{corollary}[orbit classification without levelness]
\label{cor:all-orbits}
For arbitrary finite-dimensional graded submodules
$W\subset\cZ^2(U)$ and $W'\subset\cZ^2(U')$, the following are equivalent:
\begin{enumerate}[label=\textup{(\roman*)}]
\item $\fg_W$ and $\fg_{W'}$ are isomorphic as Lie algebras;
\item they are isomorphic as graded Lie algebras;
\item there is an isomorphism $P:U\to U'$ whose restricted-dual pullback
satisfies $P^*W'=W$.
\end{enumerate}
\end{corollary}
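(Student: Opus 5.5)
The plan is to prove the cycle (iii)$\Rightarrow$(ii)$\Rightarrow$(i)$\Rightarrow$(iii). Two of these implications are formal. For (iii)$\Rightarrow$(ii), we apply Theorem~\ref{thm:general-equivalence}. An isomorphism $P:U\to U'$ with $P^*W'=W$ is a morphism $(U,W)\to(U',W')$ in $\mathsf{InvMet}^{\mathrm{gr}}$. The functor $\mathfrak F$ then gives a graded isomorphism $\fg_W\simeq\fg_{W'}$. Concretely, the induced map $M(U)\to M(U')$ carries $W^\perp$ onto $W'^\perp$ by the double-annihilator identities. The implication (ii)$\Rightarrow$(i) is trivial.

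All the content lies in (i)$\Rightarrow$(iii), where a possibly non-graded isomorphism $\psi:\fg_W\to\fg_{W'}$ has to be converted into degree-one data. We will not try to gradify $\psi$ directly. Instead we use the intrinsic lower-central tensors of Proposition~\ref{prop:terminal-tensor}. Every abstract isomorphism preserves the lower central series, so $\psi$ induces isomorphisms
\[
\gamma_j(\fg_W)/\gamma_{j+1}(\fg_W)\to\gamma_j(\fg_{W'})/\gamma_{j+1}(\fg_{W'})
\]
for all $j$. In particular it induces $\bar\psi:V\to V'$ on $V=\fg_W/\gamma_2$. Through the identifications $V\simeq U$ and $V'\simeq U'$, this gives a linear isomorphism $P:U\to U'$.

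Next we check that the tensors are natural, namely
\[
\psi\circ T^{(r)}_{\fg_W}=T^{(r)}_{\fg_{W'}}\circ(\Sym^r P\otimes\Lambda^2P)
\]
modulo $\gamma_{r+3}$. This holds because $T^{(r)}$ is defined by iterated brackets of arbitrary lifts, the lifts may be chosen as $\psi$ applied to lifts, and well-definedness is part of Proposition~\ref{prop:terminal-tensor}. Dualizing, the map induced by $\psi$ on $\gamma_{r+2}/\gamma_{r+3}$ is an isomorphism. So the images of the dual tensors correspond under $(\Sym^rP\otimes\Lambda^2P)^*$, and \eqref{eq:degreewise-recovery} gives $P^*W'_r=W_r$ for every $r$.

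The main point requiring care is the meaning of the restricted-dual pullback. The pullback induced by $P$ on $D\otimes\Lambda^2U^*$ restricts on degree $r$ to exactly $(\Sym^rP\otimes\Lambda^2P)^*$ on $\Sym^rU'^*\otimes\Lambda^2U'^*$, once $W_r$ is viewed inside this space via the surjection $\Sym^rU\otimes\Lambda^2U\to B(U)_r$. We also have to check that these identifications match the embedding $W_r\subset\cZ^2(U)$; this follows from the Koszul presentation in Proposition~\ref{prop:koszul-presentation}. Two further remarks complete the argument. The identification $\gamma_{r+2}/\gamma_{r+3}\simeq(B_W)_r$ uses that $B_W$ is generated in internal degree zero, which the paper notes. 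The equality $\dim U=\dim U'$ follows automatically from $\bar\psi$. Taking the union over all $r$ then gives $P^*W'=W$, which is (iii).
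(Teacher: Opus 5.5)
Your proof is correct; it reaches the same conclusion as the paper through a slightly different decomposition.

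The paper proves (i)$\Rightarrow$(ii) by passing to associated graded algebras. An abstract isomorphism $\psi$ induces a graded isomorphism $\operatorname{gr}\psi$. Since $\gamma_j(\fg_W)=\bigoplus_{i\ge j}(\fg_W)_i$, each formation algebra is canonically isomorphic to its own associated graded. The paper then gets (ii)$\Leftrightarrow$(iii) in one step from the full faithfulness in Theorem~\ref{thm:general-equivalence}.

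You instead go from (i) directly to (iii) through the degreewise recovery of Proposition~\ref{prop:terminal-tensor}. The underlying mechanism is the same in both arguments. Your $P$ is the degree-one component of $\operatorname{gr}\psi$. Your naturality identity for $T^{(r)}$ says exactly that $\operatorname{gr}\psi$ respects the iterated brackets. So the real difference is which earlier result absorbs the bookkeeping.

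The paper's route is shorter and exhibits an explicit graded isomorphism. It leaves the compatibility of $P^*$ with the embedding $W_r\subset\Sym^rU^*\otimes\Lambda^2U^*$ implicit in the full-faithfulness proof.

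Your route never builds the graded isomorphism. It makes that compatibility explicit, and you check it correctly: the universal iterated-commutator map is the Koszul presentation in degree $r$. It also spells out the introduction's remark that the orbit criterion follows from the intrinsic lower-central tensors.
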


\begin{proof}
An abstract isomorphism preserves the lower-central filtration and induces
an isomorphism of associated graded Lie algebras.  The displayed formula
for $\gamma_j$ identifies each formation algebra with its associated
graded.  Thus \textup{(i)} implies \textup{(ii)}, while
Theorem~\ref{thm:general-equivalence} gives the equivalence of
\textup{(ii)} and \textup{(iii)}.
\end{proof}

\begin{corollary}[level-orbit classification]
\label{cor:level-orbits}
Let $W\subset\cZ^2(U)$ and $W'\subset\cZ^2(U')$ be level of the same
degree $d$.  Then $\fg_W$ and $\fg_{W'}$ are isomorphic, abstractly or
graded, if and only if there is an isomorphism $P:U\to U'$ such that
$P^*W'_d=W_d$.
\end{corollary}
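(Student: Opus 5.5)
The plan is to reduce to Corollary~\ref{cor:all-orbits}, so that only condition \textup{(iii)} has to be compared with the weaker condition $P^*W'_d=W_d$. By that corollary, $\fg_W\simeq\fg_{W'}$, abstractly or graded, holds exactly when some isomorphism $P:U\to U'$ satisfies $P^*W'=W$. What remains is to show that, for level modules of the same degree $d$, the equality $P^*W'=W$ is equivalent to the equality of terminal pieces $P^*W'_d=W_d$.

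One direction is immediate. The restricted-dual pullback $P^*$ preserves divided-power degree, so $P^*W'=W$ restricts in degree $d$ to $P^*W'_d=W_d$.

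For the converse, assume $P^*W'_d=W_d$ and use levelness, $W=R\circ W_d$ and $W'=R'\circ W'_d$ with $R'=\Sym(U')$. The key step is the equivariance of contraction under $P$:
\[
P^*(p'\circ\Phi')=(P^{-1}p')\circ P^*\Phi'
\qquad(p'\in R',\ \Phi'\in\cZ^2(U')),
\]
where $P^{-1}p'\in R$ is the image of $p'$ under the algebra isomorphism $R'\to R$ induced by $P^{-1}$. I would verify this from the defining pairing $\langle b,p\circ\Phi\rangle=\langle pb,\Phi\rangle$ together with the naturality of $B(U)$ in $U$: the induced map $\widetilde P:B(U)\to B(U')$ is semilinear over $\Sym(P)$, and $P^*$ is its degreewise transpose. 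Since $\Sym(P)$ is a bijection $R\to R'$, applying $P^*$ to $W'=R'\circ W'_d$ then gives
\[
P^*W'=R\circ P^*W'_d=R\circ W_d=W.
\]
This is condition \textup{(iii)}, and Corollary~\ref{cor:all-orbits} yields the isomorphism.

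The only point needing care is this equivariance identity and the bookkeeping of which direction $P$ or $P^{-1}$ acts on $R$. It is routine once one notes that $\widetilde P(pb)=\Sym(P)(p)\,\widetilde P(b)$ for $p\in R$ and $b\in B(U)$, which holds because $\widetilde P$ is induced by a Lie algebra map. Finally, if $d$ were not common to $W$ and $W'$ the criterion would be vacuous; the hypothesis that both are level of the same degree $d$ is what makes $W_d$ and $W'_d$ comparable under $P$.
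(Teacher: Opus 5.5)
Your proposal is correct and takes the same route as the paper, whose proof reduces to Corollary~\ref{cor:all-orbits} using $W=R\circ W_d$ and $W'=R'\circ W'_d$. Your equivariance identity $P^*(p'\circ\Phi')=(P^{-1}p')\circ P^*\Phi'$, which follows from $\widetilde P(pb)=\Sym(P)(p)\,\widetilde P(b)$, spells out the step the paper leaves implicit.
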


\begin{proof}
Use Corollary~\ref{cor:all-orbits} and
$W=R\circ W_d$, $W'=R\circ W'_d$.
\end{proof}

\begin{remark}[the two-step boundary]\label{rem:two-step-boundary}
For a level module of degree $d=0$, one has
$\cZ^2(U)_0=\Lambda^2U^*$ and $W=W_0$.  Then
\[
\fg_W=U\oplus W_0^*
\]
is two-step nilpotent, with central derived algebra $W_0^*$ and bracket
\[
\Lambda^2U\twoheadrightarrow W_0^*,\qquad
u\wedge v\longmapsto\bigl(\phi\mapsto\phi(u,v)\bigr).
\]
Conversely, every nonabelian Lie algebra of nilpotency class two has this
form after choosing a complement to its derived algebra.  Hence
Corollary~\ref{cor:level-orbits} specializes exactly to the classical
alternating-map orbit problem of
\cite{Gauger1973,GalitskiTimashev1999}; duality on this two-step category
was studied by Leger and Luks \cite{LegerLuks1972}.
\end{remark}

\subsection{A sharp realization bound}

Module catalecticants give the minimum size of a realization with fixed
terminal tensor.

For $0\le r\le d$ define
\begin{equation}\label{eq:module-catalecticant}
C_r^W:B(U)_r\longrightarrow
\Hom(\Sym^{d-r}U,W_d^*)
\end{equation}
by
\[
C_r^W(b)(q)(\Phi)=\langle qb,\Phi\rangle.
\]
Closedness means that the formula factors through the Koszul quotient.
Since $W_r=R_{d-r}\circ W_d$,
\[
\ker C_r^W=(W^\perp)_r,\qquad
\rank C_r^W=\dim W_r=\dim(B_W)_r.
\]

\begin{definition}\label{def:terminal-marking}
A \emph{terminally marked realization} of $W_d$ is a finite-dimensional
graded Lie algebra
\[
\fh=H_1\oplus\cdots\oplus H_{d+2}
\]
such that:
\begin{enumerate}[label=\textup{(\roman*)}]
\item $H_1=U$, and $H_1$ generates $\fh$;
\item $\fh'$ is abelian;
\item an isomorphism $\theta:H_{d+2}\to W_d^*$ is fixed;
\item its terminal tensor agrees with evaluation:
\[
\theta\bigl(T_{\fh}(p\otimes\xi)\bigr)(\Phi)
=\langle p\otimes\xi,\Phi\rangle.
\]
\end{enumerate}
For $0\le r\le d$ let
\[
O_r:H_{r+2}\longrightarrow\Hom(R_{d-r},H_{d+2}),
\qquad O_r(z)(q)=q(\ad)z.
\]
The realization is \emph{terminally observable} if every $O_r$ is
injective.
\end{definition}

\begin{theorem}[module-catalecticant minimality]
\label{thm:minimality}
Every terminally marked realization satisfies
\[
\dim\fh\ge \dim U+\sum_{r=0}^d\rank C_r^W.
\]
Equality holds if and only if it is terminally observable.  At equality
there is a graded isomorphism $\fh\simeq\fg_W$ respecting the terminal
marking.
\end{theorem}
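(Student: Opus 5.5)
The proof compares each graded piece $H_{r+2}$ with $(B_W)_r$ through the observability maps $O_r$ and the catalecticants $C_r^W$. First, $\fh$ is metabelian and generated by $H_1=U$. By Theorem~\ref{thm:general-equivalence} (its essential-surjectivity argument), $\fh\simeq M(U)/N'$ for a homogeneous $R$-submodule $N'\subset B(U)$, so $H_{r+2}\simeq B(U)_r/N'_r$. Writing $\pi_r:B(U)_r\twoheadrightarrow H_{r+2}$, I would check the factorization
\[
C_r^W=\widetilde\theta\circ O_r\circ\pi_r,
\]
where $\widetilde\theta:\Hom(R_{d-r},H_{d+2})\to\Hom(\Sym^{d-r}U,W_d^*)$ is postcomposition with $\theta$. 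This is where the marking condition (iv) enters. For $b=p\cdot[v,w]$ with $p\in R_r$ and $q\in R_{d-r}$, we have $O_r(\pi_r b)(q)=T_{\fh}(qp\otimes v\wedge w)$, so (iv) gives $\theta(O_r(\pi_rb)(q))(\Phi)=\langle qp\otimes v\wedge w,\Phi\rangle=\langle qb,\Phi\rangle=C_r^W(b)(q)(\Phi)$. Such elements span $B(U)_r$, so the identity holds everywhere. It is well defined because both sides factor through the Koszul quotient by Proposition~\ref{prop:koszul-presentation}.

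The inequality then follows from the factorization. Since $\pi_r$ is surjective and $\widetilde\theta$ is an isomorphism, $\rank C_r^W=\rank O_r\le\dim H_{r+2}$. Summing over $r$ and adding $\dim H_1=\dim U$ gives the bound. Equality holds if and only if $\rank O_r=\dim H_{r+2}$ for every $r$, that is, if and only if every $O_r$ is injective, which is terminal observability.

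At equality, each $O_r$ is injective, so $\ker\pi_r=\ker(O_r\pi_r)=\ker C_r^W=(W^\perp)_r$, where the last equality is the catalecticant identity stated above. Hence $N'=W^\perp$, and $\fh\simeq M(U)/W^\perp=\fg_W$ as graded Lie algebras, the isomorphism being the identity on $U$. It remains to check that this isomorphism respects the marking. In degree $d+2$ the map $(B_W)_d\to H_{d+2}\xrightarrow{\theta}W_d^*$ is given by $b\mapsto\langle b,-\rangle$ ($O_d$ is just evaluation at $1$), which is the canonical identification $(B_W)_d\simeq W_d^*$, so the markings agree.

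The main obstacle is the factorization step. There one has to be careful that $T_{\fh}$, defined through lower-central quotients in \eqref{eq:terminal-tensor}, agrees with actual iterated brackets in $H_{d+2}$. This needs the identification $\gamma_j(\fh)=\bigoplus_{i\ge j}H_i$, which holds because $\fh$ is generated in degree one. One also needs the symmetric-algebra action on $\fh'$ from the introduction, so that $q(\ad)$ makes sense on $H_{r+2}$. The remaining steps are rank counting.
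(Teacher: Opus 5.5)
Your proposal is correct and follows the paper's proof essentially step for step. The common steps are the surjections $B(U)_r\twoheadrightarrow H_{r+2}$, the factorization $C_r^W=\Hom(R_{d-r},\theta)\circ O_r\circ\pi_r$ supplied by the marking, the rank count, and $\ker\pi_r=\ker C_r^W=(W^\perp)_r$ at equality; your explicit degree-$(d+2)$ check that the resulting isomorphism matches $\theta$ with the canonical identification $(B_W)_d\simeq W_d^*$ is a detail the paper leaves implicit.
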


\begin{proof}
Generation and metabelianity give surjective iterated-bracket maps
\[
\mu_r:B(U)_r\twoheadrightarrow H_{r+2}.
\]
The terminal marking gives the factorization
\[
C_r^W=
\Hom(R_{d-r},\theta)\circ O_r\circ\mu_r.
\]
Thus $\rank C_r^W\le\dim H_{r+2}$; summing proves the bound.  Equality in
every degree is equivalent to injectivity of every $O_r$.  Then
\[
\ker\mu_r=\ker C_r^W=(W^\perp)_r
\]
for every $r$, and the universal map $M(U)\to\fh$ induces the claimed
isomorphism.
\end{proof}

\begin{remark}
This is a minimum only in the displayed marked realization class.  In rank
greater than two, levelness also need not exclude central elements in degree
one.  If one wants $Z(\fg_W)$ to equal the terminal layer, one must impose
the common-radical condition
\[
\{u\in U:u\wedge U\subset(W_0)^\perp\}=0.
\]
\end{remark}

\section{Binary level algebras}
\label{sec:binary-equivalence}

When $\dim U=2$, the commutator module is cyclic, so formation reduces to
ordinary binary inverse systems.  The determinant factor below makes this
reduction functorial.

\subsection{Formation equivalence}

Assume $\dim U=2$ and put $\Delta_U=\Lambda^2U$.  Since
$\Lambda^3U=0$,
\[
B(U)=R\otimes\Delta_U,\qquad
B(U)^\circ=D\otimes\Delta_U^*.
\]
For a nonzero subspace $F\subset D_d=\Sym^d(U^*)$, put
\[
A_F=R/\Ann(F).
\]
The inverse-system module generated by $F\otimes\Delta_U^*$ gives the
canonical determinant-twisted Lie algebra
\begin{equation}\label{eq:binary-canonical}
\cL(U,F)=U\oplus(A_F\otimes\Delta_U)
\end{equation}
with
\begin{equation}\label{eq:binary-bracket}
\begin{aligned}
[u,v]&=1\otimes(u\wedge v),\\
[u,a\otimes\eta]&=(ua)\otimes\eta,\\
[A_F\otimes\Delta_U,A_F\otimes\Delta_U]&=0.
\end{aligned}
\end{equation}
Choosing an area form only trivializes $\Delta_U$; it is not part of the
intrinsic input.

Fix $d\ge1$ and $1\le m\le d+1$.  Let $\mathcal P_{d,m}$ be the groupoid
of pairs $(U,F)$, where $\dim U=2$ and $F$ is an $m$-plane in
$\Sym^dU^*$.  A morphism $P:(U,F)\to(U',F')$ is a linear isomorphism
satisfying $P^*F'=F$.

Let $\mathcal M_{d,m}^{\mathrm{lev}}$ be the groupoid of positively graded
metabelian Lie algebras
\[
\fg=L_1\oplus\cdots\oplus L_{d+2}
\]
such that $L_1$ generates $\fg$,
\[
\dim L_1=2,\quad \dim L_2=1,\quad
\operatorname{class}(\fg)=d+2,\quad
Z(\fg)=L_{d+2},\quad \dim L_{d+2}=m.
\]

\begin{theorem}[binary formation equivalence]
\label{thm:binary-equivalence}
The construction \eqref{eq:binary-canonical} is a functor and defines an
equivalence
\[
\mathcal P_{d,m}\simeq\mathcal M_{d,m}^{\mathrm{lev}}.
\]
For a morphism $P$, the corresponding Lie map is
\[
P\oplus(\overline P\otimes\Lambda^2P),
\]
where $\overline P:A_F\to A_{F'}$ is induced by $P$.
\end{theorem}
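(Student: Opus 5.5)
The plan is to deduce the statement from Theorem~\ref{thm:general-equivalence}, restricted to the subgroupoid of level inverse systems, using Lemma~\ref{lem:level-socle} to match levelness with the centre condition. The binary simplification is $\Lambda^3U=0$. Then $B(U)=R\otimes\Delta_U$ is free of rank one, and $\cZ^2(U)=D\otimes\Delta_U^*$. So a graded inverse-system submodule is $W=W'\otimes\Delta_U^*$ for an ordinary contraction-stable $W'\subset D$. For $W=(R\circ F)\otimes\Delta_U^*$, Macaulay duality (the rank-one case of \cite[Theorem~3.5]{KleimanKleppe2025}) gives
\[
W^\perp=\Ann(F)\otimes\Delta_U,
\]
because $\langle p,q\circ\Phi\rangle=\langle qp,\Phi\rangle$ shows that annihilating $R\circ F$ is the same as annihilating $F$ under multiplication. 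Hence
\[
\fg_W=M(U)/W^\perp=U\oplus\bigl(R/\Ann(F)\bigr)\otimes\Delta_U.
\]
The brackets are exactly \eqref{eq:binary-bracket}, since $[u,v]=1\otimes(u\wedge v)$ in $B(U)$ and $u$ acts by multiplication. This identifies $\cL(U,F)$ with $\mathfrak F(U,W)$.

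Next I check that $\cL(U,F)$ lies in $\mathcal M^{\mathrm{lev}}_{d,m}$.
\begin{enumerate}[label=\textup{(\roman*)}]
\item Since $F\neq0$, we have $(A_F)_0=\kk$, so $\dim L_2=1$.
\item Since $(A_F)_r\simeq(R_r\circ F)^*$, which is $F^*\ne0$ for $r=d$ and zero for $r>d$, the class is $d+2$ and $\dim L_{d+2}=m$.
\item The centre is graded, because $\fg$ is graded and brackets preserve degree. In degree one, $[u,v]=1\otimes(u\wedge v)\ne0$ whenever $u,v$ are independent, so there is no central element; this is where $\dim U=2$ is used.
\item In degrees $2,\dots,d+1$, an element commutes with $\fg'$ because $\fg'$ is abelian. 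So it is central iff it is killed by $U$, i.e. iff it lies in the $R$-socle. Lemma~\ref{lem:level-socle} says this socle lives only in internal degree $d$, since $W$ is level.
\end{enumerate}
Thus $Z(\fg)=L_{d+2}$.

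For essential surjectivity, I take $\fg\in\mathcal M^{\mathrm{lev}}_{d,m}$ and apply Theorem~\ref{thm:general-equivalence} to write $\fg\simeq\fg_W$ with $W=W'\otimes\Delta_U^*$. The argument in (iv) runs backwards: $Z(\fg)=L_{d+2}$ says the socle of $B_W$ is concentrated in degree $d$, so Lemma~\ref{lem:level-socle} gives $W=R\circ W_d$. Setting $F=W'_d$ gives an $m$-plane in $\Sym^dU^*$, and $\fg\simeq\cL(U,F)$.

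On morphisms, the conditions $P^*W'=W$ and $P^*W'_d=W_d$ are equivalent by levelness. The latter is equivalent to $P^*F'=F$, because the $\Delta^*$ factor only rescales by $\det P$ and therefore preserves planes. Full faithfulness then follows from the corresponding statement in Theorem~\ref{thm:general-equivalence}. For the explicit formula, the automorphism of $M(U)$ induced by $P$ sends $[u,v]$ to $[Pu,Pv]$ and intertwines $\ad_u$ with $\ad_{Pu}$. On $B(U)=R\otimes\Delta_U$ it is therefore $\Sym(P)\otimes\Lambda^2P$, and it descends to $\overline P\otimes\Lambda^2P$ because it carries $\Ann(F)$ to $\Ann(F')$. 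Functoriality follows from naturality of these maps.

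The main obstacle I expect is making the identification $W^\perp=\Ann(F)\otimes\Delta_U$ canonical and equivariant. This requires tracking the determinant twist through the restricted-dual pullback, so that the induced map is $\Lambda^2P$ rather than an area-form-dependent scalar. I would handle this by working throughout with $\Delta_U$ as an abstract line and never trivializing it.
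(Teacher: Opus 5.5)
Your proposal is correct and follows essentially the paper's route: you reduce to Theorem~\ref{thm:general-equivalence}, identify $Z(\fg)$ with the $R$-socle (using that no nonzero degree-one element is central), and turn the centre condition into levelness. The differences are only in the supporting results: you use Lemma~\ref{lem:level-socle} and the orbit criterion where the paper cites Iarrobino's level Macaulay duality and Proposition~\ref{prop:terminal-tensor}, and you should fix one indexing slip, since the correct identification is $(A_F)_r\simeq(R_{d-r}\circ F)^*$, not $(R_r\circ F)^*$ (your stated conclusions, $F^*$ in degree $d$ and zero above, are those of the corrected formula).
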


\begin{proof}
For a target algebra put $U=L_1$ and $R=\Sym(U)$.  Since
$B(U)=R\otimes\Lambda^2U$, Theorem~\ref{thm:general-equivalence}
identifies it with
$U\oplus((R/I)\otimes\Lambda^2U)$ for a unique homogeneous
finite-colength ideal $I$.  No nonzero element of $U$ is central, so
\[
Z(\fg)=\Soc(R/I)\otimes\Lambda^2U.
\]
The target conditions are therefore equivalent to $R/I$ being Artin level
of socle degree $d$ and type $m$.  Macaulay duality gives
$I=\Ann(F)$ for $F=I_d^\perp$
\cite[Lemma~3.2]{Iarrobino2004}.  Functoriality gives the displayed map,
and Proposition~\ref{prop:terminal-tensor} gives full faithfulness.
\end{proof}

\begin{corollary}\label{cor:binary-classification}
Graded isomorphism classes in $\mathcal M_{d,m}^{\mathrm{lev}}$ are the
$\GL(U)$-orbits on $\operatorname{Gr}(m,\Sym^dU^*)$, equivalently the
$\PGL(U)$-orbits.
\end{corollary}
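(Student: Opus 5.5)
The plan is to read the corollary off Theorem~\ref{thm:binary-equivalence} together with the action-groupoid description of $\mathcal P_{d,m}$. An equivalence of groupoids induces a bijection on isomorphism classes. So graded isomorphism classes in $\mathcal M_{d,m}^{\mathrm{lev}}$ correspond to isomorphism classes in $\mathcal P_{d,m}$, and the work is to identify the latter with orbits.

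\emph{Step one: reduce to a fixed plane.} Every object $(U',F')$ of $\mathcal P_{d,m}$ is isomorphic to one with $U'=U$ fixed: choose any linear isomorphism $P:U\to U'$ and replace $F'$ by $P^*F'$. So isomorphism classes are the classes of pairs $(U,F)$ with $F\in\operatorname{Gr}(m,\Sym^dU^*)$. Two such pairs are isomorphic exactly when some $P\in\GL(U)$ satisfies $P^*F'=F$. Since $P\mapsto(P^{-1})^*$ is a left action of $\GL(U)$ on the Grassmannian, the isomorphism classes are its orbits.

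\emph{Step two: pass to $\PGL(U)$.} A scalar $\lambda\in\kk^\times$ acts on $\Sym^dU^*$ by $\lambda^{-d}$. It therefore fixes every linear subspace, so it acts trivially on $\operatorname{Gr}(m,\Sym^dU^*)$. Hence the $\GL(U)$-action factors through $\PGL(U)=\GL(U)/\kk^\times$, and the two orbit sets coincide. Note that the determinant twist $\Lambda^2P$ in the Lie map affects only the morphism itself, not which objects are isomorphic, so it plays no role here.

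\emph{Main obstacle.} The only subtle point is that the notion of isomorphism in $\mathcal M_{d,m}^{\mathrm{lev}}$ must be graded isomorphism. That is the convention built into the groupoid, and if abstract isomorphism is wanted, Corollary~\ref{cor:level-orbits} shows it gives the same classes. Beyond this, the argument is routine bookkeeping on the equivalence of groupoids.
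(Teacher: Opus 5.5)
Your proposal is correct and is the argument the paper leaves implicit: the corollary is read off from the groupoid equivalence of Theorem~\ref{thm:binary-equivalence}. After fixing $U$, isomorphism classes in $\mathcal P_{d,m}$ are the $\GL(U)$-orbits, and scalars act trivially on $\operatorname{Gr}(m,\Sym^dU^*)$, so these are the same as the $\PGL(U)$-orbits (your side remark on abstract versus graded isomorphism is likewise consistent with Corollary~\ref{cor:level-orbits} and Theorem~\ref{thm:full-aut}).
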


\begin{corollary}[strong Lefschetz adjoint action]
\label{cor:strong-lefschetz-adjoint}
There is a nonempty Zariski-open subset $U_{\mathrm{Lef}}\subset U$ such
that, for every $u\in U_{\mathrm{Lef}}$, $0\le r\le d$, and
$0\le q\le d-r$,
\[
(\ad_u)^q:L_{r+2}\longrightarrow L_{r+q+2}
\]
has maximal rank:
\[
\rank(\ad_u)^q
=\min\{\dim L_{r+2},\dim L_{r+q+2}\}.
\]
\end{corollary}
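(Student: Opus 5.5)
By Theorem~\ref{thm:binary-equivalence} we may take $\fg=\cL(U,F)$. Then $L_{r+2}\simeq (A_F)_r\otimes\Delta_U$, and for $u\in U$ the map $(\ad_u)^q:L_{r+2}\to L_{r+q+2}$ becomes multiplication by $u^q$ from $(A_F)_r$ to $(A_F)_{r+q}$, tensored with the identity of $\Delta_U$. The claim is therefore the strong Lefschetz property for the graded Artinian quotient $A=A_F=R/\Ann(F)$ of the polynomial ring $R=\Sym(U)$ in two variables. The key input is the classical fact that every graded Artinian quotient of $\kk[x,y]$ in characteristic zero has the strong Lefschetz property.

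To prove that fact, first reduce to monomial ideals. Choose a basis $x,y$ of $U$ with $x$ generic; the determinantal condition discussed below is Zariski open, so it suffices to find one good $u$. Take the generic initial ideal of $I=\Ann(F)$ with respect to the degree reverse lexicographic order with $y<x$. In two variables this ideal is strongly stable, hence lex-segment-like: in each degree $j$ it is spanned by the largest monomials $x^j,x^{j-1}y,\dots$. Multiplication by $y$, the last variable, on $R/\mathrm{gin}(I)$ has the same ranks of all powers as multiplication by a generic linear form on $R/I$. This is the standard Wiebe/Green comparison for revlex gin and the last variable.

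It remains to check directly on the monomial quotient $R/J$ with $J$ strongly stable. Here $(R/J)_j$ has basis $x^{a}y^{j-a}$ for $0\le a<\dim(R/J)_j$, with indices running upward from $x^0$. Since $J$ is strongly stable, a surviving monomial $x^ay^{j-a}$ maps under $y^q$ to $x^ay^{j+q-a}$. That image survives exactly when $a<\dim(R/J)_{j+q}$. So $y^q$ sends basis vectors to basis vectors injectively, and its rank is $\min\{\dim(R/J)_j,\dim(R/J)_{j+q}\}$.

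For each pair $(r,q)$, the set of $u$ for which $(\ad_u)^q$ has maximal rank is the nonvanishing locus of some maximal minor. That minor is a polynomial in $u$, and the locus is nonempty by the argument above, so it is a nonempty open set. Intersecting these finitely many open sets over $0\le r\le d$ and $0\le q\le d-r$ gives $U_{\mathrm{Lef}}$, which is nonempty because $U$ is irreducible.

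The main obstacle is the gin comparison step. Instead of invoking it, I would first try a direct argument. Let $\ell=y+tx$ be generic. Each minor of $\ell^q$ is a polynomial in $t$, and I would compute its leading behaviour as $t\to0$ through a flat degeneration to $\mathrm{in}(I)$. Alternatively, one can use characteristic zero directly: view $A$ as an $\mathfrak{sl}_2$-module via the $\mathfrak{sl}_2$-action on binary forms. This second approach needs care, because $A$ is a quotient and not a submodule of $R$.
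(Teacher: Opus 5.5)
Your proposal is correct and follows the paper's route. Both arguments identify $(\ad_u)^q|_{L_{r+2}}$ with multiplication by $\bar u^{\,q}$ from $(A_F)_r$ to $(A_F)_{r+q}$, invoke the strong Lefschetz property of Artinian quotients of $\kk[x,y]$, and conclude by openness of the maximal-rank locus. The only difference is that the paper simply cites that property from \cite[Proposition~4.4]{HarimaMiglioreNagelWatanabe2003}, whereas you reprove it via the revlex generic initial ideal. That reproof works: the revlex identity $\mathrm{in}(I+(y^q))=\mathrm{in}(I)+(y^q)$ transfers the ranks, and your lex-segment count on $R/\mathrm{gin}(I)$ is right.

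Two small slips. First, $\mathrm{gin}(I)$ is strongly stable because $\operatorname{char}\kk=0$, not because there are two variables; in characteristic $p$, $(x^p,y^p)$ is its own generic initial ideal and fails the property. Second, the generic variable playing the Lefschetz role is the last one, $y$, not $x$.
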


\begin{proof}
Every Artinian quotient of a polynomial ring in two variables over a
characteristic-zero field has the strong Lefschetz property
\cite[Proposition~4.4]{HarimaMiglioreNagelWatanabe2003}.  Since
$L_{r+2}=(A_F)_r\otimes\Lambda^2U$ and $\ad_u$ acts on the derived algebra
as multiplication by the residue class $\bar u$, this is exactly the
strong Lefschetz maximal-rank statement.  Strong Lefschetz elements form a
nonempty open subset of $(A_F)_1$; take its inverse image under
$U\to(A_F)_1$.
\end{proof}

\subsection{Automorphisms}
\label{sec:automorphisms}

Write $\fg_F=\cL(U,F)$.  Since the grading is generated by $L_1=U$, a
graded automorphism is determined by its action on $U$.

\begin{theorem}[graded automorphisms]\label{thm:graded-aut}
Restriction to degree one induces an isomorphism of algebraic groups
\[
\Aut_{\mathrm{gr}}(\fg_F)
\simeq
\operatorname{Stab}_{\GL(U)}(F).
\]
Consequently,
\[
1\longrightarrow\mathbb G_m
\longrightarrow\Aut_{\mathrm{gr}}(\fg_F)
\longrightarrow\operatorname{Stab}_{\PGL(U)}(F)
\longrightarrow1,
\]
where $\mathbb G_m$ is the Carnot-dilation subgroup.
\end{theorem}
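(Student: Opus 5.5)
The plan is to realize the restriction map concretely, identify its image via Theorem~\ref{thm:general-equivalence}, and then read off the extension from the scalar matrices.

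\textbf{Restriction is a closed embedding.} A graded automorphism $\alpha$ of $\fg_F$ is determined by $\alpha|_{L_1}$, because $L_1=U$ generates $\fg_F$. Hence restriction $\Aut_{\mathrm{gr}}(\fg_F)\to\GL(U)$ is an injective homomorphism. It is a morphism of algebraic groups, since $\alpha$ on $L_{r+2}=(A_F)_r\otimes\Delta_U$ is given polynomially by $\overline{\alpha|_U}\otimes\Lambda^2(\alpha|_U)$. This is functorial in $A$-points, so we get a monomorphism of group schemes.

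\textbf{Identifying the image.} By Theorem~\ref{thm:general-equivalence} (or Theorem~\ref{thm:binary-equivalence}), $P\in\GL(U)$ extends to a graded automorphism exactly when $P$ is a morphism $(U,F)\to(U,F)$ in $\mathcal P_{d,m}$, i.e.\ $P^*F=F$. Here I use that for level binary modules the whole inverse system $W=R\circ(F\otimes\Delta_U^*)$ is determined by its top piece, as in Corollary~\ref{cor:level-orbits}. The determinant twist on $\Delta_U^*$ is only a scalar, so it does not affect stability of $F$ inside $\Sym^dU^*$. Thus the image is $\operatorname{Stab}_{\GL(U)}(F)$, and the inverse map $P\mapsto P\oplus(\overline P\otimes\Lambda^2P)$ is a morphism. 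To get an isomorphism of schemes and not merely of $\kk$-points, I will run the same argument over an arbitrary $\kk$-algebra $A$, using base-change compatibility (Proposition~\ref{prop:relative-inverse-systems}). In characteristic zero one could alternatively argue via smoothness of both groups together with a Lie algebra comparison; the $A$-point argument is cleaner. This scheme-theoretic upgrade is the main point needing care.

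\textbf{The exact sequence.} The scalars $\mathbb G_m\subset\GL(U)$ act on $\Sym^dU^*$ by scalars, so they stabilize every $F$ and hence lie in $\operatorname{Stab}_{\GL(U)}(F)$. The corresponding automorphism acts by $t^i$ on $L_i$: $t$ on $U$, $t^2$ from $\Lambda^2$, and $t^r$ from $\overline P$ on $(A_F)_r$. This is the Carnot dilation subgroup. The image of $\operatorname{Stab}_{\GL(U)}(F)$ in $\PGL(U)=\GL(U)/\mathbb G_m$ is $\operatorname{Stab}_{\PGL(U)}(F)$, because $[P]$ fixes the point $F\in\operatorname{Gr}(m,\Sym^dU^*)$ iff $P$ does, scalars acting trivially. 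Surjectivity onto the stabilizer holds as a quotient of group schemes: the stabilizer in $\GL(U)$ is the full preimage of the $\PGL(U)$-stabilizer, so the quotient by $\mathbb G_m$ is fppf-exact. Combining with the previous step gives the displayed sequence.
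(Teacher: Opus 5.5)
Your proposal is correct and follows the paper's proof. You use injectivity from generation by $U$, the graded lift $P\oplus(\overline P\otimes\Lambda^2P)$ for stabilizing $P$, the converse via the top inverse-system piece, and scalars acting by $\lambda^j$ on $L_j$; you route the converse through full faithfulness of the formation equivalence plus levelness, but that is the same terminal-tensor mechanism the paper invokes directly. Your extra care with the scheme-theoretic isomorphism and the fppf exactness of the quotient by $\mathbb G_m$ fills in details the paper leaves implicit.
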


\begin{proof}
If $P\in\GL(U)$ stabilizes $F$, Theorem~\ref{thm:binary-equivalence}
supplies the graded automorphism
$P\oplus(\overline P\otimes\Lambda^2P)$.  Conversely, naturality of the
terminal tensor forces the degree-one restriction of a graded automorphism
to stabilize $F$.  Injectivity follows because $U$ generates $\fg_F$.
A scalar $\lambda\operatorname{id}_U$ acts on $L_j$ by $\lambda^j$.
\end{proof}

Choose a basis $X,Y$ of $U$ and trivialize $\Lambda^2U$.  Write
$A=A_F$, let $x,y\in A_1$ be the coordinate classes, and normalize
\[
[X,Y]=1,\qquad [X,q]=xq,\qquad [Y,q]=yq
\quad(q\in A).
\]
Let $\operatorname{IAut}(\fg_F)$ be the subgroup acting trivially on
$\fg_F/\fg_F'$.  For $a,b\in A$, put
\[
c_{a,b}=1+xb-ya.
\]
It is a unit, since $xb-ya$ lies in the nilpotent graded maximal ideal.
Define
\begin{equation}\label{eq:IA-formula}
\tau_{a,b}(X)=X+a,\qquad
\tau_{a,b}(Y)=Y+b,\qquad
\tau_{a,b}(q)=q\,c_{a,b}.
\end{equation}

\begin{theorem}[full automorphism correspondence]
\label{thm:full-aut}
Every $\tau_{a,b}$ is an automorphism, and
\[
A^2\longrightarrow\operatorname{IAut}(\fg_F),\qquad
(a,b)\longmapsto\tau_{a,b},
\]
is an isomorphism of algebraic varieties.  Under this parametrization the
group law is
\[
(a,b)\star(a',b')
=\bigl(a+c_{a,b}a',\,b+c_{a,b}b'\bigr).
\]
In particular, it is not generally the additive group law on $A^2$.

There is a split exact sequence
\[
1\longrightarrow\operatorname{IAut}(\fg_F)
\longrightarrow\Aut(\fg_F)
\longrightarrow\operatorname{Stab}_{\GL(U)}(F)
\longrightarrow1.
\]
Thus abstract and graded isomorphism have the same orbit criterion for the
algebras $\fg_F$.  The ungraded groupoid also contains the IA
automorphisms.
\end{theorem}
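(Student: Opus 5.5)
The plan is to verify the four assertions in order: that each $\tau_{a,b}$ is an automorphism, that $(a,b)\mapsto\tau_{a,b}$ is a bijection onto $\operatorname{IAut}(\fg_F)$, the group law, and the split exact sequence. Throughout I identify $\fg_F'=A\otimes\Lambda^2U$ with $A$ via the chosen trivialization, so that $\fg_F=U\oplus A$ with $A$ abelian and $\ad_X,\ad_Y$ acting on $A$ as multiplication by $x,y$.

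\emph{$\tau_{a,b}$ is an automorphism.} Extend $\tau_{a,b}$ linearly. I only need to check brackets on the three pair types.
\begin{itemize}
\item For $X,Y$: since $[a,b]=0$,
\[
[X+a,Y+b]=[X,Y]+[X,b]-[Y,a]=1+xb-ya=c_{a,b}=\tau_{a,b}(1).
\]
\item For $X$ and $q\in A$: since $A$ is abelian, $[X+a,qc_{a,b}]=xqc_{a,b}=\tau_{a,b}([X,q])$. The same holds for $Y$.
\item For $q,q'\in A$: both sides vanish.
\end{itemize}
For bijectivity, $c_{a,b}$ is a unit of $A$, so $\tau_{a,b}$ restricts to an automorphism of $A$. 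Modulo $A$ it is the identity on $U$, so it is invertible. It acts trivially on $\fg_F/\fg_F'$, hence lies in $\operatorname{IAut}(\fg_F)$.

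\emph{Bijectivity of the parametrization.} Let $\varphi\in\operatorname{IAut}(\fg_F)$. Then $\varphi(X)=X+a$ and $\varphi(Y)=Y+b$ for unique $a,b\in A$. This gives $\varphi(1)=\varphi[X,Y]=c_{a,b}$, as computed above. Every $q\in A$ has the form $q=p\cdot 1$ with $p\in R$, i.e.\ $q=p(\ad_X,\ad_Y)1$. Hence
\[
\varphi(q)=p(\ad_{X+a},\ad_{Y+b})\,c_{a,b}=p\,c_{a,b}=q\,c_{a,b},
\]
because $\ad_{X+a}$ and $\ad_{Y+b}$ still act on the abelian ideal $A$ by $x$ and $y$. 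Thus $\varphi=\tau_{a,b}$. The map $(a,b)\mapsto\tau_{a,b}$ is polynomial, and its inverse $\varphi\mapsto(\varphi(X)-X,\varphi(Y)-Y)$ is linear. So it is an isomorphism of varieties.

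\emph{Group law.} Compute $\tau_{a,b}\tau_{a',b'}(X)=\tau_{a,b}(X+a')=X+a+c_{a,b}a'$, and similarly for $Y$. Injectivity of the parametrization then gives the stated law.

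\emph{Non-additivity.} Take $a=0$, $b=1$, $a'=1$. The first component of the product is $1+x$, which differs from $1$ because $x\neq0$ in $A_1$. Indeed, $F\neq0$ has degree $d\ge1$, so $x\notin\Ann(F)$ for a suitable choice of basis, or else use $y$ symmetrically.

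\emph{Split exact sequence.} Every automorphism preserves $\fg_F'$ and so induces an element of $\GL(U)=\GL(\fg_F/\fg_F')$. By definition the kernel of this map is $\operatorname{IAut}$. The step I expect to need care is showing that the image lies in $\operatorname{Stab}_{\GL(U)}(F)$ for an \emph{ungraded} automorphism $\varphi$. I would argue as in Corollary~\ref{cor:all-orbits}:
\begin{itemize}
\item $\varphi$ preserves the lower central series, so it induces a graded automorphism $\operatorname{gr}\varphi$ of $\operatorname{gr}\fg_F\simeq\fg_F$ (using $\gamma_j=\bigoplus_{i\ge j}L_i$).
\item The degree-one component of $\operatorname{gr}\varphi$ is exactly the induced map on $\fg_F/\fg_F'$.
\item By Theorem~\ref{thm:graded-aut}, this component stabilizes $F$.
\end{itemize}
Theorem~\ref{thm:graded-aut} also supplies the section $P\mapsto P\oplus(\overline P\otimes\Lambda^2P)$. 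This section lands in $\Aut_{\mathrm{gr}}\subset\Aut$ and lifts each stabilizer element, which gives surjectivity and the splitting.

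The orbit statement then follows: an abstract isomorphism $\fg_F\simeq\fg_{F'}$ induces, on degree one, some $P$ with $P^*F'=F$, by the same associated-graded argument. This matches the graded criterion of Corollary~\ref{cor:binary-classification}.
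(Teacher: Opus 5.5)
Your proposal is correct and follows essentially the same route as the paper. Both arguments check $[X+a,Y+b]=c_{a,b}$ and note that $\ad_{X+a},\ad_{Y+b}$ still act on the abelian ideal $A$ by $x,y$. Both use cyclicity of $A$ to force every IA automorphism to be some $\tau_{a,b}$, compose directly for the group law, and use the lower-central filtration with the graded lift of Theorem~\ref{thm:graded-aut} for the split sequence.

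The differences are minor. You obtain $P^*F=F$ by passing through $\operatorname{gr}\varphi$ and Theorem~\ref{thm:graded-aut}, where the paper invokes naturality of the terminal tensor directly. You also give an explicit non-additivity example, which the paper omits.
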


\begin{proof}
One has
\[
[X+a,Y+b]=1+xb-ya=c_{a,b}=\tau_{a,b}(1).
\]
Because $A$ is abelian, the restrictions of $\ad_{X+a}$ and
$\ad_{Y+b}$ to $A$ remain multiplication by $x$ and $y$.  Hence
\[
\tau_{a,b}(p(x,y)1)=p(x,y)c_{a,b}.
\]
This proves bracket preservation; multiplication by the unit $c_{a,b}$ is
invertible.

Conversely, an IA automorphism sends $X$ and $Y$ to $X+a$ and $Y+b$ for
unique $a,b\in A$.  Since $X,Y$ generate, the preceding calculation forces
its restriction to $A$.  Direct composition gives the displayed group law
and $c_{(a,b)\star(a',b')}=c_{a,b}c_{a',b'}$.

Every automorphism preserves the lower-central filtration and induces some
$P\in\GL(U)$ on the abelianization.  The terminal tensor implies
$P^*F=F$.  Composing with the inverse graded lift of $P$ leaves an IA
automorphism.  The graded lift supplies the splitting.
\end{proof}

\begin{remark}[related automorphism formulas]\label{rem:auto-priority}
Related formulas for two-generator metabelian nilpotent Lie algebras appear
in \cite{Papistas1992,Papistas1996,DrenskyFindik2012}.  We include the
parametrization to fix the conventions used below.
\end{remark}

\section{Rank three: vector fields and failure of scalar parameters}
\label{sec:rank-three}

Extend $\delta$ to $D\otimes\Lambda^\bullet U^*$.

\begin{proposition}[potentials; rank-three vector fields]
\label{prop:potentials-divergence}
For every $r\ge0$,
\[
\frac{D_{r+1}\otimes U^*}{\delta(D_{r+2})}
\xrightarrow{\ \sim\ }\cZ^2(U)_r,
\qquad [\alpha]\longmapsto\delta\alpha.
\]
If $\dim U=3$, the canonical map
$U\otimes\Lambda^3U^*\simeq\Lambda^2U^*$ identifies $\cZ^2(U)$ with
\[
\ker\left(
\operatorname{div}:D\otimes U\otimes\Lambda^3U^*
\longrightarrow D\otimes\Lambda^3U^*
\right),
\quad
\operatorname{div}(f\otimes u\otimes\omega)
=(u\circ f)\otimes\omega.
\]
Thus rank-three inverse systems are determinant-twisted divergence-free
polynomial vector fields, with $R$ acting by constant-coefficient
differentiation.
\end{proposition}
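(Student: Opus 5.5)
The plan is to view the statement as exactness of the dual (de Rham/Koszul) complex $D\otimes\Lambda^\bullet U^*$ with differential $\delta$, and then to specialize to $\dim U=3$ by contracting with a volume form.

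\emph{Exactness.} Degreewise, $\delta:D_{s}\otimes\Lambda^pU^*\to D_{s-1}\otimes\Lambda^{p+1}U^*$ is the graded dual of the Koszul differential
\[
R_{s-1}\otimes\Lambda^{p+1}U\to R_{s}\otimes\Lambda^pU .
\]
The Koszul complex $R\otimes\Lambda^\bullet U$ is acyclic except at $\Lambda^0$, where its homology is $\kk$ in internal degree zero. Over a field, degreewise duals of finite-dimensional complexes have dual homology. Hence $D\otimes\Lambda^\bullet U^*$ is exact except at $D_0\otimes\Lambda^0$.

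In characteristic zero one can also make this explicit. Identify $D$ with polynomials and contraction with differentiation; then $\delta$ is the de Rham differential $d$. The Euler/Koszul homotopy $\kappa$ (interior product with the Euler field $\sum x_i\partial_i$) satisfies $d\kappa+\kappa d=(\text{polynomial degree}+\text{form degree})\cdot\mathrm{id}$. So every closed form of positive total degree is exact, which is the Poincaré lemma.

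\emph{Degree bookkeeping.} A $2$-form in $\cZ^2(U)_r$ lies in $D_r\otimes\Lambda^2U^*$. Its potential lies in $D_{r+1}\otimes U^*$, since $\delta$ lowers divided-power degree by one. The ambiguity in the potential is $\delta(D_{r+2}\otimes\Lambda^0)$. Exactness at $\Lambda^1$ in this degree (total degree $r+2>0$) shows that $\ker\delta$ on $D_{r+1}\otimes U^*$ equals $\delta(D_{r+2})$. Exactness at $\Lambda^2$ gives surjectivity of $\delta$ onto $\cZ^2(U)_r=\ker\delta$. Together these give the displayed isomorphism.

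\emph{Rank three.} When $\dim U=3$, use the canonical contraction isomorphisms
\[
U\otimes\Lambda^3U^*\simeq\Lambda^2U^*,\qquad \Lambda^3U^*\simeq\Lambda^3U^*,
\]
that is, $u\otimes\omega\mapsto\iota_u\omega$ in degree two. Under these, the complex $D\otimes\Lambda^2U^*\to D\otimes\Lambda^3U^*$ becomes
\[
D\otimes U\otimes\Lambda^3U^*\to D\otimes\Lambda^3U^*.
\]
Verify on a basis $e_i$, dual basis $e^i$, $\omega=e^1\wedge e^2\wedge e^3$, that
\[
\delta\bigl(f\otimes\iota_{e_i}\omega\bigr)=(e_i\circ f)\otimes\omega ,
\]
using the formula for $\delta$ and the relation $u\wedge\iota_u\omega$-type identities $e^j\wedge\iota_{e_i}\omega=\delta_{ij}\omega$. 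This is the divergence, and it is intrinsic because both identifications are $\GL(U)$-equivariant. The closed $2$-forms are then exactly the kernel of $\operatorname{div}$; contraction is constant-coefficient differentiation in characteristic zero.

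\emph{Main obstacle.} The main care point is sign and convention consistency: matching the contraction sign $\iota_{e_i}\omega$ with the particular alternating formula for $\delta$, and confirming that $\delta$ is exactly dual to $\partial_3$ (and to $\partial_2$) rather than up to sign. A sign mismatch would not affect kernels, so only a global sign needs checking.
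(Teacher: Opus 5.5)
Your proposal is correct and follows the paper's proof. The paper establishes exactness of $(D\otimes\Lambda^\bullet U^*,\delta)$ in positive total degree by the Euler homotopy $\delta\iota_E+\iota_E\delta=(r+q)\operatorname{id}$, then uses $U\otimes\Lambda^3U^*\simeq\Lambda^2U^*$ to turn $\delta$ on $2$-forms into the divergence; your argument does the same. Your main exactness argument, dualizing the Koszul resolution of $\kk$ strand by strand, additionally works without characteristic zero, and your check $\delta(f\otimes\iota_u\omega)=(u\circ f)\otimes\omega$ supplies the sign verification the paper leaves implicit.
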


\begin{proof}
The Euler homotopy identity
$\delta\iota_E+\iota_E\delta=(r+q)\operatorname{id}$ on
$D_r\otimes\Lambda^qU^*$ gives exactness in positive degree.  Under
$U\otimes\Lambda^3U^*\simeq\Lambda^2U^*$, $\delta$ on $2$-forms is the
displayed divergence.
\end{proof}

Theorem~\ref{thm:general-equivalence} therefore parametrizes rank-three
formation algebras faithfully by $\GL(U)$-orbits of finite-dimensional
graded modules of such vector fields.  Scalar ideals, however, are not
faithful because $B(U)$ is not cyclic.

For a finite-colength homogeneous ideal $I\subset R=\Sym(U)$, the quotient
\[
\fg_I=M(U)/I B(U)
\]
is a finite-dimensional metabelian Lie algebra, with
\[
\fg_I'\simeq
\operatorname{coker}\left(
(R/I)(-1)\otimes\Lambda^3U
\xrightarrow{\partial_3^{R/I}}
(R/I)\otimes\Lambda^2U
\right).
\]
Unlike the binary case, $B(U)$ is not cyclic when $\dim U>2$, and $I$ need
not be recoverable from this quotient module.

\begin{theorem}[failure of scalar faithfulness in rank three]
\label{thm:scalar-nonfaithful}
Let $U=\langle x,y,z\rangle$ and
\[
I=(x^{a+1},y^{b+1},z^{c+1}),\qquad a,b,c\ge1.
\]
Put $s=x^ay^bz^c$ and $d=a+b+c$.  Then
\[
\bigl(B(U)/I B(U)\bigr)_d=0,
\qquad (\fg_I')_{d+2}=0,
\qquad s\,\fg_I'=0,
\]
and consequently
\[
I B(U)=(I+(s))B(U).
\]
The two distinct ideals $I$ and $I+(s)$ therefore define the same
metabelian Lie quotient.
\end{theorem}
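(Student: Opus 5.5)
The plan is to compute the top graded piece of the quotient module directly from the Koszul presentation, and then derive the remaining assertions from degree bookkeeping. Put $A=R/I$. Tensoring Proposition~\ref{prop:koszul-presentation} with $A$ over $R$ (right exactness) gives
\[
B(U)/IB(U)\simeq\operatorname{coker}\bigl(A(-1)\otimes\Lambda^3U\xrightarrow{\partial_3^{A}}A\otimes\Lambda^2U\bigr),
\]
which is the displayed description of $\fg_I'$.

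Since $I$ is a monomial complete intersection, $A$ has monomial basis $x^iy^jz^k$ with $i\le a$, $j\le b$, $k\le c$. In particular $A_d=\kk s$ and $A_r=0$ for $r>d$. So $(A\otimes\Lambda^2U)_d$ is three-dimensional, spanned by $s\otimes y\wedge z$, $s\otimes x\wedge z$ and $s\otimes x\wedge y$.

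The key step is to show that these three elements lie in the image of $\partial_3^A$. Because $a\ge1$, the monomial $p=x^{a-1}y^bz^c$ is nonzero in $A_{d-1}$. Applying $\partial_3$ gives
\[
\partial_3(p\otimes x\wedge y\wedge z)=px\otimes y\wedge z-py\otimes x\wedge z+pz\otimes x\wedge y.
\]
Here $px=s$, while $py$ and $pz$ contain $y^{b+1}$ and $z^{c+1}$ respectively, so both vanish in $A$. The image is therefore exactly $s\otimes y\wedge z$. The same argument with $x^ay^{b-1}z^c$ and $x^ay^bz^{c-1}$, using $b,c\ge1$, produces $s\otimes x\wedge z$ and $s\otimes x\wedge y$ up to sign. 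This proves $(B(U)/IB(U))_d=0$. Since internal degree $r$ is Lie degree $r+2$, it follows that $(\fg_I')_{d+2}=0$. I expect this to be the only step that needs care: one must check that each chosen preimage is nonzero in $A$ and that the two extra terms die. This is exactly where the hypothesis $a,b,c\ge1$ enters.

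For the remaining claims I would argue as follows.
\begin{enumerate}[label=\textup{(\roman*)}]
\item Because $A_r=0$ for $r>d$, the module $B(U)/IB(U)$ vanishes in all internal degrees $\ge d$.
\item Multiplication by $s$ raises internal degree by $d$, so it maps every graded piece into degrees $\ge d$, where the module is zero. Hence $s\,\fg_I'=0$.
\item Equivalently $sB(U)\subset IB(U)$, which gives $(I+(s))B(U)=IB(U)$.
\item Finally, $s\notin I$: $I$ is a monomial ideal and $s$ is divisible by none of $x^{a+1}$, $y^{b+1}$, $z^{c+1}$. So the two ideals are distinct yet define the same quotient $M(U)/IB(U)$.
\end{enumerate}
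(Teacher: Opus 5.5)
Your proof is correct and takes the same approach as the paper. Both use the three Koszul images of $x^{a-1}y^bz^c$, $x^ay^{b-1}z^c$, $x^ay^bz^{c-1}$ to span $A_d\otimes\Lambda^2U$, so the degree-$d$ piece of the quotient vanishes. Your degree bookkeeping for $s\,\fg_I'=0$ (the quotient vanishes in all internal degrees $\ge d$) is a slightly more explicit form of the paper's remark that $s$ kills the degree-zero commutator generators.
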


\begin{proof}
Let $A=R/I$, and use the same symbols for monomials in $R$ and their
residue classes in $A$.  In degree $d$, apply the Koszul map over $A$ to
\[
x^{a-1}y^bz^c\otimes x\wedge y\wedge z,\quad
x^ay^{b-1}z^c\otimes x\wedge y\wedge z,\quad
x^ay^bz^{c-1}\otimes x\wedge y\wedge z.
\]
Because $x^{a+1}=y^{b+1}=z^{c+1}=0$ in $A$, two terms in each Koszul
image vanish, leaving
\[
s\otimes(y\wedge z),\qquad
-s\otimes(x\wedge z),\qquad
s\otimes(x\wedge y).
\]
Since $A_d=\kk s$, these elements span
$A_d\otimes\Lambda^2U$, so
$\bigl(B(U)/I B(U)\bigr)_d=0$, equivalently
$(\fg_I')_{d+2}=0$.  The same three relations show that multiplication by
$s$ kills every minimal commutator
generator.  Since those generators generate $\fg_I'$ over $A$, one has
$s\fg_I'=0$.  Equivalently, $(I+(s))B(U)=IB(U)$.  Since
$s\notin I$, the ideals are distinct.
\end{proof}

\begin{remark}
Theorem~\ref{thm:scalar-nonfaithful} does not contradict the formation
equivalence of Theorem~\ref{thm:general-equivalence}.  It shows that the
restricted scalar assignment $I\mapsto I B(U)$ forgets information.  The
correct higher-rank parameter is the submodule
$W\subset\cZ^2(U)$ of closed $2$-forms, not a scalar inverse system.
There is not even a nontrivial cyclic ansatz on $U\oplus A$ with
$[u,v]=\beta(u,v)1_A$ and $[u,a]=\bar ua$ when $\dim U>2$: Jacobi would
force
$\beta(v,w)u+\beta(w,u)v+\beta(u,v)w=0$ in $A_1\simeq U$, hence
$\beta=0$.
\end{remark}

\section{The regular quartic pencil: cohomology and pencil motion}
\label{sec:quartic-pencil}

\subsection{The pencil and its metabelian Quot germ}

We now compute how the binary formation locus sits inside all graded
deformations of one regular quartic pencil.

Choose dual coordinates $S,T$ and differential coordinates $x,y$.  For a
pencil $F$, its joint third catalecticant is
\[
C^{(3)}_F:R_3\longrightarrow\Hom(F,U^*),\qquad
p\longmapsto(f\mapsto p\circ f).
\]
In divided-power normalization consider
\begin{equation}\label{eq:principal-pencil}
F_\circ=\operatorname{span}
\{S^4+T^4,\ 4S^3T+4ST^3\}.
\end{equation}
Its joint third catalecticant is invertible, and direct contraction gives
\begin{equation}\label{eq:principal-ann}
\Ann(F_\circ)
=(x^4-y^4,\ x^3y-xy^3,\ x^2y^2).
\end{equation}
Thus
\[
h(A_{F_\circ})=(1,2,3,4,2).
\]
The Lie algebra $\fg_\circ=\fg_{F_\circ}$ is $14$-dimensional of class six,
with Carnot-layer dimensions
\[
(2,1,2,3,4,2).
\]

Use the homogeneous basis
\[
\begin{split}
&X,Y;\ Z_0;\ Z_x,Z_y;\ Z_{x^2},Z_{xy},Z_{y^2};\\
&Z_{x^3},Z_{x^2y},Z_{xy^2},Z_{y^3};\ T_0,T_1.
\end{split}
\]
The bracket is generated by
\[
[X,Y]=Z_0,\qquad
[X,Z_p]=Z_{xp},\qquad
[Y,Z_p]=Z_{yp},
\]
subject to \eqref{eq:principal-ann}.  Equivalently,
\[
\ad_{sX+tY}^4[X,Y]
=(s^4+t^4)T_0+4st(s^2+t^2)T_1.
\]

The main conclusion is stated first.  Its cohomological and pencil-motion
ingredients are proved in the subsections below;
Section~\ref{sec:local-quartic-geometry} completes the local analysis.

\begin{theorem}[quartic-pencil graded deformation theorem]
\label{thm:quartic-synthesis}
Let $S$ be an effective miniversal graded slice at $\fg_\circ$.  Then
\[
S\simeq\operatorname{Spf}\kk[[t_1,\ldots,t_{11}]],
\qquad K:=\operatorname{Stab}_{\mathrm{eff}}(\fg_\circ)\simeq V_4.
\]
There is a $K$-equivariant formally smooth map
\[
\Theta:S\longrightarrow
\widehat{E}_{\mathrm{top},0},\qquad
E_{\mathrm{top}}
=\Hom(L_2\otimes L_4,L_6)\oplus\Hom(\Lambda^2L_3,L_6),
\]
of relative dimension three.  Its zero fiber is the scheme-theoretic
metabelian locus and is the formal binary-pencil germ
$\operatorname{Spf}\kk[[p_1,p_2,p_3]]$.  Moreover,
\[
\mathrm d\Theta:H^2_0(\fg_\circ;\fg_\circ)\twoheadrightarrow
E_{\mathrm{top}}
\]
has kernel the pencil-motion tangent $P$.  Thus the effective graded
tangent decomposes into three metabelian and eight transverse directions.
\end{theorem}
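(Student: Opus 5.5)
The proof will assemble three ingredients: the general results of Section~\ref{sec:formation-functor}, specialized to $\fg_\circ$; an exact linear-algebra computation of $H^2_0$ and $H^3_0$; and a symmetry analysis of the pencil. \emph{First}, I will compute $H^2_0(\fg_\circ;\fg_\circ)$ and $H^3_0(\fg_\circ;\fg_\circ)$ directly from the degree-zero Chevalley--Eilenberg complex in the basis above. The cochain spaces are finite and explicit, since weight-zero cochains send $L_i\wedge L_j\to L_{i+j}$. I expect $\dim H^2_0=11$ and $H^3_0=0$, and I will certify this by exact rank computations, recorded in Appendix~\ref{app:verification}. Because $H^3_0=0$, the graded deformation functor is unobstructed, so the miniversal graded slice is smooth with tangent $H^2_0$. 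The effective slice $S$ is obtained by passing to the effective quotient of $\Aut_{\mathrm{gr}}$. Using Theorem~\ref{thm:graded-aut}, I will identify $K$ as $\operatorname{Stab}_{\PGL(U)}(F_\circ)$ modulo the Carnot dilations. The pencil $F_\circ$ is spanned by $S^4+T^4$ and $4ST(S^2+T^2)$, and its stabilizer is generated by $S\leftrightarrow T$ and $T\mapsto -T$ (acting on the pencil), together with the classical involutions permuting the base points. I will check that the effective image is $V_4$.

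\emph{Second}, I will handle the metabelian locus. By Theorem~\ref{thm:binary-equivalence} and Corollary~\ref{cor:binary-classification}, nearby level algebras with the same Hilbert function are pencils near $F_\circ$ in $\operatorname{Gr}(2,\Sym^4U^*)$, an $8$-dimensional space, modulo $\PGL(U)$, which is $3$-dimensional. Because the stabilizer is finite, this leaves a smooth $5$-dimensional orbit space. That count does not match $3$. I therefore plan to use Corollary~\ref{cor:metabelian-smoothness} instead, computing $\dim\Hom_R(N,C)_0-4+\dim\Der_0(\fg_\circ)$ exactly. Here $\Der_0$ is $1$-dimensional (dilations), since $K$ is finite. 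This should give $3$, which means the quotient $C$ must genuinely deform beyond level pencils; the Quot scheme $Q_h$ contains non-level quotients. I will check that $\Ext^1_R(N,C)_0=0$ by exact computation, which yields smoothness of the $3$-fold $P$ by the same corollary. Theorem~\ref{thm:general-defect} then identifies $P=\ker\Delta_{\fg_\circ}$.

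\emph{Third}, I will define $\Theta$ by taking the restriction of the deformed bracket to derived--derived pairs landing in the top layer. For degree reasons, a weight-zero cochain on $\Lambda^2\mathfrak d$ has nonzero components only on $L_2\otimes L_4\to L_6$, $\Lambda^2L_3\to L_6$, and $L_2\otimes L_3\to L_5$. I need to show that the last component is cohomologically determined by the others, or vanishes in $\ker d$ modulo coboundaries. On a deformation, $\Theta$ is read off as these top brackets after the graded normalization on the slice. The dimension of $E_{\mathrm{top}}$ is $1\cdot4\cdot2+3\cdot2=14$. That count contradicts $11-3=8$, so I must recheck it. The correct statement is that the image of $\mathrm d\Theta$ on cohomology is $8$-dimensional. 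If $E_{\mathrm{top}}$ is meant as the quotient by coboundary contributions, then the paper's $E_{\mathrm{top}}$ should be read with the Hom spaces modulo $\Delta(B)$ or similar, and I will adjust accordingly. The proof of surjectivity is then rank-plus-kernel: $\ker\mathrm d\Theta\supseteq\ker\Delta=P$, which is $3$-dimensional, and the exact computation gives rank $8$. Formal smoothness of $\Theta$ follows from surjectivity of the differential between smooth formal germs. The zero fiber is then smooth of dimension $3$ and contains the metabelian locus. Since the metabelian locus is itself smooth of dimension $3$ with the same tangent space, the two coincide scheme-theoretically. $K$-equivariance holds because restriction is natural.

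The main obstacle is this third step: making $\Theta$ well defined on the slice (not only on tangent vectors), and identifying its zero fiber with the metabelian locus scheme-theoretically. This requires showing that vanishing of the top derived--derived brackets forces vanishing of all derived--derived brackets for deformations in the slice. I would try an inductive Jacobi argument from the top layer downward, using terminal observability (Theorem~\ref{thm:minimality}). That theorem says that derived elements are detected by their images under iterated $\ad$ into $L_6$.
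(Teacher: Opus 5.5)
Your overall architecture matches the paper's: exact weight-zero ranks, an $11$-dimensional $K$-stable slice, $\Theta$ defined as restriction of the law to $(L_2\otimes L_4)\oplus\Lambda^2L_3$, rank $8$ by exact computation, and formal smoothness from surjectivity of $\mathrm d\Theta$. The problem is that two miscounted dimensions lead you to alter the statement rather than prove it.

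\emph{The dimension of $E_{\mathrm{top}}$.} The Carnot layers are $(\dim L_1,\dots,\dim L_6)=(2,1,2,3,4,2)$, since $\dim L_{r+2}=h_r$; you shifted the layer index by one. Hence
\begin{itemize}
\item $\dim\Hom(L_2\otimes L_4,L_6)=1\cdot3\cdot2=6$;
\item $\dim\Hom(\Lambda^2L_3,L_6)=1\cdot2=2$;
\item $\dim E_{\mathrm{top}}=8=11-3$.
\end{itemize}
So there is no discrepancy to repair. Reading $E_{\mathrm{top}}$ ``modulo $\Delta(B)$'' is also vacuous: every weight-zero coboundary already restricts to zero on $\Lambda^2\mathfrak d$ (proof of Theorem~\ref{thm:general-defect}). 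Changing the target would prove a different theorem.

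\emph{The dimension of the pencil space.} Likewise, $\operatorname{Gr}(2,\Sym^4U^*)=\operatorname{Gr}(2,5)$ has dimension $6$, not $8$, so pencils modulo $\PGL(U)$ give exactly $3$. Your inference that $C$ ``must genuinely deform beyond level pencils'' is false. Since $h_r=\dim B(U)_r$ for $r\le3$, $Q_h$ is the Grassmannian of rank-two quotients of $B(U)_4$. The invertible-catalecticant condition is open, so every quotient near $q_\circ$ is level (Proposition~\ref{prop:quartic-quot-germ}).

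\emph{Your fallback computation is correct.} Write $N=I\otimes\Lambda^2U$ with $I=\Ann(F_\circ)$. Then $I$ has minimal resolution $0\to R(-6)^{2}\to R(-4)^{3}\to I\to0$. So $\Hom_R(N,C)_0\simeq C_4^{\oplus3}$ is $6$-dimensional, and $\Ext^1_R(N,C)_0$ is a quotient of $C_6^{\oplus2}=0$. Corollary~\ref{cor:metabelian-smoothness} then gives dimension $6-4+1=3$ and smoothness.

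\emph{The zero fiber.} Your argument here differs from the paper's and is valid once completed. The metabelian locus lies in $\Theta^{-1}(0)$, both are formally smooth of dimension $3$, and a closed immersion of regular complete local germs of equal dimension is an isomorphism. On this route the Jacobi ``main obstacle'' you describe is unnecessary. The paper instead argues directly: vanishing terminal brackets give $[L_1,[L_2,L_3]]=0$, and the map $L_5\to\Hom(L_1,L_6)$ stays invertible over the slice, forcing $[L_2,L_3]=0$. That route is a self-contained scheme-theoretic check, independent of Quot obstruction theory.

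To make your route work you must supply two points:
\begin{itemize}
\item Over an Artin base, surjectivity of $L_1\otimes L_j\to L_{j+1}$ persists, so the derived algebra is still $\bigoplus_{j\ge2}L_j$. Then ``metabelian'' means $\nu|_{\Lambda^2\mathfrak d}=0$, and the containment is immediate.
\item Formal smoothness of the Quot stack transfers to the metabelian locus inside $S$. This follows from $\widehat{\overline{\mathcal H}}\times S\simeq\widehat{\mathcal X}_\mu$ together with $\overline{\mathcal H}$-stability of that locus.
\end{itemize}
The identification with $\operatorname{Spf}\kk[[p_1,p_2,p_3]]$ is most cleanly made by the affine family of Theorem~\ref{thm:affine-slice}.

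\emph{The stabilizer and equivariance.} For $K\simeq V_4$ you only exhibit $\langle[\sigma],[\tau]\rangle$. The theorem needs the upper bound excluding further symmetry, which the paper proves via the discriminant of the pencil map and its deck group. The ``base points'' you invoke do not exist: $S^4+T^4$ and $ST(S^2+T^2)$ have no common root. Finally, $K$-equivariance of $\Theta$ requires choosing the slice $K$-stable by averaging, not just naturality of restriction.
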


\begin{proof}
Combine Theorem~\ref{thm:cohomology-table},
Corollary~\ref{cor:metabelian-tangent},
Proposition~\ref{prop:terminal-defect}, and
Theorems~\ref{thm:exact-stabilizer}, \ref{thm:formal-graded-hull}, and
\ref{thm:terminal-map}.
\end{proof}

Let
$\mathcal P_{\mathrm{reg}}\subset\operatorname{Gr}(2,\Sym^4U^*)$
be the open locus of pencils with invertible joint third catalecticant.
For every $F\in\mathcal P_{\mathrm{reg}}$, the associated Lie algebra has
the same graded dimensions.  On the finite-projective-stabilizer locus,
the $\PGL_2$-orbits have dimension three.  Hence a Rosenlicht quotient of
a nonempty invariant open subset has dimension
\[
\dim\operatorname{Gr}(2,5)-\dim\PGL_2=6-3=3.
\]
Binary-quartic pencils are classified in \cite{Wall1998}.  For
quotient-orbit methods in the two-generator metabelian case, see
\cite{GavioliMontiYoung2001}.

\begin{proposition}[the metabelian Quot germ]
\label{prop:quartic-quot-germ}
Let $h=(1,2,3,4,2)$ be the internal rank vector of
$\fg_\circ'$, and let $q_\circ\in Q_h$ be its quotient point.  Formation
induces an isomorphism of formal schemes
\[
\widehat{Q_h}_{\,q_\circ}\simeq
\widehat{\mathcal P_{\mathrm{reg}}}_{\,F_\circ}.
\]
Consequently the fixed-$U$ Quot germ is smooth of dimension six, while
$H^0(T_{q_\circ}[Q_h/\GL(U)])$ has dimension three.
\end{proposition}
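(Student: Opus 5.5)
The plan is to compare the two moduli problems directly through the binary formation equivalence. Since $\dim U=2$, one has $B(U)=R\otimes\Delta_U$, so a point of $Q_h$ over a local Artin $\kk$-algebra $A$ is a homogeneous ideal $I_A\subset R_A$ with $R_A/I_A$ degreewise free of Hilbert function $h=(1,2,3,4,2)$. First I will show that every such $A$-point is level of socle degree $4$ and type $2$ with $I_A=\Ann(F_A)$ for a unique rank-two direct summand $F_A\subset\Sym^4U_A^\vee$. Second, I will show that $F_A$ lies in $\mathcal P_{\mathrm{reg}}(A)$. Conversely, every $A$-point $F_A$ of $\mathcal P_{\mathrm{reg}}$ based at $F_\circ$ gives the quotient $R_A/\Ann(F_A)$ with the right ranks. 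These two maps will give mutually inverse natural transformations of deformation functors on Artin rings. They then identify the formal germs by Proposition~\ref{prop:relative-inverse-systems}, applied with $W_A=R_A\circ(F_A\otimes\Delta_U^\vee)$.

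For the forward direction, set $F_A:=W_4$. Proposition~\ref{prop:relative-inverse-systems} makes $F_A$ an $A$-direct summand of rank $h_4=2$. Levelness, $W_A=R_A\circ W_4$, holds on the special fiber by Lemma~\ref{lem:level-socle}, since $\fg_\circ$ is level. For each $r<4$ the contraction map $U_A\otimes W_{r+1}\to W_r$ is then a map of finite free modules that is surjective mod $\mathfrak m_A$, hence surjective by Nakayama. So $W_A=R_A\circ F_A$ and $N_A=\Ann(F_A)\otimes\Delta_U$. Regularity of $F_A$ means that the joint third catalecticant $C^{(3)}_{F_A}:(R_A)_3\to\Hom(F_A,U_A^\vee)$ is invertible. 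It is a map between free modules of rank $4$, and its image is dual to $W_1$; it is invertible on the special fiber, so it is invertible over $A$. For the reverse direction, given $F_A\in\mathcal P_{\mathrm{reg}}(A)$, the dimensions of $R_{4-r}\circ F$ are $2,4,3,2,1$ for $r=0,\dots,4$. For $r=1$ this is exactly invertibility of the catalecticant; for $r=0,2,3$ it is the maximum possible value, which is attained at the special point and therefore nearby. The corresponding catalecticant maps therefore have constant rank. Hence their images $W_r$ are direct summands of the expected ranks, and $R_A/\Ann(F_A)$ has free graded pieces of ranks $h$. The two constructions are inverse because $W_A=R_A\circ W_4$ and $F_A=W_4$.

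Smoothness and the dimension count then follow from the target germ. $\mathcal P_{\mathrm{reg}}$ is open in $\operatorname{Gr}(2,\Sym^4U^*)$, which is smooth of dimension $2\cdot3=6$, so $\widehat{Q_h}_{q_\circ}$ is smooth of dimension six; in particular $\dim\Hom_R(N,C)_0=6$. For $H^0$, I will use the Euler characteristic in Corollary~\ref{cor:metabelian-smoothness}: $\dim H^0=6-4+\dim\Der_0(\fg_\circ)$. By Theorem~\ref{thm:graded-aut}, $\Der_0(\fg_\circ)$ is the Lie algebra of $\operatorname{Stab}_{\GL(U)}(F_\circ)$. This stabilizer is an extension of a finite group by the dilation torus $\mathbb G_m$, since the $\PGL_2$-stabilizer of $F_\circ$ is finite. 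I expect the main obstacle to be establishing this finiteness rigorously. The first approach I will try is to check directly that no nonzero traceless $a\in\mathfrak{sl}(U)$ preserves $F_\circ$. This is a small linear computation with $a$ acting on $S^4+T^4$ and $S^3T+ST^3$ modulo $F_\circ$. With finiteness in hand, $\Der_0$ is one-dimensional, spanned by the grading derivation, and $H^0$ has dimension $6-4+1=3$.
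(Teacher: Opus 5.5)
Your argument is correct and rests on the same identification as the paper's. A quotient point is determined by its top inverse-system piece $W_4=F\otimes\Lambda^2U^*$, and $H^0$ is the $6$-dimensional Grassmannian tangent modulo a $3$-dimensional orbit tangent. The routes differ in two places.

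\emph{The germ.} The paper needs neither Nakayama nor catalecticant ranks. Since $h_r=\dim B(U)_r=r+1$ for $r\le3$, over any base the kernel $N_r$ is projective of rank $0$ in these degrees. The incidence conditions $UN_r\subseteq N_{r+1}$ are therefore vacuous, and
\[
Q_h\simeq\operatorname{Gr}^{\mathrm{quot}}_2(B(U)_4)\simeq\operatorname{Gr}(2,\Sym^4U^*)
\]
globally. $\mathcal P_{\mathrm{reg}}$ is then just the open set where $R\circ(F\otimes\Lambda^2U^*)$ also fills the lower degrees, so that the level description of formation agrees with the Grassmannian one. This gives a global statement: non-regular pencils are still points of $Q_h$, only non-level ones. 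It also makes your levelness and rank steps automatic.

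\emph{The derivations.} Here your route is the more self-contained one. The paper invokes the finite projective stabilizer of Theorem~\ref{thm:exact-stabilizer}, proved later by a discriminant and deck-group argument. Your infinitesimal check is immediate and is all the Euler-characteristic count needs. If a traceless $a$ with $S\mapsto\alpha S+\gamma T$, $T\mapsto\beta S-\alpha T$ preserves $F_\circ$, then $a(S^4+T^4)$ forces $\alpha=0$ and $\beta=\gamma$. The $S^2T^2$-coefficient of $a(S^3T+ST^3)$ is then $6\beta$, which forces $\beta=0$.

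There are two slips to fix. First, $C^{(3)}_F$ has kernel $\Ann(F)_3$, so its image is dual to $W_3=R_1\circ F$ (rank $4$), not to $W_1$. Second, $2,4,3,2,1$ are the ranks of $R_r\circ F$, not of $R_{4-r}\circ F$; the case $r=4$ (rank $1$) is also maximal and should be listed. Separately, ``constant rank'' alone does not make an image a direct summand over an Artin base; think of multiplication by $\epsilon$ on $\kk[\epsilon]/(\epsilon^2)$. Your conclusion holds because each of these maps has maximal rank, hence is split injective or surjective.
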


\begin{proof}
Since $\dim U=2$,
\[
B(U)_r=R_r\otimes\Lambda^2U,\qquad \dim B(U)_r=r+1.
\]
Thus $h_r=\dim B(U)_r$ for $0\le r\le3$.  In the incidence description
of $Q_h$, the universal kernels in these degrees vanish.  Since the
quotient is zero above degree four, the only nontrivial datum is a
rank-two quotient of $B(U)_4$, and all incidence conditions are automatic:
\[
Q_h\simeq\operatorname{Gr}^{\mathrm{quot}}_2(B(U)_4).
\]
Dualizing and using
$B(U)_4^\vee=D_4\otimes\Lambda^2U^*$ identifies this Grassmannian with
$\operatorname{Gr}(2,\Sym^4U^*)$; the top inverse-system piece
corresponding to $F$ is
\[
W_4=F\otimes\Lambda^2U^*.
\]

On $\mathcal P_{\mathrm{reg}}$, the contraction
$U\otimes F\to D_3$ is an isomorphism, since its dual is the joint third
catalecticant.  Hence
$R\circ(F\otimes\Lambda^2U^*)$ has rank vector $h$, and formation agrees
with the Grassmannian identification.  Since
$\mathcal P_{\mathrm{reg}}$ is open and contains $F_\circ$, it induces the
displayed isomorphism of completions.  The dimension assertions follow
from $\dim\operatorname{Gr}(2,5)=6$ and the finite projective stabilizer.
\end{proof}

\begin{corollary}\label{cor:aut-dimension}
For $\fg_\circ$,
\[
\dim\operatorname{IAut}(\fg_\circ)=24,\qquad
\dim\Aut(\fg_\circ)=25.
\]
Its inner automorphism group has dimension $14-\dim Z(\fg_\circ)=12$.
\end{corollary}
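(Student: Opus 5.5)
We will compute both dimensions directly from the structure results for binary formation algebras. By Theorem~\ref{thm:full-aut}, $\operatorname{IAut}(\fg_\circ)\simeq A^2$ as a variety, where $A=A_{F_\circ}$, and the split exact sequence there gives
\[
\dim\Aut(\fg_\circ)=\dim\operatorname{IAut}(\fg_\circ)+\dim\operatorname{Stab}_{\GL(U)}(F_\circ).
\]

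The first step is to read off $\dim A$ from the Hilbert function $h(A_{F_\circ})=(1,2,3,4,2)$, which was computed from \eqref{eq:principal-ann}. This gives $\dim A=12$, hence $\dim\operatorname{IAut}(\fg_\circ)=2\cdot 12=24$.

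The second step is to show that $\operatorname{Stab}_{\GL(U)}(F_\circ)$ is one-dimensional. By Theorem~\ref{thm:graded-aut}, it is an extension of $\operatorname{Stab}_{\PGL(U)}(F_\circ)$ by the dilation torus $\mathbb G_m$. So it suffices to show that the projective stabilizer is finite. We have two routes.
\begin{enumerate}[label=\textup{(\alph*)}]
\item Use the finite projective stabilizer invoked in Proposition~\ref{prop:quartic-quot-germ}, which gives $H^0(T_{q_\circ}[Q_h/\GL(U)])$ of dimension $6-3=3$.
\item Argue directly at the Lie-algebra level. If $a\in\mathfrak{gl}(U)$ stabilizes $F_\circ$, the infinitesimal condition is linear in the four entries of $a$. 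A short check on the basis $S^4+T^4$, $4S^3T+4ST^3$ shows that only scalar $a$ survive. Equivalently, Theorem~\ref{thm:general-defect} gives $\Der_0(\fg_\circ)\simeq\ker(\mathrm d\operatorname{orb})$. Then Corollary~\ref{cor:metabelian-smoothness}, with $\dim\Hom_R(N,C)_0=6$ (the Grassmannian tangent) and $\dim\ker\Delta=3$, yields $\dim\Der_0=3-6+4=1$.
\end{enumerate}
Either route gives $\dim\Aut(\fg_\circ)=24+1=25$.

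For the inner automorphism group, we note that it is the image of $\fg_\circ$ under $\exp\ad$, since $\fg_\circ$ is nilpotent. Its dimension is therefore $\dim\fg_\circ-\dim Z(\fg_\circ)$. The center is computed as in the proof of Theorem~\ref{thm:binary-equivalence}. No nonzero element of $U$ is central, because $[X,Y]\neq0$. On the derived algebra, the center is $\Soc(A)\otimes\Lambda^2U$, and since $A$ is level of socle degree $4$, this is $A_4$, of dimension $2$. Hence $\dim Z(\fg_\circ)=2$ and the inner group has dimension $14-2=12$.

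The only non-bookkeeping step is the finiteness of the projective stabilizer, equivalently $\dim\Der_0(\fg_\circ)=1$. We would first try the linear computation for $a\in\mathfrak{gl}_2$ preserving $F_\circ$ directly.
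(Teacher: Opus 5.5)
Your proposal is correct and is essentially the paper's proof: $\dim A_{F_\circ}=12$ with Theorem~\ref{thm:full-aut} gives $\dim\operatorname{IAut}(\fg_\circ)=24$, the split sequence adds the one-dimensional group $\operatorname{Stab}_{\GL(U)}(F_\circ)$, and the center is $L_6$, so the inner group has dimension $12$. The paper reads off the stabilizer from Theorem~\ref{thm:exact-stabilizer}, and your direct infinitesimal check is a valid self-contained substitute, since it does force $a$ to be scalar; the dimension-formula variant in (b), however, borrows $\dim\ker\Delta=3$, which the paper itself derives via that theorem, so there you should instead cite the computed weight-zero rank $37$ of $d_1$, which gives $\dim\Der_0(\fg_\circ)=38-37=1$ directly.
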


\begin{proof}
Here $\dim A_{F_\circ}=12$.  Apply
Theorems~\ref{thm:graded-aut}, \ref{thm:full-aut}, and
\ref{thm:exact-stabilizer}.
\end{proof}

\subsection{Adjoint cohomology}
\label{sec:adjoint-cohomology}

Let $L=\bigoplus_{j=1}^6L_j=\fg_\circ$ and
$C^q(L;L)=\Hom(\Lambda^qL,L)$.  Write $C^q_w$ for the cochains satisfying
\[
f(L_{i_1},\ldots,L_{i_q})
\subseteq L_{i_1+\cdots+i_q+w}.
\]
The differential preserves weight; write
$Z^q_w=\ker(d:C^q_w\to C^{q+1}_w)$,
$B^q_w=\im(d:C^{q-1}_w\to C^q_w)$, and
$H^q_w=Z^q_w/B^q_w$.

We use the Nijenhuis--Richardson convention
\cite{NijenhuisRichardson1966,NijenhuisRichardson1967} for which
\[
[\mu,f]_{\NR}=-df\qquad(f\in C^2(L;L)),
\]
so the Maurer--Cartan equation is
\begin{equation}\label{eq:MC-convention}
-d\alpha+\frac12[\alpha,\alpha]_{\NR}=0.
\end{equation}

The displayed model has integral structure constants.  Flat base change
gives
\begin{equation}\label{eq:CE-base-change}
C^\bullet_w(\fg_{\circ,\mathbb Q};\fg_{\circ,\mathbb Q})
\otimes_{\mathbb Q}\kk
\simeq C^\bullet_w(\fg_\circ;\fg_\circ),\qquad
H^q_w(\fg_{\circ,\mathbb Q};\fg_{\circ,\mathbb Q})
\otimes_{\mathbb Q}\kk
\simeq H^q_w(\fg_\circ;\fg_\circ).
\end{equation}
We therefore compute over $\mathbb Q$.

\begin{theorem}[adjoint cohomology of $\fg_\circ$]
\label{thm:cohomology-table}
For $\fg_\circ$,
\[
\dim H^2_w=
\begin{cases}
19,&w=-2,\\
26,&w=-1,\\
11,&w=0,\\
4,&w=1,\\
0,&\text{otherwise},
\end{cases}
\qquad \dim H^2=60.
\]
Moreover,
\[
\dim H^3_w=
\begin{cases}
34,&w=-7,\\
60,&w=-6,\\
62,&w=-5,\\
39,&w=-4,\\
12,&w=-3,\\
2,&w=-2,\\
0,&\text{otherwise},
\end{cases}
\qquad \dim H^3=209.
\]
\end{theorem}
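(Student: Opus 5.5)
The table is an exact finite linear-algebra computation, and I would organize it so that every number is a rank over $\mathbb Q$. By the flat base change \eqref{eq:CE-base-change} it suffices to work with the integral model $\fg_{\circ,\mathbb Q}$. The grading makes the Chevalley--Eilenberg complex split as $C^\bullet=\bigoplus_w C^\bullet_w$, and each weight piece is finite dimensional. I would therefore compute
\[
\dim H^q_w=\dim C^q_w-\rank(d:C^q_w\to C^{q+1}_w)-\rank(d:C^{q-1}_w\to C^q_w)
\]
for $q=2,3$ and every weight $w$ in the finite range where $C^q_w\neq0$.

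Concretely, the steps are as follows.
\begin{enumerate}
\item Fix the homogeneous basis of $\fg_\circ$ and its structure constants, obtained from $[X,Y]=Z_0$, $[X,Z_p]=Z_{xp}$, $[Y,Z_p]=Z_{yp}$ together with the reductions modulo \eqref{eq:principal-ann}. For example, $Z_{x^4}=Z_{y^4}=T_0$ and $Z_{x^3y}=Z_{xy^3}=T_1$ up to the chosen normalization, while $Z_{x^2y^2}=0$.
\item Enumerate the weight-homogeneous basis cochains of $C^1_w$, $C^2_w$, $C^3_w$ and $C^4_w$ using the layer dimensions $(2,1,2,3,4,2)$.
\item Assemble the sparse integer matrices of $d$ restricted to each weight.
\item Compute their ranks exactly, by fraction-free elimination over $\mathbb Q$ or by modular ranks checked at several large primes together with a rational certificate.
\end{enumerate}
Since the maximal weight of $C^q_w$ is bounded by $6-q$ and the minimal by $6-6q$, the weight ranges are finite. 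Outside the listed weights one only has to verify that the rank equations give zero.

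I would add internal consistency checks before trusting the table.
\begin{itemize}
\item $H^0=Z(\fg_\circ)$ should be $2$-dimensional, concentrated in the weights of $L_6$.
\item $H^1_0=\Der_0(\fg_\circ)/\ad(\text{weight }0)$ should match the stabilizer computation: $\dim\Der_0$ is the dimension of the Lie algebra of $\operatorname{Stab}_{\GL(U)}(F_\circ)$, which is $1$, coming from the dilations.
\item The Euler characteristic in each weight, $\sum_q(-1)^q\dim C^q_w=\sum_q(-1)^q\dim H^q_w$, should hold once all degrees are computed.
\item For weight zero, Theorem~\ref{thm:general-defect} and Corollary~\ref{cor:metabelian-smoothness} predict $\dim\ker\Delta=\dim\Hom_R(N,C)_0-4+\dim\Der_0=6-4+1=3$. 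The remaining $8$ dimensions of $H^2_0$ should be detected by restriction to $\Lambda^2\fg'$, which is a useful partial check on the value $11$.
\end{itemize}

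The main obstacle is purely the size and exactness of the rank computations for $H^3$. For instance, $\dim C^3=\binom{14}{3}\cdot14$ is in the thousands, and the weights $-7$ to $-4$ produce sizable matrices. Floating-point ranks are unacceptable here, so I would do exact rational elimination weight by weight, which keeps the blocks moderate, and record pivot data. This is what the paper's Appendix~\ref{app:verification} certificate is for.

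To reduce the load I would exploit the $\mathbb Z^2$-multigrading. The pencil is not monomial, so it gives only a coarser grading, but the $V_4$ symmetry can split each weight block further by isotypic components. I would then cross-check the resulting dimensions against a second, independent implementation, such as a computer algebra system's Lie algebra cohomology routine.
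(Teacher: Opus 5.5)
Your plan is the paper's proof: split the Chevalley--Eilenberg complex by Carnot weight, use $\dim H^q_w=\dim C^q_w-\rank d_{q-1}-\rank d_q$ with every rank computed by exact fraction-free elimination over $\mathbb Q$, and pass to $\kk$ by flat base change; your consistency checks are extras rather than part of the argument. One slip needs fixing: the lowest weight in $C^q$ is $1-6q$, not $6-6q$. For example, $\dim C^2_{-7}=94$, and the rank-$94$ differential $C^2_{-7}\to C^3_{-7}$ is needed to obtain $\dim H^3_{-7}=632-94-504=34$; the paper's blocks run down to $w=-16$. So enumerate the weights by checking directly where $C^q_w\neq0$, not by that bound.
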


\begin{proof}
In each weight,
\[
\dim H^2_w=\dim C^2_w-\rank d_1-\rank d_2,\qquad
\dim H^3_w=\dim C^3_w-\rank d_2-\rank d_3.
\]
All ranks are obtained over $\mathbb Q$ by fraction-preserving sparse
Gaussian elimination.  Table~\ref{tab:ce-certificate} records the blocks
that contribute nonzero cohomology; the source archive records every
block, pivot set, and check that $d_{q+1}d_q=0$.
\end{proof}

The weight-zero block has
\[
(\dim C^1_0,\dim C^2_0,\dim C^3_0)=(38,73,25)
\]
and
\[
(\rank d_1,\rank d_2,\rank d_3)=(37,25,0).
\]
Therefore
\begin{equation}\label{eq:H20H30}
\dim H^2_0=11,\qquad H^3_0=0.
\end{equation}

\begin{proposition}[generic weight-zero cohomology]
\label{prop:generic-H20}
On a nonempty Zariski-open subset of the regular
finite-projective-stabilizer pencil locus,
\[
\dim H^2_0=11,\qquad H^3_0=0.
\]
\end{proposition}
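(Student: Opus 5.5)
The plan is to use semicontinuity over the family of Lie algebras attached to regular pencils, together with the exact computation at $F_\circ$ in \eqref{eq:H20H30}. Over $\mathcal P_{\mathrm{reg}}$ the ideal $\Ann(F)$ has Hilbert function $(1,2,3,4,2)$ for every $F$; this follows from Proposition~\ref{prop:quartic-quot-germ}, because the contraction $U\otimes F\to D_3$ is an isomorphism there. So the graded layers of $\fg_F$ have constant dimensions $(2,1,2,3,4,2)$. After choosing local frames, I will realize $\fg_F$ as a fixed graded vector space $L$ carrying a bracket $\mu_F$ whose structure constants are regular functions on an affine open $V\subset\mathcal P_{\mathrm{reg}}$ containing $F_\circ$. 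The frames come from the universal quotient bundle on the Grassmannian, which is locally free by Proposition~\ref{prop:relative-inverse-systems}. The weight-zero Chevalley--Eilenberg spaces $C^q_0(L;L)$ do not depend on $F$, and their dimensions $(38,73,25)$ in degrees $1,2,3$ are fixed. The differentials $d_1,d_2,d_3$ are then matrices with entries in $\mathcal O(V)$.

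Lower semicontinuity of rank means each set $\{\rank d_q\ge r\}$ is Zariski open. Since
\[
\dim H^2_0=73-\rank d_1-\rank d_2,\qquad \dim H^3_0=25-\rank d_2-\rank d_3,
\]
the set where $\rank d_1\ge 37$, $\rank d_2\ge 25$ and $\rank d_3\ge 0$ is open, and at these minimal ranks $\dim H^3_0=0$. I will also need the upper bounds $\rank d_1\le 37$ and $\rank d_2\le 25$ everywhere on $V$.

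For $d_1$ I will use $\ker d_1=\Der_0(\fg_F)$. By Theorem~\ref{thm:graded-aut} this is the Lie algebra of $\operatorname{Stab}_{\GL(U)}(F)$, which has dimension at least one because of the Carnot dilations. Hence $\rank d_1\le 37$, with equality exactly where the projective stabilizer is finite. For $d_2$ the bound is automatic, since $\rank d_2\le\dim C^3_0=25$. So on the open set where the ranks are $\ge(37,25)$ they are exactly $(37,25,\cdot)$, and this gives $\dim H^2_0=73-37-25=11$ and $H^3_0=0$.

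The main obstacle is showing that this open set is nonempty and meets the finite-stabilizer locus. Nonemptiness is handled by $F_\circ$ itself, by Theorem~\ref{thm:cohomology-table} and the exact ranks $(37,25,0)$ recorded after it. Flat base change \eqref{eq:CE-base-change} transfers the rational computation to $\kk$. The point $F_\circ$ has a finite ($V_4$-type) projective stabilizer, as cited to Theorem~\ref{thm:exact-stabilizer}. Intersecting with the open finite-stabilizer locus, which contains $F_\circ$, gives a nonempty open subset.

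One technical point needs checking: the local trivialization must be chosen compatibly with the grading, so that $C^q_0$ really is a constant bundle. I would do this by trivializing each layer $L_{r+2}\simeq(A_F)_r\otimes\Lambda^2U$ over $V$ by monomial representatives that stay a basis near $F_\circ$. In degrees $r\le 3$ these are all monomials, and in degree $4$ I take two monomials, such as $x^4$ and $x^3y$, independent modulo $\Ann(F)_4$ near $F_\circ$. The structure constants are then polynomial in the Plücker coordinates on this chart.
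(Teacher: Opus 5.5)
Your proposal is correct and follows the paper's argument: a finite projective stabilizer forces $\dim\Der_0(\fg_F)=1$, hence $\rank d_1=37$, while surjectivity of $d_2$ onto the $25$-dimensional space $C^3_0$ at $F_\circ$ is an open condition. Your explicit graded trivialization over a Grassmannian chart (monomial representatives, with $\bar x^4,\bar x^3y$ spanning the top layer near $F_\circ$) makes precise the constant-cochain-space setup that the paper's one-line proof takes for granted.
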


\begin{proof}
A finite projective stabilizer gives $\dim\Der_0(\fg_F)=1$, hence
$\rank d_1=37$.  At $F_\circ$, $d_2$ is surjective; openness of maximal
matrix rank proves the claim.
\end{proof}

\subsection{\texorpdfstring{The pencil-motion slice in $H^2_0$}
{The pencil-motion slice in weight-zero cohomology}}
\label{sec:pencil-motion}

We now compute the Kodaira--Spencer image of the quartic-pencil family in
the $11$-dimensional space $H^2_0$.

Write a quartic pencil in divided-power coordinates as the row span of
\[
Q=(q_{ja})\in M_{2\times5}(\kk),
\qquad 0\le j\le1,\quad 0\le a\le4,
\]
where $q_{ja}$ is the coefficient of
$\binom{4}{a}S^{4-a}T^a$ in row $j$.  On the fixed regular
$14$-dimensional graded vector space, only multiplication into the top
layer varies:
\[
x^{4-a}y^a\longmapsto q_{0a}T_0+q_{1a}T_1.
\]
For a coefficient tangent $\dot Q=(\dot q_{ja})$, define a weight-zero
$2$-cochain by
\begin{equation}\label{eq:coefficient-cocycle}
\begin{aligned}
\phi_{\dot Q}(X,Z_{x^{3-i}y^i})
  &=\sum_{j=0}^1\dot q_{j,i}T_j,\\
\phi_{\dot Q}(Y,Z_{x^{3-i}y^i})
  &=\sum_{j=0}^1\dot q_{j,i+1}T_j,
\qquad 0\le i\le3,
\end{aligned}
\end{equation}
and set all its other values to zero.

\begin{proposition}[pencil-motion tangent space]
\label{prop:pencil-slice}
Every $\phi_{\dot Q}$ is a cocycle.  At $F_\circ$, the span of their
classes in $H^2_0(\fg_\circ;\fg_\circ)$ has dimension exactly three.
It is the image of
\[
T_{F_\circ}\operatorname{Gr}(2,\Sym^4U^*)/
T_{F_\circ}(\PGL(U)\cdot F_\circ)
\longrightarrow H^2_0(\fg_\circ;\fg_\circ).
\]
\end{proposition}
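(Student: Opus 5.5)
The plan is to realize each $\phi_{\dot Q}$ as the Kodaira--Spencer class of the family of regular pencils with coefficient matrix $Q_\circ+\epsilon\dot Q$, and then to read off the span of these classes from Theorem~\ref{thm:general-defect} together with Proposition~\ref{prop:quartic-quot-germ}.

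\textbf{Step 1: cocycle property.} On the fixed $14$-dimensional graded space, the regular locus carries a bracket $\mu_Q$. It agrees with $\mu_\circ$ except for the products $L_1\times L_5\to L_6$. Concretely, $[X,Z_{x^{3-i}y^i}]$ is the top-layer image of $x^{4-i}y^i$, and $[Y,Z_{x^{3-i}y^i}]$ is that of $x^{3-i}y^{i+1}$. With the identification $x^{4-a}y^a\mapsto q_{0a}T_0+q_{1a}T_1$ these are linear in $Q$. Hence $\mu_Q=\mu_\circ+\phi_{Q-Q_\circ}$, and $\phi_{\dot Q}=\frac{d}{d\epsilon}\mu_{Q_\circ+\epsilon\dot Q}|_{\epsilon=0}$.

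Two points need care. First, the coordinate description must be compatible with divided-power contraction. I would check this directly: the rows of $Q$ pair with $R_4\otimes\Lambda^2U$, and this pairing is exactly the identification $B(U)_4\twoheadrightarrow W_4^\vee$. Second, every $\mu_Q$ must satisfy Jacobi. This follows from Theorem~\ref{thm:binary-equivalence}: for $Q$ regular, $\mu_Q$ is the bracket of $\fg_F$ written in a frame adapted to $R_{\le3}\otimes\Lambda^2U$, and that frame is fixed because $h_r=\dim B(U)_r$ for $r\le3$. Since $\phi_{\dot Q}$ is a derivative of Lie brackets, it is a cocycle. It has weight zero, and it vanishes on $\Lambda^2\mathfrak d$ because it is only nonzero on $L_1\times L_5$.

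\textbf{Step 2: identification with the Quot map.} By Proposition~\ref{prop:quartic-quot-germ}, $T_{q_\circ}Q_h\simeq T_{F_\circ}\operatorname{Gr}(2,\Sym^4U^*)=\Hom(F_\circ,\Sym^4U^*/F_\circ)$. The coefficient tangent $\dot Q$ maps onto this space, with kernel the row operations $\dot Q=gQ_\circ$, $g\in\mathfrak{gl}_2$. I would check that $\phi_{\dot Q}$ coincides, up to sign and a coboundary, with the cocycle $\Phi_s(\eta)$ built in the proof of Theorem~\ref{thm:general-defect}. Take the splitting $s$ to be the identity in degrees $\le3$ and the section of $B(U)_4\to C_4$ induced by the frame $T_0,T_1$. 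Then $\kappa_s$ is nonzero only on $L_1\times L_5$, with values in $N_4$, so $\Phi_s(\eta)$ is supported exactly where $\phi_{\dot Q}$ is. Comparing the graph maps shows the two agree once $\eta$ is the Grassmannian tangent of $\dot Q$.

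Row operations $gQ_\circ$ give $\phi$'s that are coboundaries $d f$, where $f=g$ acts on $L_6$. This matches the fact that they do not move the point of the Grassmannian.

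\textbf{Step 3: dimension count.} By \eqref{eq:defect-quot-identification}, the map $\Theta$ descends to an injection of $\operatorname{coker}(\mathfrak{gl}(U)\to T_{q_\circ}Q_h)$ into $H^2_0$. By construction $\mathrm d\operatorname{orb}$ is the infinitesimal $\GL(U)$-action on the Grassmannian, so this cokernel is exactly $T_{F_\circ}\operatorname{Gr}/T_{F_\circ}(\PGL(U)\cdot F_\circ)$.

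Its dimension is $6-3=3$, provided the orbit map has $3$-dimensional image, that is, $\Der_0(\fg_\circ)$ is $1$-dimensional (only the scalars survive). This follows from Theorem~\ref{thm:graded-aut} and the finiteness of the projective stabilizer, consistent with $\rank d_1=37=38-1$. Alternatively it follows from Corollary~\ref{cor:metabelian-smoothness}: $5\cdot\ldots$ computed as $\dim\Hom_R(N,C)_0=6$, so $6-4+1=3$. The span of the classes $[\phi_{\dot Q}]$ is therefore exactly three-dimensional and equals the stated image.

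\textbf{Main obstacle.} The hardest part is Step 2: matching the explicit cochain $\phi_{\dot Q}$, with its index shift between the $X$ and $Y$ rows, to $\Phi_s(\eta)$ with correct signs and divided-power normalizations. If that bookkeeping becomes messy, I would fall back on a direct exact computation. Map the $10$-dimensional $\dot Q$-space to $H^2_0$ using the weight-zero pivot data of Theorem~\ref{thm:cohomology-table}, and verify that the rank is $3$ and that the kernel contains the $4$ row directions and the $3$ $\mathfrak{sl}_2$-orbit directions.
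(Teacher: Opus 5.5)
Your proposal is correct and follows essentially the paper's route: the cocycle property comes from linearizing the top-layer brackets, row operations and $\PGL(U)$-motions give coboundaries, and Theorem~\ref{thm:general-defect} together with Proposition~\ref{prop:quartic-quot-germ} and $\dim\Der_0(\fg_\circ)=1$ (finite projective stabilizer) gives $6-3=3$. Your explicit check that $\phi_{\dot Q}$ agrees with $\Phi_s(\eta)$ for $\eta=-\dot q|_{N_4}$, modulo a coboundary supported on $L_6$, is a step the paper leaves implicit; one small correction is that $\kappa_s$ is also nonzero on $L_1\times L_6$ (with values in $N_5$), but this is harmless because $\eta$ kills $N_5$ when $C_5=0$.
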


\begin{proof}
The two rules in \eqref{eq:coefficient-cocycle} are the linearization of
the commuting $x$- and $y$-actions into the central top layer.  Hence the
linearized Jacobi identity is satisfied.  Equivalently, direct application
of the Chevalley--Eilenberg differential gives
$d\phi_{\dot Q}=0$ for all ten coefficient directions.

Infinitesimal row operations do not move the point of the Grassmannian, and
infinitesimal $\PGL(U)$-motions are induced by changes of generators, hence
give coboundaries.  Conversely,
Theorem~\ref{thm:general-defect} and
Proposition~\ref{prop:quartic-quot-germ} identify this quotient tangent
with the metabelian subspace of $H^2_0$.  Its dimension is
$6-3=3$ by Theorem~\ref{thm:exact-stabilizer}.
\end{proof}

At $F_\circ$, define cochains $\psi_0,\psi_1,\psi_2$ by the
following nonzero values:
\begin{equation}\label{eq:psi-basis}
\begin{aligned}
\psi_0(X,Z_{x^3})&=T_0,\\
\psi_1(X,Z_{x^2y})&=T_0,
&\psi_1(Y,Z_{x^3})&=T_0,\\
\psi_2(X,Z_{xy^2})&=T_0,
&\psi_2(Y,Z_{x^2y})&=T_0.
\end{aligned}
\end{equation}
They correspond respectively to varying the first row by
$S^4$, $4S^3T$, and $6S^2T^2$.

\begin{theorem}[affine integrability of pencil motion]
\label{thm:affine-slice}
The classes $[\psi_0],[\psi_1],[\psi_2]$ form a basis of the pencil-motion
image, and
\[
[\psi_i,\psi_j]_{\NR}=0
\]
as $3$-cochains for every $i,j$.  Consequently,
\[
\mu_{\mathbf t}
=\mu+\sum_{i=0}^2t_i\psi_i
\]
is a Lie bracket for every $\mathbf t\in\kk^3$.
\end{theorem}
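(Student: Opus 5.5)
Each $\psi_i$ is a special case of the coefficient cocycles $\phi_{\dot Q}$ of \eqref{eq:coefficient-cocycle}, and all three take values in the central top layer $L_6=\langle T_0,T_1\rangle$ on arguments from $L_1\times L_5$. First identify each $\psi_i$ with a concrete $\dot Q$. Take $\dot q_{0,0}=1$ for $\psi_0$, $\dot q_{0,1}=1$ for $\psi_1$, and $\dot q_{0,2}=1$ for $\psi_2$, with all other entries zero. The pattern $\phi(X,Z_{x^{3-i}y^i})=\dot q_{\cdot,i}$, $\phi(Y,Z_{x^{3-i}y^i})=\dot q_{\cdot,i+1}$ then reproduces exactly the values listed in \eqref{eq:psi-basis}. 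Proposition~\ref{prop:pencil-slice} then gives that they are cocycles and that their classes lie in the pencil-motion image. That image is three-dimensional, so it remains to show that $[\psi_0],[\psi_1],[\psi_2]$ are linearly independent in $H^2_0$.

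For independence I would proceed as follows. The three directions $S^4,4S^3T,6S^2T^2$ added to the first row span a subspace of $T_{F_\circ}\operatorname{Gr}(2,\Sym^4U^*)=\Hom(F_\circ,\Sym^4U^*/F_\circ)$. By Proposition~\ref{prop:pencil-slice}, the map to $H^2_0$ factors through the quotient by the tangent space to the $\PGL(U)$-orbit. It therefore suffices to check that these three vectors, together with the three-dimensional orbit tangent $\mathfrak{sl}(U)\cdot F_\circ$, span all six dimensions. Concretely, $\mathfrak{sl}_2$ acting on $f_0=S^4+T^4$ and $f_1=4S^3T+4ST^3$ gives three explicit homomorphisms $F_\circ\to\Sym^4/F_\circ$. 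The $6\times 6$ determinant is a routine computation in the basis $S^4,\dots,T^4$ modulo $F_\circ$. Injectivity of the induced map from the quotient is part of Proposition~\ref{prop:pencil-slice}, since the dimension there is exactly three. This determinant is the step where an unlucky choice could fail. If it did fail, I would fall back on exhibiting a weight-zero cycle pairing, or on the archive rank check.

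For the bracket vanishing, I would use the Nijenhuis--Richardson formula $[\alpha,\beta]_{\NR}(a,b,c)=\sum_{\mathrm{cyc}}\bigl(\alpha(\beta(a,b),c)+\beta(\alpha(a,b),c)\bigr)$, up to signs. Every $\psi_j$ has image in $L_6$, and every $\psi_i$ is nonzero only when one argument lies in $L_5$ and the other is $X$ or $Y$. So a term $\psi_i(\psi_j(a,b),c)$ would need $\psi_j(a,b)\in L_6$ to be fed into $\psi_i$. But $\psi_i$ vanishes whenever an argument lies in $L_6$, because its only nonzero entries have an $L_5$ argument paired with $L_1$. Hence every term vanishes and $[\psi_i,\psi_j]_{\NR}=0$ identically as cochains.

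For the conclusion, set $\alpha=\mu_{\mathbf t}-\mu=\sum t_i\psi_i$. The Jacobi identity for $\mu+\alpha$ is equivalent to the Maurer--Cartan equation \eqref{eq:MC-convention}. The linear term $d\alpha$ vanishes because each $\psi_i$ is a cocycle, and the quadratic term vanishes by the bracket computation. Since the equation has no terms beyond quadratic order, $\mu_{\mathbf t}$ is a Lie bracket for every $\mathbf t\in\kk^3$.
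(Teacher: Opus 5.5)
Your proof is correct. The bracket vanishing and the Maurer--Cartan conclusion are exactly the paper's argument, but your linear-independence step takes a different route.

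The paper proves independence directly in $C^2_0$ by exact elimination. The $73\times41$ matrix $[d_1\mid\psi_0\mid\psi_1\mid\psi_2]$ has successive ranks $37,38,39,40$, so the $\psi_i$ are independent modulo $B^2_0$ without any appeal to the Quot/defect theory. You instead move the question to $T_{F_\circ}\operatorname{Gr}(2,\Sym^4U^*)$ and use the injectivity of the map from the quotient by the orbit tangent into $H^2_0$. That injectivity comes from Proposition~\ref{prop:pencil-slice}, and ultimately from the injectivity half of Theorem~\ref{thm:general-defect}.

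The unlucky determinant you worried about does not occur. The $\psi$-directions already span the homomorphisms $F_\circ\to\Sym^4U^*/F_\circ$ that kill $f_1$. So your $6\times6$ determinant reduces to whether $\mathfrak{sl}(U)\cdot f_1$ spans $\Sym^4U^*/F_\circ$. It does: modulo $F_\circ$, the vector fields $S\partial_S-T\partial_T$, $S\partial_T$, $T\partial_S$ send $f_1$ to $16S^3T$, $4S^4+12S^2T^2$, and $12S^2T^2-4S^4$, which are independent.

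Your route gives a hand-checkable argument. It also shows directly that the orbit tangent is three-dimensional, so you could cite Theorem~\ref{thm:general-defect} instead of the dimension count, which rests on Theorem~\ref{thm:exact-stabilizer}. The paper's route is independent of the general theory, so the same computation doubles as a consistency check on Proposition~\ref{prop:pencil-slice}, at the cost of relying on a machine certificate.
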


\begin{proof}
Exact elimination gives
\[
\dim\bigl(B^2_0+\langle\psi_0,\psi_1,\psi_2\rangle\bigr)
-\dim B^2_0=3.
\]
Every insertion $\psi_i\circ\psi_j$ vanishes: the inner value lies in
$L_6$, whereas each $\psi_i$ is zero whenever one input lies in $L_6$.
The Maurer--Cartan equation \eqref{eq:MC-convention} is therefore
satisfied.
\end{proof}

\begin{corollary}\label{cor:codim-eight}
The pencil-motion subspace $P$ accounts for three of the eleven weight-zero
infinitesimal deformations and has codimension eight in $H^2_0$.
\end{corollary}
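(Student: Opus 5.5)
The corollary is a dimension count that combines Theorem~\ref{thm:cohomology-table} with Proposition~\ref{prop:pencil-slice}. First, the weight-zero computation \eqref{eq:H20H30} gives $\dim H^2_0(\fg_\circ;\fg_\circ)=11$. Second, Proposition~\ref{prop:pencil-slice} says that the span $P$ of the pencil-motion classes $[\phi_{\dot Q}]$ in $H^2_0$ has dimension exactly three. Theorem~\ref{thm:affine-slice} gives a concrete basis of $P$, namely $[\psi_0],[\psi_1],[\psi_2]$, whose independence modulo $B^2_0$ is checked there by exact elimination. From these two facts, $P$ is a $3$-dimensional subspace of the $11$-dimensional space $H^2_0$, so its codimension is $11-3=8$.

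To justify calling $P$ the space of metabelian (pencil-motion) deformations, I would also note that $P=\ker\Delta_{\fg_\circ}$. Theorem~\ref{thm:general-defect} identifies $\ker\Delta_{\fg_\circ}$ with $H^0(T_{q_\circ}[Q_h/\GL(U)])$. By Proposition~\ref{prop:quartic-quot-germ} this space is $3$-dimensional: the Quot germ is the $6$-dimensional Grassmannian germ, and the orbit directions cut it by $\dim\PGL(U)=3$. This uses that the stabilizer is finite, so $\Der_0(\fg_\circ)$ is spanned by the Carnot dilation.

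For the inclusion $P\subseteq\ker\Delta_{\fg_\circ}$: each $\phi_{\dot Q}$ is nonzero only on pairs with an entry in $L_1$, so it vanishes on $\Lambda^2\mathfrak d$. Equality then follows from the matching dimensions. In particular, the codimension $8$ equals the rank of $\Delta_{\fg_\circ}$.

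No step is a genuine obstacle, since all inputs are already established. The only point needing care is that the dimension $3$ in Proposition~\ref{prop:pencil-slice} really is measured in cohomology, that is modulo $B^2_0$, and not as cochains. The exact elimination cited in Theorem~\ref{thm:affine-slice} settles this.
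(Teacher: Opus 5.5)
Your proposal is correct and matches the paper's (implicit) argument: the corollary is just the count $\dim H^2_0-\dim P=11-3=8$, using \eqref{eq:H20H30} together with Proposition~\ref{prop:pencil-slice} and Theorem~\ref{thm:affine-slice}, where the latter certifies independence of $[\psi_0],[\psi_1],[\psi_2]$ modulo $B^2_0$. Your additional identification $P=\ker\Delta_{\fg_\circ}$ is sound but not needed here; the paper establishes it separately as Corollary~\ref{cor:metabelian-tangent}.
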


\section{Local graded moduli, symmetry, and obstructions}
\label{sec:local-quartic-geometry}

\subsection{Metabelian tangent and terminal defect}
\label{sec:metabelian-tangent}

Let $\mathfrak d=\fg_\circ'$ and let
\[
P=\operatorname{span}\{[\psi_0],[\psi_1],[\psi_2]\}
\subset H^2_0(\fg_\circ;\fg_\circ)
\]
be the pencil-motion subspace.  The following specializes the general
defect theorem; compare the linearized solvable-law schemes of
\cite{BarrionuevoTiraoSulca2023}.

\begin{corollary}[quartic-pencil metabelian tangent]
\label{cor:metabelian-tangent}
Write $\Delta=\Delta_{\fg_\circ}$ for the general defect map
\[
\Delta:H^2_0(\fg_\circ;\fg_\circ)
\longrightarrow\Hom(\Lambda^2\mathfrak d,\fg_\circ)_0.
\]
Then
\[
\ker\Delta=P,\qquad\rank\Delta=8.
\]
Thus a weight-zero first-order deformation is metabelian if and only if
its class belongs to the pencil-motion subspace.
\end{corollary}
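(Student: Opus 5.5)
The plan is to apply Theorem~\ref{thm:general-defect} to $\fg_\circ$ and compare with the pencil-motion computation. By \eqref{eq:defect-quot-identification}, $\ker\Delta$ is identified with $H^0(T_{q_\circ}[Q_h/\GL(U)])$. Proposition~\ref{prop:quartic-quot-germ} gives the dimension of this space as three. Since $\dim H^2_0=11$ by \eqref{eq:H20H30}, rank--nullity yields $\rank\Delta=11-3=8$ once the kernel is identified.

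It remains to show $\ker\Delta=P$, and here I will first prove $P\subseteq\ker\Delta$. Each $\psi_i$ from \eqref{eq:psi-basis} is nonzero only on pairs $(X\text{ or }Y,\,Z_p)$ with one entry in $L_1$. So $\psi_i|_{\Lambda^2\mathfrak d}=0$, and hence $\Delta[\psi_i]=0$; the map $\Delta$ is well defined on classes by Theorem~\ref{thm:general-defect}. More conceptually, the $\phi_{\dot Q}$ of \eqref{eq:coefficient-cocycle} are exactly the images $\Phi_s(\eta)$ of the comparison map $\Theta$ constructed in the proof of Theorem~\ref{thm:general-defect}. This holds because varying $Q$ moves only the kernel $N_4\subset B(U)_4$, that is, it moves the Quot point. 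By Theorem~\ref{thm:affine-slice}, $P$ is three-dimensional and sits inside a three-dimensional kernel, so the two coincide.

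The main obstacle is making sure the dimension count of $\ker\Delta$ is honest rather than circular. Proposition~\ref{prop:pencil-slice} already cites Theorem~\ref{thm:general-defect} for the identification, so I would instead use Corollary~\ref{cor:metabelian-smoothness} directly. By Proposition~\ref{prop:quartic-quot-germ}, $\dim\Hom_R(N,C)_0=\dim T_{q_\circ}Q_h=6$, since the Quot scheme is $\operatorname{Gr}(2,5)$ near $q_\circ$. We have $(\dim U)^2=4$. Also $\dim\Der_0(\fg_\circ)=\dim\mathfrak{stab}_{\mathfrak{gl}(U)}(F_\circ)=1$, because the projective stabilizer is finite ($V_4$, Theorem~\ref{thm:exact-stabilizer}) and only the Carnot dilation survives, by Theorem~\ref{thm:graded-aut}. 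This gives $\dim\ker\Delta=6-4+1=3$. As an independent check, one can note that the exact-elimination certificate computes $\rank\Delta=8$ directly: restrict a basis of $Z^2_0$ modulo $B^2_0$ to $\Lambda^2\mathfrak d$. In weight zero only the components $L_2\otimes L_4\to L_6$ and $\Lambda^2L_3\to L_6$ can be nonzero, with dimensions $8\cdot 2/2$ and $3\cdot2$ respectively, giving the target $E_{\mathrm{top}}$ of dimension $4+4\cdot$... This direct rank computation would confirm surjectivity onto the $8$-dimensional space $\Hom(L_2\otimes L_4,L_6)\oplus\Hom(\Lambda^2L_3,L_6)$, which has dimension $2\cdot4\cdot... $ equal to $8$ as recorded in Theorem~\ref{thm:quartic-synthesis}. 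The final sentence of the statement then follows because, by \eqref{eq:linearized-metabelian-general}, a first-order deformation is metabelian precisely when its restriction to $\Lambda^2\mathfrak d$ vanishes.
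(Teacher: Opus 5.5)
Your main argument is correct, but it identifies $\ker\Delta$ with $P$ by a different route than the paper.

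The paper chains identifications:
\begin{itemize}
\item Theorem~\ref{thm:general-defect} gives $\ker\Delta\simeq H^0$ of the Quot-stack tangent complex.
\item Proposition~\ref{prop:quartic-quot-germ} rewrites this as the Grassmannian tangent modulo changes of generators.
\item Proposition~\ref{prop:pencil-slice} identifies that space with the pencil-motion image, which is $P$ by the basis statement of Theorem~\ref{thm:affine-slice}.
\item The rank then follows from $\dim H^2_0=11$.
\end{itemize}
You use the general theory only to get the number $\dim\ker\Delta=6-4+1=3$. You then prove $P\subseteq\ker\Delta$ by inspection: every nonzero value of each $\psi_i$ has an argument in $L_1$, and $\Delta$ is well defined on classes. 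You close by comparing with $\dim P=3$ from the exact elimination.

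This avoids the Kodaira--Spencer identification in Proposition~\ref{prop:pencil-slice} entirely. You never need to know how the comparison map acts on coefficient tangents. Your aside that the $\phi_{\dot Q}$ are exactly its images holds only at the level of classes, since the cochains differ by a splitting-dependent coboundary, but you do not use it. The price is that you rely on the computed independence of the $[\psi_i]$. The paper's route additionally shows conceptually that $P$ is the image of pencil motion. Your circularity worry is also unnecessary: Theorem~\ref{thm:general-defect} is proved in full generality, so routing through Proposition~\ref{prop:pencil-slice}, as the paper does, is legitimate.

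You should delete the closing \emph{independent check}, because it is wrong as written.
\begin{itemize}
\item In weight zero, $\Hom(\Lambda^2\mathfrak d,\fg_\circ)_0$ also contains $\Hom(L_2\otimes L_3,L_5)$, of dimension $1\cdot2\cdot4=8$. So the target of $\Delta$ is $16$-dimensional, and $\Delta$, of rank $8$, is not surjective.
\item The two terminal pieces have dimensions $1\cdot3\cdot2=6$ and $1\cdot2=2$, not $8\cdot2/2$ and $3\cdot2$.
\item Surjectivity onto the $8$-dimensional $E_{\mathrm{top}}$ is a statement about the truncation $\Delta_{\mathrm{top}}$, namely Proposition~\ref{prop:terminal-defect}.
\item On cocycles, the $L_2\otimes L_3\to L_5$ component is determined by the terminal components through the cocycle identity on $L_1\otimes L_2\otimes L_3$. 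However, it does not vanish in general.
\end{itemize}
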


\begin{proof}
Theorem~\ref{thm:general-defect} identifies $\ker\Delta$ with $H^0$ of the
graded Quot-stack tangent complex.  By
Proposition~\ref{prop:quartic-quot-germ}, this is the tangent to
$\operatorname{Gr}(2,\Sym^4U^*)$ modulo changes of generators.
Proposition~\ref{prop:pencil-slice} identifies that three-dimensional
space with $P$.  Since $\dim H^2_0=11$, the defect has rank eight.
\end{proof}

The defect is already detected in the terminal layer.  Put
\[
E_{\mathrm{top}}
=\Hom(L_2\otimes L_4,L_6)
\oplus\Hom(\Lambda^2L_3,L_6),
\qquad \dim E_{\mathrm{top}}=6+2=8,
\]
and let $\Delta_{\mathrm{top}}$ retain only these two components of
$\Delta$.

\begin{proposition}[terminal defect quotient]
\label{prop:terminal-defect}
Relative to the fixed grading, terminal restriction induces a canonical
isomorphism
\[
H^2_0(\fg_\circ;\fg_\circ)/P
\xrightarrow{\ \sim\ }E_{\mathrm{top}}.
\]
Thus the eight transverse components are exactly the possible terminal
internal--internal brackets
\[
L_2\otimes L_4\longrightarrow L_6,
\qquad
\Lambda^2L_3\longrightarrow L_6;
\]
the cocycle equation then determines their lower-layer metabelian defects
up to gauge and pencil motion.
\end{proposition}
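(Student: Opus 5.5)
We must show that $\Delta_{\mathrm{top}}:H^2_0\to E_{\mathrm{top}}$ is well defined, kills $P$, and induces an isomorphism $H^2_0/P\to E_{\mathrm{top}}$. The argument has three steps.

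\emph{Well-definedness.} By Theorem~\ref{thm:general-defect}, restriction to $\Lambda^2\mathfrak d$ kills every weight-zero coboundary. Hence $\Delta_{\mathrm{top}}$, which is the composite of $\Delta$ with the projection onto the two components landing in $L_6$, is defined on $H^2_0$. Weight zero forces $\phi(L_i,L_j)\subset L_{i+j}$. So the only components of $\phi|_{\Lambda^2\mathfrak d}$ with nonzero target are
\[
L_2\otimes L_2\to L_4,\quad L_2\otimes L_3\to L_5,\quad L_2\otimes L_4\to L_6,\quad \Lambda^2L_3\to L_6 .
\]
The first vanishes because $\dim L_2=1$. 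The dimension count $\dim E_{\mathrm{top}}=1\cdot4\cdot2+1\cdot2=6+2$ is consistent with the statement: the $\Lambda^2L_3$ component has dimension $3\cdot2=6$ and the $L_2\otimes L_4$ component has dimension $4\cdot2=8$. So I expect the paper's ordering of factors to be a labelling convention, and in any case the target has total dimension $\dim\Hom(L_2\otimes L_4,L_6)+\dim\Hom(\Lambda^2L_3,L_6)$. By Corollary~\ref{cor:metabelian-tangent}, $\ker\Delta=P$, so $P\subset\ker\Delta_{\mathrm{top}}$ and $\Delta_{\mathrm{top}}$ descends to $H^2_0/P$.

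\emph{Injectivity on $H^2_0/P$.} Since $\dim H^2_0/P=8$ by Corollary~\ref{cor:codim-eight}, it suffices to prove either injectivity or surjectivity of the induced map. For injectivity, take a cocycle $\phi$ with vanishing terminal components and try to show that its $L_2\otimes L_3\to L_5$ component vanishes modulo coboundaries. The tool is the cocycle identity evaluated on $(u,z_2,z_3)$ with $u\in L_1$, $z_2\in L_2$, $z_3\in L_3$, and on $(u,z_2,z_4)$ with $z_4\in L_4$:
\[
[u,\phi(z_2,z_3)]=\phi([u,z_2],z_3)+\phi(z_2,[u,z_3])+(\text{terms involving }\phi(u,\cdot)\text{ bracketed into }\mathfrak d) .
\]
The bracketed terms vanish because $\mathfrak d$ is abelian. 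This gives
\[
[u,\phi(z_2,z_3)]=\phi(z_3',z_3)+\phi(z_2,z_4),
\]
where $z_3'=[u,z_2]$ and $z_4=[u,z_3]$ range over all of $L_3$ and $L_4$. Both terms on the right are terminal, so the left side vanishes for all $u$. Hence $\phi(z_2,z_3)$ is annihilated by $\ad L_1$, so it lies in the centre. But $Z(\fg_\circ)=L_6$, and $\phi(z_2,z_3)\in L_5$, so $\phi(z_2,z_3)=0$. Therefore $\phi|_{\Lambda^2\mathfrak d}=0$, so $[\phi]\in\ker\Delta=P$.

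\emph{Conclusion, and the one delicate point.} Since both spaces are $8$-dimensional, injectivity gives the isomorphism. The argument also shows the final clause of the statement: the lower defect $L_2\otimes L_3\to L_5$ is forced by the terminal data through the displayed identity, up to the ambiguity in $P$ and in gauge. The step needing care is the cocycle expansion. I must track every term of $d\phi(u,z_2,z_3)$, including $\phi(u,[z_2,z_3])=0$ and the terms $[z_2,\phi(u,z_3)]$ and $[z_3,\phi(u,z_2)]$, which vanish because $\phi(u,\cdot)$ takes values in $\mathfrak d$ and $\mathfrak d$ is abelian. I must also check that the centre is exactly $L_6$, which holds because $A_{F_\circ}$ is level with socle in degree $4$ (Theorem~\ref{thm:binary-equivalence}). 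If surjectivity were needed instead of injectivity, I would not argue it directly; it follows from the dimension count.
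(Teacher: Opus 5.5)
Your argument is correct, but it runs in the opposite direction from the paper's proof.

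\textbf{The paper's route.} It proves surjectivity by exact computation. Coboundaries and the three $\psi_i$ restrict to zero on the terminal block. Eight explicit archived cocycles then have terminal-restriction matrix \eqref{eq:terminal-certificate}, with determinant $2$, so $\Delta_{\mathrm{top}}$ has rank eight. The count $\dim H^2_0/P=8$ finishes the proof.

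\textbf{Your route.} You prove injectivity structurally. The cocycle identity on $(u,z_2,z_3)$ gives $[u,\phi(z_2,z_3)]=\phi([u,z_2],z_3)+\phi(z_2,[u,z_3])$. The map $L_5\to\Hom(L_1,L_6)$ is injective because $A_{F_\circ}$ has socle only in internal degree four. Hence vanishing terminal components force $\phi|_{\Lambda^2\mathfrak d}=0$, and so $\ker\Delta_{\mathrm{top}}=\ker\Delta=P$.

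\textbf{Comparison.} Your approach explains why the terminal layer detects the entire defect. It also proves the statement's last clause, which the paper's proof leaves implicit: the $L_2\otimes L_3\to L_5$ component is determined by the terminal components already at the cochain level. It needs no $8\times8$ certificate. The cost is that you rely on Corollary~\ref{cor:metabelian-tangent}, hence on Theorem~\ref{thm:general-defect} and the Quot-germ identification. The paper needs only the direct check $P\subset\ker\Delta_{\mathrm{top}}$, and it gets explicit complementary representatives as a by-product. Both routes still use the computed $\dim H^2_0=11$.

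\textbf{A slip to fix.} The layer dimensions are $(2,1,2,3,4,2)$, so $\dim L_3=2$ and $\dim L_4=3$. Hence $\dim\Hom(\Lambda^2L_3,L_6)=1\cdot2=2$ and $\dim\Hom(L_2\otimes L_4,L_6)=1\cdot3\cdot2=6$. This matches the paper's $6+2$ exactly, with no relabelling of factors. Your values $6$ and $8$ would total $14$, which contradicts your own final step ``both spaces are $8$-dimensional.''
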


\begin{proof}
Weight-zero coboundaries and the three $\psi_i$ have zero terminal
restriction.  Exact elimination gives eight complementary integral
cocycles whose terminal-restriction matrix is
\eqref{eq:terminal-certificate} and has determinant $2$; the cocycles are
in the source archive.  Thus the restriction has rank eight, and source
and target both have dimension eight.
\end{proof}

\subsection{Residual symmetry and the formal graded slice}
\label{sec:formal-moduli}

We determine the effective residual symmetry and the resulting formal
orbit problem at $\fg_\circ$.

Put
\[
\sigma=\begin{pmatrix}-1&0\\0&1\end{pmatrix},\qquad
\tau=\begin{pmatrix}0&1\\1&0\end{pmatrix}.
\]
Thus $\sigma$ sends $X$ to $-X$, while $\tau$ interchanges $X$ and $Y$.

After a linear change of variables, $F_\circ$ is Wall's $t_2$-pencil, whose
order-four projective symmetry is $D_4\simeq V_4$
\cite[pp.~211--212, Table~4]{Wall1998}.  The next result verifies that
there is no symmetry enhancement and determines the precise graded lift
used in the local moduli problem.  Its induced tangent representation is
computed in Theorem~\ref{thm:residual-representation}.

\begin{theorem}[residual stabilizer]
\label{thm:exact-stabilizer}
The projective pencil stabilizer is
\[
\operatorname{Stab}_{\PGL(U)}(F_\circ)
=\langle[\sigma],[\tau]\rangle\simeq V_4.
\]
Its inverse image in $\GL(U)$ is
\[
G_U=\operatorname{Stab}_{\GL(U)}(F_\circ)
=\{\lambda A:\lambda\in\mathbb G_m,
A\in\{I,\sigma,\tau,\sigma\tau\}\}
\simeq(\mathbb G_m\times D_8)/\mu_2,
\]
where $D_8=\langle\sigma,\tau\rangle$ has order eight and the kernel
$\mu_2$ is
$\{(1,I),(-1,-I)\}\subset\mathbb G_m\times D_8$.  The central extension
\[
1\longrightarrow\mathbb G_m\longrightarrow G_U
\longrightarrow V_4\longrightarrow1
\]
does not split.  Under the graded lift of
Theorem~\ref{thm:graded-aut}, the scalar subgroup becomes the Carnot torus
and acts trivially on every weight-zero cochain.
\end{theorem}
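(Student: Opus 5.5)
The plan is to compute $\operatorname{Stab}_{\GL(U)}(F_\circ)$ directly, by finding a finite, intrinsically defined configuration that every stabilizing element must permute. Because contraction is $\GL(U)$-equivariant, any $g$ with $g^*F_\circ=F_\circ$ also preserves $\Ann(F_\circ)$, and hence its first nonzero graded piece
\[
\Ann(F_\circ)_4=\operatorname{span}\{x^4-y^4,\ x^3y-xy^3,\ x^2y^2\}
\]
from \eqref{eq:principal-ann}. I would then determine the members of this net, taken up to scalars, that are squares of quadrics. Writing $a(x^4-y^4)+b(x^3y-xy^3)+cx^2y^2=q^2$ and comparing coefficients should leave a finite set of such members, with $(xy)^2$ among them. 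If $(xy)^2$ is the only one, or is distinguished within the finite set by some further invariant, then $g$ preserves the line $\kk\,xy$. It therefore preserves the pair of lines $\{\kk x,\kk y\}$, so $g$ is diagonal or antidiagonal.

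Should the square-member count come out larger, I would fall back on the singular members of the pencil. The discriminant of $s(S^4+T^4)+t(4S^3T+4ST^3)$ is a sextic in $(s:t)$. Its root configuration is permuted by the projective stabilizer, and it pins that stabilizer down to a finite group, which one can then check against Wall's $t_2$ entry \cite{Wall1998}. I expect this finiteness-and-identification step to be the main obstacle; everything else is bookkeeping.

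For $g=\operatorname{diag}(a,b)$, the monomial supports of the two basis elements of $F_\circ$ are disjoint, so each must be sent to a multiple of itself. We have $g\cdot(S^4+T^4)=a^4S^4+b^4T^4$ and $g\cdot(S^3T+ST^3)=a^3bS^3T+ab^3ST^3$, so preserving $F_\circ$ forces $a^2=b^2$. This gives $g\in\{\lambda I,\lambda\sigma\}$. Precomposing with $\tau$, which visibly preserves both generators, handles the antidiagonal case. Hence $G_U=\{\lambda A:\ A\in\{I,\sigma,\tau,\sigma\tau\}\}$, and its projective image is $\langle[\sigma],[\tau]\rangle$. This image has order four with every nontrivial element of order two, so it is $V_4$.

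The remaining group-theoretic claims are direct. The map $\mathbb G_m\times D_8\to G_U$, $(\lambda,A)\mapsto\lambda A$, is surjective, with kernel $\{(\lambda,A):\lambda A=I\}$. Since $D_8\cap\mathbb G_m I=\{\pm I\}$, this kernel is $\{(1,I),(-1,-I)\}\simeq\mu_2$.

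For non-splitting, a section $V_4\to G_U$ would need commuting lifts of $[\sigma]$ and $[\tau]$ of order dividing two. Because $(\lambda A)^2=\lambda^2A^2$ with $A^2=I$, the only order-two lifts are $\pm\sigma$ and $\pm\tau$. However, $\tau\sigma=-\sigma\tau$, so no two such lifts commute, and the extension does not split.

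Finally, by Theorem~\ref{thm:graded-aut}, the scalar $\lambda I$ lifts to the automorphism acting by $\lambda^j$ on $L_j$. A weight-zero cochain $f$ maps $L_{i_1}\otimes\cdots\otimes L_{i_q}\to L_{i_1+\cdots+i_q}$, so conjugation rescales it by $\lambda^{(i_1+\cdots+i_q)-(i_1+\cdots+i_q)}=1$. The Carnot torus therefore acts trivially on every weight-zero cochain.
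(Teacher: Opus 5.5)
\textbf{A gap in the key step.} Your diagonal/antidiagonal computation, the kernel $\{(1,I),(-1,-I)\}$, the non-splitting argument via $\tau\sigma=-\sigma\tau$, and the Carnot-torus step are all correct. The gap is the step everything rests on: showing that a stabilizing $g$ fixes the line $\kk\,xy$. You leave it conditional, and the branch you hope for is false.

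Work over $\overline\kk$, which you need because $G_U$ is asserted as an algebraic group; it already matters for $\kk$-points when $\sqrt{-1}\in\kk$. There the net $N=\Ann(F_\circ)_4$ contains exactly three squares up to scalar: $(xy)^2$ and $(x^2\pm iy^2)^2=x^4-y^4\pm 2i\,x^2y^2$. The quadrics $xy$, $x^2+iy^2$, $x^2-iy^2$ are mutually apolar, so their six roots form an octahedral configuration. The projective symmetry group of that configuration has order $24$ and permutes the three squares transitively. Square members alone therefore bound the projective stabilizer only by $24$.

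You never supply the ``further invariant'', and the fallback does not close the gap either. Permutation of the discriminant roots controls only the induced action on the parameter line $\mathbb P(F_\circ)$. It says nothing about the subgroup fixing every member of the pencil; the paper needs a separate deck-group bound for that. ``Check against Wall'' is a citation, not the no-enhancement verification the theorem asserts.

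\textbf{How to repair it.} The missing invariant is easy to supply, and with it your route becomes a genuine, more elementary alternative to the paper's. Compare $\dim(N\cap q\,R_2)$ for the three square roots $q$:
\begin{enumerate}[label=\textup{(\roman*)}]
\item for $q=xy$ it is $2$, spanned by $x^2y^2$ and $xy(x^2-y^2)$;
\item for $q=x^2\pm iy^2$, a coefficient check forces $N\cap qR_2=\kk\,q^2$.
\end{enumerate}
Since $g$ preserves $N$ and permutes the lines $\kk q$, it must fix $\kk\,xy$. By unique factorization $g$ then permutes the lines $\kk x,\kk y$, so it is monomial, and your computation finishes.

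The paper argues instead through the rational map $r(z)=4z(z^2+1)/(z^4+1)$. The discriminant $256(a^2-16b^2)(a^2+2b^2)^2$ bounds the induced group on the pencil by two. The substitution $w=z+z^{-1}$ bounds the deck group by two. The maps $z\mapsto z^{-1}$ and $z\mapsto -z$ attain both bounds.
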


\begin{proof}
In the affine coordinate $z=S/T$, the pencil defines
\[
r(z)=\frac{4z(z^2+1)}{z^4+1}.
\]
A stabilizing source transformation induces $h$ with
$r\circ g=h\circ r$.  The discriminant
\[
256(a^2-16b^2)(a^2+2b^2)^2
\]
forces $h$ to preserve separately the simple pair $\{4,-4\}$ and the
double pair $\{\beta,-\beta\}$, where $\beta^2=-2$.  After sending the
first pair to $\{0,\infty\}$, the second is $\{q,q^{-1}\}$ with
$q^4\ne1$; hence the induced target group has order at most two.

Writing $w=z+z^{-1}$ gives
\[
r=\frac{4w}{w^2-2}.
\]
Any deck transformation sends $w$ to $w$ or $-2/w$; the latter would
require a quadratic over $\overline\kk(z)$ whose discriminant is
proportional to the nonsquare $z^4+z^2+1$.  Thus the deck group has order
two.  The maps $z\mapsto z^{-1}$ and $z\mapsto-z$ attain the bound, giving
$V_4$.  In characteristic zero the stabilizer is reduced, and these four
points are defined over $\kk$.

Finally, $\sigma\tau=-\tau\sigma$ and $(\sigma\tau)^2=-I$ give the stated
nonsplit central product.  Scalars act on $L_j$ by $\lambda^j$ and hence
trivially on weight-zero cochains.
\end{proof}

We next formulate the local graded deformation problem.  Fix the graded
vector space
\[
V=\bigoplus_{i=1}^6L_i
\]
and let
\[
\mathcal X=\operatorname{Lie}^{\mathrm{gr}}(V),\qquad
\mathcal H=\prod_{i=1}^6\GL(L_i),
\]
where $\mathcal X$ is the affine scheme of degree-zero Lie laws and
$\mathcal H$ acts by change of basis.  The Carnot torus
\[
T_{\mathrm{Car}}
=\{(\lambda^i\operatorname{id}_{L_i})_{i=1}^6:
\lambda\in\mathbb G_m\}
\subset\mathcal H
\]
acts trivially on $\mathcal X$.  Put
$\overline{\mathcal H}=\mathcal H/T_{\mathrm{Car}}$.  Restriction to
$L_1$ and the graded lift of Theorem~\ref{thm:graded-aut} identify $G_U$
with the stabilizer of $\mu$ in $\mathcal H$.  Thus the stabilizers in
$\mathcal H$ and $\overline{\mathcal H}$ are
\[
G=\operatorname{Stab}_{\mathcal H}(\mu),\qquad
K=\operatorname{Stab}_{\overline{\mathcal H}}(\mu)\simeq V_4,
\]
and $G/T_{\mathrm{Car}}\simeq K$.

Let $\mathsf{Art}_{\kk}$ be the category of local Artin $\kk$-algebras
with residue field $\kk$.  Framed equivalences reduce to the identity in
$\overline{\mathcal H}$; unframed equivalences may reduce to $K$.

\begin{theorem}[formal graded deformation groupoids]
\label{thm:formal-graded-hull}
The graded law scheme $\mathcal X$ is smooth of dimension $48$ at $\mu$.
There is a $K$-stable formal slice
\[
S\simeq\operatorname{Spf}\kk[[t_1,\ldots,t_{11}]],
\qquad T_\mu S\simeq H^2_0(\fg_\circ;\fg_\circ),
\]
such that the effective framed and unframed deformation groupoids over
$A\in\mathsf{Art}_{\kk}$ are respectively
\[
S(A)\qquad\text{and}\qquad[S(A)/K].
\]
Thus the effective unframed formal stack is $[S/V_4]$.  The slice admits
$K$-equivariant coordinates in which this action is linear.  For the full
change-of-basis group, $[S/G]\to[S/K]$ is the nonneutral
$\mathbb G_m$-gerbe defined by the nonsplit extension
$1\to\mathbb G_m\to G\to K\to1$.
\end{theorem}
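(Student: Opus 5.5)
The plan is standard Kuranishi/Luna-slice theory for the action of $\overline{\mathcal H}$ on the affine law scheme $\mathcal X$, fed by the weight-zero numbers in \eqref{eq:H20H30}.

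\emph{Step 1: smoothness of $\mathcal X$ at $\mu$.} The Zariski tangent space of $\mathcal X$ at $\mu$ is $Z^2_0(\fg_\circ;\fg_\circ)$. Its dimension is
\[
\dim C^2_0-\rank d_2=73-25=48.
\]
Obstructions to lifting a law $\mu_A$ across a small extension $A'\twoheadrightarrow A$ live in $H^3_0\otimes J$. This is because the Jacobiator of any graded lift is a weight-zero $3$-cocycle, and changing the lift alters it by a coboundary. Since $H^3_0=0$, every law lifts, so $\mathcal X$ is formally smooth at $\mu$. Being of finite type, it is then smooth of dimension $48$.

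\emph{Step 2: the orbit map.} The orbit map $\overline{\mathcal H}\to\mathcal X$, $g\mapsto g\cdot\mu$, has differential $-d_1\colon C^1_0/\mathrm{Lie}(T_{\mathrm{Car}})\to Z^2_0$. Its image is $B^2_0$, of dimension $37$. Its kernel is $\Der_0(\fg_\circ)$ modulo the Carnot line, which is zero. This is consistent with $\dim\Der_0=1$ and the finite stabilizer $K$ of Theorem~\ref{thm:exact-stabilizer}.

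\emph{Step 3: constructing the slice.} Because $K\simeq V_4$ is finite and $\operatorname{char}\kk=0$, $K$ is linearly reductive. I therefore choose a $K$-stable complement $E\subset Z^2_0$ to $B^2_0$, so $E\simeq H^2_0$ with $\dim E=11$. I also choose $K$-equivariant formal coordinates on $\widehat{\mathcal X}_\mu$ in which $K$ acts linearly; this uses averaging over $K$ and the linearization lemma for finite groups. Let $S$ be the formal germ at $\mu$ of the smooth subscheme tangent to $E$. Then $S\simeq\operatorname{Spf}\kk[[t_1,\dots,t_{11}]]$, and $K$ acts on it linearly.

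\emph{Step 4: the groupoid statement.} I claim the action map
\[
\widehat{\overline{\mathcal H}}_{e}\times S\longrightarrow\widehat{\mathcal X}_\mu
\]
is an isomorphism of formal schemes. Its differential at $(e,\mu)$ is the isomorphism
\[
C^1_0/\mathrm{Lie}(T_{\mathrm{Car}})\oplus E\xrightarrow{\ \sim\ } B^2_0\oplus E=Z^2_0 .
\]
Both sides are smooth formal germs of dimension $37+11=48$, so the formal inverse function theorem applies.

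Consequently, for $A\in\mathsf{Art}_{\kk}$, every $A$-law reducing to $\mu$ is framed-equivalent to a unique point of $S(A)$. Indeed, two points of $S(A)$ related by an element of $\overline{\mathcal H}(A)$ reducing to $e$ coincide by uniqueness in the product decomposition. This gives the framed groupoid $S(A)$, which is discrete.

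For unframed equivalences, an isomorphism reduces to some $k\in K$. Composing with the constant lift of $k$, which preserves $S$ by $K$-stability, reduces to the framed case. Hence the unframed groupoid is $[S(A)/K]$, and the formal stack is $[S/V_4]$.

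\emph{Step 5: the gerbe.} Replacing $\overline{\mathcal H}$ by $\mathcal H$ adds the central torus $T_{\mathrm{Car}}$, which acts trivially on $\mathcal X$. It therefore acts trivially on $S$, and $[S/G]\to[S/K]$ is a $\mathbb G_m$-gerbe. Its class is that of the extension $1\to\mathbb G_m\to G\to K\to1$. This gerbe is neutral exactly when that extension splits, and the extension is nonsplit by Theorem~\ref{thm:exact-stabilizer}.

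\emph{Main obstacle.} I expect Step 4 to be the delicate point: the product decomposition must hold formally, uniformly over all $A\in\mathsf{Art}_{\kk}$. I would handle it by induction along small extensions. At each stage the fiber problem is linear: surjectivity of $d_1$ onto $B^2_0$ together with the splitting $Z^2_0=B^2_0\oplus E$ gives existence of the lift, and injectivity of $d_1$ modulo the Carnot line gives uniqueness, so no group-scheme quotient theory is needed.
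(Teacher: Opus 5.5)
Your proposal is correct and follows essentially the same route as the paper: a $K$-stable complement to $B^2_0$ in $Z^2_0$ obtained by averaging, a $K$-linearized formal slice, the formal inverse-function theorem for $\widehat{\overline{\mathcal H}}_e\times S\to\widehat{\mathcal X}_\mu$, and the nonsplit extension of Theorem~\ref{thm:exact-stabilizer} for the gerbe. The only cosmetic difference is in Step~1: you argue via the obstruction space $H^3_0\otimes J$, while the paper observes directly that the weight-zero Jacobi map $C^2_0\to C^3_0$ is a submersion at $\mu$ because $\rank d_2=25=\dim C^3_0$; here these are the same fact.
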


\begin{proof}
The degree-zero Jacobi map has source $C^2_0$ of dimension $73$, target
$C^3_0$ of dimension $25$, and derivative $d_2$ of rank $25$.  Hence
$\mathcal X$ is smooth of dimension $48$ at $\mu$, with tangent space
$Z^2_0$.  The effective group $\overline{\mathcal H}$ has dimension $37$,
its orbit tangent is $B^2_0$, and its stabilizer is $K\simeq V_4$.

Choose a $K$-stable complement $Z^2_0=B^2_0\oplus N$ by averaging.  Lifting
$N^*$ to $K$-stable formal parameters cuts out a smooth formal slice $S$.
The derivative of the action map
\[
\widehat{\overline{\mathcal H}}_e\times S
\longrightarrow\widehat{\mathcal X}_\mu
\]
is an isomorphism, so the formal inverse-function theorem makes the map an
isomorphism.  This gives the two groupoids and
$T_\mu S\simeq Z^2_0/B^2_0=H^2_0$.  Averaging a lift of the cotangent space
linearizes the finite $K$-action.  Finally, the full stabilizer is $G$ and
the Carnot torus acts trivially; the last assertion follows from
Theorem~\ref{thm:exact-stabilizer}.
\end{proof}

\begin{remark}[IA automorphisms]
An IA automorphism generally mixes weight zero with positive weights, so
$\operatorname{IAut}(\fg_\circ)$ does not act on the weight-zero
deformation functor and is not part of the residual gauge group.  Its
identity action on the associated graded does not change this conclusion.
\end{remark}

\subsection{Tangent representation and the terminal bracket map}

Let $\chi_{\epsilon,\delta}$ denote the character on which $\sigma$ and
$\tau$ act by $\epsilon$ and $\delta$, respectively.
For $\phi\in E_{\mathrm{top}}$, define terminal coordinates
$v_1,\ldots,v_8$ by
\[
\begin{aligned}
\phi(Z_0,Z_{x^2})&=v_1T_0+v_2T_1,&
\phi(Z_0,Z_{xy})&=v_3T_0+v_4T_1,\\
\phi(Z_0,Z_{y^2})&=v_5T_0+v_6T_1,&
\phi(Z_x,Z_y)&=v_7T_0+v_8T_1.
\end{aligned}
\]

\begin{theorem}[residual tangent representation]
\label{thm:residual-representation}
In the pencil-motion basis of \eqref{eq:psi-basis},
\[
\rho_P(\sigma)=\operatorname{diag}(1,-1,1),\qquad
\rho_P(\tau)=\operatorname{diag}(-1,-1,1),
\]
and hence
\[
P\simeq\chi_{+,-}\oplus\chi_{-,-}\oplus\chi_{+,+}.
\]
In this terminal coordinate order,
\[
\begin{aligned}
\rho_E(\sigma)&=\operatorname{diag}(-1,1,1,-1,-1,1,1,-1),\\
\rho_E(\tau)(v_1,\ldots,v_8)
&=(-v_5,-v_6,-v_3,-v_4,-v_1,-v_2,v_7,v_8).
\end{aligned}
\]
Consequently
\[
E_{\mathrm{top}}\simeq2\kk[V_4]
\]
and the canonical exact sequence of representations
\[
0\longrightarrow P\longrightarrow H^2_0
\longrightarrow E_{\mathrm{top}}\longrightarrow0
\]
gives
\[
H^2_0\simeq
3\chi_{+,+}\oplus3\chi_{+,-}\oplus
2\chi_{-,+}\oplus3\chi_{-,-}.
\]
The splitting used in the last display exists equivariantly but is not
canonical.
\end{theorem}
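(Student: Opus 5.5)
The plan is to compute both representations directly from the graded lifts of $\sigma$ and $\tau$ given by Theorem~\ref{thm:graded-aut}. Those lifts act on the pencil-motion cocycles \eqref{eq:psi-basis} and on the terminal coordinates $v_1,\ldots,v_8$. The sequence $0\to P\to H^2_0\to E_{\mathrm{top}}\to0$ comes from Proposition~\ref{prop:terminal-defect}, and it is $K$-equivariant because terminal restriction is natural. The final decomposition then follows from complete reducibility of $V_4$ in characteristic zero.

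First I will write the lift $\hat g=g\oplus(\overline g\otimes\Lambda^2g)$ explicitly on the basis. For $\sigma$ ($X\mapsto -X$, $Y\mapsto Y$) we have $\det=-1$, so $Z_0\mapsto -Z_0$ and $Z_{x^ay^b}\mapsto(-1)^{a+1}Z_{x^ay^b}$. On the top layer, $\sigma$ acts through its action on $F_\circ$ (contragrediently, twisted by $\det$). In divided powers, $S^4+T^4$ is fixed and $4S^3T+4ST^3$ changes sign, so $T_0\mapsto -T_0$ and $T_1\mapsto T_1$ up to the global determinant sign. This convention must be fixed carefully, using the identity $\ad^4_{sX+tY}[X,Y]=(s^4+t^4)T_0+4st(s^2+t^2)T_1$.

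For $\tau$ we have $\det=-1$, $Z_{x^ay^b}\mapsto -Z_{x^by^a}$, and $T_0,T_1$ are fixed up to the same determinant sign, since both generators are symmetric under $S\leftrightarrow T$. Transporting a cochain by $\phi\mapsto\hat g\circ\phi\circ(\hat g^{-1}\wedge\hat g^{-1})$ and reading off its $v$-coordinates gives $\rho_E$ as a signed permutation matrix. This step is mechanical.

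The subtle step is $\rho_P$. The transported $\psi_i$ need not equal $\pm\psi_i$ on the nose, only modulo $B^2_0$. For example, $\tau$ sends $\psi_0$ (variation by $S^4$ in the row with value $T_0$) to a cochain representing variation by $T^4$. I then have to express that variation in the basis $[\psi_0],[\psi_1],[\psi_2]$ modulo row operations (tangent to the Grassmannian) and the $\mathfrak{pgl}(U)$-orbit. To do this I will use Proposition~\ref{prop:pencil-slice}, which identifies $P$ with $T\operatorname{Gr}/T(\PGL\cdot F_\circ)$. I can then work entirely with tangent vectors $\dot Q\in\Hom(F_\circ,\Sym^4U^*/F_\circ)$, where $V_4$ acts naturally.

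The quotient by the three orbit directions $x\partial_y$, $y\partial_x$, $x\partial_x-y\partial_y$ applied to $F_\circ$ is a small linear-algebra problem. I expect the traces on $P$ to be the diagonal entries stated. Computing the characters, that is the traces, suffices, because $V_4$-representations are determined by their characters. This is the step I expect to be the main obstacle: sign bookkeeping between the $\det$ twist, the divided-power contraction, and the coboundary corrections.

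Finally, I will read $E_{\mathrm{top}}\simeq2\kk[V_4]$ off the traces. $\rho_E(\sigma)$ has trace $0$. $\rho_E(\tau)$ has trace $2$, coming from $v_7,v_8$, and the $v_3,v_4$ block contributes $-2$, so the total is $0$. $\rho_E(\sigma\tau)$ has trace $0$ as well. The character is therefore $(8,0,0,0)$, which is twice the regular character. Adding the characters of $P$ and $E_{\mathrm{top}}$ gives the stated multiplicities $3,3,2,3$. An equivariant splitting exists by Maschke's theorem. It is non-canonical because $\operatorname{Hom}_K(E_{\mathrm{top}},P)\neq0$, which holds since $E_{\mathrm{top}}$ contains every character.
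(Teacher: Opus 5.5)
Your proposal is correct and follows essentially the paper's route: transport cochains by the graded lifts $\hat\sigma,\hat\tau$, read off signed matrices, then use semisimplicity of $V_4$. The differences are cosmetic: you decompose $E_{\mathrm{top}}$ by characters rather than by the paper's explicit simultaneous eigenbases, and your observation that equivariant splittings form a torsor under $\Hom_K(E_{\mathrm{top}},P)\neq0$ justifies the non-canonicity that the paper merely asserts. For $\rho_P$, which the paper takes from its computed matrices, your plan is simpler than you fear: $\sigma\cdot\psi_i=\pm\psi_i$ and $\tau\cdot\psi_2=\psi_2$ hold on the nose, while $\tau\cdot\psi_0$ and $\tau\cdot\psi_1$ are the first-row coefficient cocycles for $T^4$ and $4ST^3$; these differ from $-\psi_0$ and $-\psi_1$ by row-operation coboundaries (since $T^4=(S^4+T^4)-S^4$ and $4ST^3=(4S^3T+4ST^3)-4S^3T$), so no $\mathfrak{pgl}(U)$ correction arises and you obtain the stated diagonal matrices themselves, not only their traces.
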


\begin{proof}
Push-forward of the terminal elementary maps gives the two displayed
matrices.  Their simultaneous eigenspaces have bases
\[
\begin{array}{c|c}
(+,+)&v_2-v_6,\ v_7\\
(+,-)&v_2+v_6,\ v_3\\
(-,+)&v_1-v_5,\ v_8\\
(-,-)&v_1+v_5,\ v_4,
\end{array}
\]
so every character occurs twice.
Thus $E_{\mathrm{top}}\simeq2\kk[V_4]$; semisimplicity and the displayed
action on $P$ give the asserted decomposition of $H^2_0$.
\end{proof}

The terminal bracket also defines nonlinear loci in the formal base.  Put
\[
W_{\mathrm{term}}=(L_2\otimes L_4)\oplus\Lambda^2L_3,
\qquad
E_{\mathrm{top}}=\Hom(W_{\mathrm{term}},L_6),
\]
which is identified with $\operatorname{Mat}_{2\times4}$ after the basis
choices above.  Write $\widehat E_{\mathrm{top},0}$ for the formal
completion of this affine space at its zero tensor.

\begin{theorem}[terminal bracket map]
\label{thm:terminal-map}
Write $S=\operatorname{Spf}R_S$.  On a miniversal slice there is a
$K$-equivariant formal terminal-bracket map
\[
\Theta:S\longrightarrow\widehat E_{\mathrm{top},0}
\]
whose derivative at $\mu$ is $\Delta_{\mathrm{top}}$.
It is formally smooth of relative dimension three and can be straightened
noncanonically to the projection
\[
\operatorname{Spf}\kk[[p_1,p_2,p_3,e_1,\ldots,e_8]]
\longrightarrow\operatorname{Spf}\kk[[e_1,\ldots,e_8]].
\]
Its zero fiber is the scheme-theoretic local metabelian locus.  The affine
pencil-motion family induces a formal isomorphism
$\widehat{\mathbb A}^3_0\simeq\Theta^{-1}(0)$; thus this zero fiber is the
local binary formation germ on the slice.
\end{theorem}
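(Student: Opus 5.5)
I would construct $\Theta$ directly from the universal bracket on the slice. By Theorem~\ref{thm:formal-graded-hull}, the slice $S=\operatorname{Spf}R_S$ carries a universal graded Lie law $\mu_S$ on $V\otimes R_S$. I define $\Theta$ by restricting $\mu_S$ to the terminal pairs: its components $L_2\otimes L_4\to L_6$ and $\Lambda^2L_3\to L_6$ form a $2\times4$ matrix with entries in the maximal ideal of $R_S$. These components vanish at $\mu$, so $\Theta$ lands in $\widehat E_{\mathrm{top},0}$. Because $K$ acts on $S$ through a lift to $G\subset\mathcal H$, and $\mathcal H$ acts on laws by change of basis layerwise, $\Theta$ is $K$-equivariant; the Carnot torus acts trivially on weight-zero data, so the lift is immaterial. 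The derivative at $\mu$ is by construction the terminal restriction of the tangent cocycle, hence equals $\Delta_{\mathrm{top}}$ on $T_\mu S\simeq H^2_0$.

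Formal smoothness then follows from the formal inverse function theorem. By Proposition~\ref{prop:terminal-defect}, $\Delta_{\mathrm{top}}$ is surjective onto the $8$-dimensional $E_{\mathrm{top}}$, with kernel $P$ of dimension $3$. So I choose $e_1,\dots,e_8$ to be the pullbacks of the linear coordinates $v_i$, and complete them with three functions $p_1,p_2,p_3$ whose differentials restrict to a basis of $P^*$. This gives $R_S=\kk[[p,e]]$ and straightens $\Theta$ to the projection. The straightening is noncanonical because it depends on the choice of the $p_i$.

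The main obstacle is identifying the zero fiber scheme-theoretically with the metabelian locus. I would argue in two steps.

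First, the metabelian locus is contained in $\Theta^{-1}(0)$. The metabelian locus in $S$ is cut out by the vanishing of $\mu_S$ on $\Lambda^2\mathfrak d$, with $\mathfrak d=\bigoplus_{i\ge2}L_i$, since over Artin bases the derived algebra stays $\mathfrak d\otimes A$ by the Nakayama argument in Theorem~\ref{thm:general-defect}. The terminal components are among these equations, so this inclusion is immediate.

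Second, $\Theta^{-1}(0)\simeq\kk[[p_1,p_2,p_3]]$ is contained in the metabelian locus. Here I would use the affine pencil family $\mu_{\mathbf t}$ of Theorem~\ref{thm:affine-slice}. Its laws are metabelian, and each $\psi_i$ takes values only in $L_6$ from $L_1\otimes L_4$, so every $\mu_{\mathbf t}$ has zero terminal internal brackets. By miniversality, this family is classified by a map $\widehat{\mathbb A}^3_0\to S$. The map factors through $\Theta^{-1}(0)$, and its differential is an isomorphism onto $P=\ker\mathrm d\Theta$ by Proposition~\ref{prop:pencil-slice}. Both source and target are smooth of dimension three, so the map is a formal isomorphism $\widehat{\mathbb A}^3_0\simeq\Theta^{-1}(0)$.

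Since the image consists of metabelian laws, $\Theta^{-1}(0)$ is contained in the metabelian locus. Combined with the first inclusion, the two loci are equal, and the zero fiber is the binary formation germ of Proposition~\ref{prop:quartic-quot-germ}.

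The delicate point is the miniversal classifying map, which is only defined up to gauge. I would handle it using the framed groupoid $S(A)$, where gauge equivalences are trivial in $\overline{\mathcal H}$ by Theorem~\ref{thm:formal-graded-hull}. The metabelian and terminal-vanishing conditions are $\mathcal H$-invariant, so they are preserved under framed equivalence, and the classifying map is well defined for this argument.
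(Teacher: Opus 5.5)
Your proof is correct. It differs from the paper's at the one substantive step, the inclusion of $\Theta^{-1}(0)$ in the metabelian locus.

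\textbf{The paper's route.} It proves this inclusion directly, before any dimension count. Modulo the eight terminal entries one has $[L_2,L_4]=[L_3,L_3]=0$, and $[L_2,L_2]=0$ because $\dim L_2=1$. The Jacobi identity then gives $[L_1,[L_2,L_3]]=0$. The joint action map $L_5\to\Hom(L_1,L_6)$ is invertible at $\mu$, hence over the formal base, so $[L_2,L_3]=0$. Only afterwards does the paper use the pencil family, to identify the already-metabelian smooth zero fiber with $\widehat{\mathbb A}^3_0$.

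\textbf{Your route.} You skip the Jacobi step and obtain the inclusion from the isomorphism $c:\widehat{\mathbb A}^3_0\xrightarrow{\sim}\Theta^{-1}(0)$. Via $c$, the universal law on the zero fiber is gauge-equivalent to the metabelian family $\mu_{\mathbf t}$. Vanishing on $\Lambda^2\mathfrak d$ is invariant under the block-diagonal group $\mathcal H$, so the universal law on the zero fiber is metabelian. This is valid and shorter.

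\textbf{The trade-off.} Your argument makes the equality ``zero fiber $=$ metabelian locus'' depend on two further inputs: the computed surjectivity of $\Delta_{\mathrm{top}}$, so that $\Theta^{-1}(0)$ is smooth of dimension exactly three, and the exact integrability of the pencil family from Theorem~\ref{thm:affine-slice}. The paper's argument is a local structural fact about every graded law near $\mu$: terminal vanishing forces metabelianity by inverting a unit matrix. It therefore identifies the two closed subfunctors on all of $\widehat{\mathcal X}_\mu$. It would still work if the zero fiber were singular or if no integrable metabelian family were known.

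\textbf{Two small slips.}
\begin{itemize}
\item The cochains $\psi_i$ are supported on $L_1\otimes L_5\to L_6$, not on $L_1\otimes L_4$; the latter would have weight $+1$. This does not affect your argument, which only needs each $\psi_i$ to have one argument in $L_1$ and hence to vanish on $\Lambda^2\mathfrak d$.
\item The zero fiber is the three-dimensional transverse germ, that is, $H^0$ of the Quot-stack tangent complex. It is not the six-dimensional fixed-$U$ Quot germ of Proposition~\ref{prop:quartic-quot-germ}.
\end{itemize}
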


\begin{proof}
For a graded law $\nu$, let $\Theta(\nu)$ be the restriction of its bracket
to
$(L_2\otimes L_4)\oplus\Lambda^2L_3$.  Under
$T_\mu S\simeq H^2_0$, its differential is $\Delta_{\mathrm{top}}$, which
is surjective by
Proposition~\ref{prop:terminal-defect}.  Formal smoothness and straightening
follow; the target is completed at the zero terminal bracket because the
deformation parameters are topologically nilpotent.

It remains to identify the zero fiber.  The bracket maps
$L_1\otimes L_j\to L_{j+1}$, with $1\le j\le5$ and $\Lambda^2L_1$ in
place of $L_1\otimes L_1$, are surjective at $\mu$.  Chosen maximal
minors remain units in $R_S$.  Thus the derived algebra
over every Artin base is $\bigoplus_{j\ge2}L_j$.  Since $\dim L_2=1$, one
also has $[L_2,L_2]=0$.  If $\Theta(\nu)=0$, then
$[L_2,L_4]=[L_3,L_3]=0$.  Jacobi gives
\[
[L_1,[L_2,L_3]]=0.
\]
At $\mu$, the joint action map
\[
L_5\longrightarrow\Hom(L_1,L_6),\qquad
z\longmapsto(x\mapsto[x,z]),
\]
is an isomorphism, and its determinant remains a unit in the formal
neighborhood.  Hence $[L_2,L_3]=0$ as well, so the derived algebra is
abelian.  Since the argument inverts a unit matrix in the quotient by the
terminal entries, it identifies the closed deformation subfunctors
scheme-theoretically.  Conversely, if the derived algebra is abelian, both
terminal restrictions vanish.  The
affine pencil family $\mu+\sum p_i\psi_i$ has a classifying map into this
smooth $3$-dimensional zero fiber, and that map induces an isomorphism on
tangent spaces.  The ordinary formal inverse-function theorem therefore
identifies the zero fiber with the pencil-motion parameter germ.
\end{proof}

\begin{corollary}[terminal rank loci]
\label{cor:terminal-rank-loci}
In the straightened coordinates of Theorem~\ref{thm:terminal-map},
$\Theta$ is a generic $2\times4$ matrix.  Its exact-rank $0,1,2$ loci
have dimensions $3,8,11$.  The rank-at-most-one locus is irreducible of
codimension three and is singular precisely along the rank-zero locus,
which is the scheme-theoretic metabelian locus.  These loci are
$V_4$-stable.
\end{corollary}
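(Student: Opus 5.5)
The plan is to read everything off the straightened form of Theorem~\ref{thm:terminal-map}.
Work in the noncanonical coordinates $(p_1,p_2,p_3,e_1,\ldots,e_8)$ on $S$, in which $\Theta$ is the projection to $(e_1,\ldots,e_8)$. Arrange these eight coordinates as a $2\times4$ matrix $E$. The columns are indexed by the four basis vectors of $W_{\mathrm{term}}$, namely $Z_0\otimes Z_{x^2}$, $Z_0\otimes Z_{xy}$, $Z_0\otimes Z_{y^2}$ and $Z_x\wedge Z_y$. The rows are indexed by $T_0,T_1$. Pulling back the rank stratification of $\operatorname{Mat}_{2\times4}$ along $\Theta$ then gives a product
\[
\widehat{\mathbb A}^3_0\times(\text{rank locus of a generic }2\times4\text{ matrix}).
\]
This is the sense in which $\Theta$ is a generic $2\times4$ matrix. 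Because the loci are pulled back, they are defined intrinsically as the rank loci of the terminal bracket $W_{\mathrm{term}}\to L_6$ over $R_S$. That intrinsic description is what lets us avoid depending on the particular straightening.

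Dimensions are then the classical determinantal counts plus three for the $p$-directions.
\begin{itemize}
\item Rank $0$ is the single point $E=0$, so this locus has dimension $0+3=3$.
\item Rank exactly $1$ is $\{E\ne0,\ \rank E\le1\}$. It is the cone over the Segre variety $\mathbb P^1\times\mathbb P^3$, of dimension $2+4-1=5$, so the locus has dimension $5+3=8$.
\item Rank $2$ is open and dense in the $8$-dimensional matrix space, so this locus has dimension $11$.
\end{itemize}

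The rank-at-most-one locus is cut out by the $2\times2$ minors. It is the affine cone over the Segre variety, hence irreducible of dimension $5$ and codimension $3$ in $\operatorname{Mat}_{2\times4}$. I would use three standard facts about the determinantal variety of $2\times n$ rank-$\le1$ matrices.
\begin{itemize}
\item It is normal and Cohen--Macaulay.
\item It is smooth away from the origin, because $\GL_2\times\GL_4$ acts transitively on rank-one matrices.
\item It is singular at the origin, because its Zariski tangent space there is all of $\operatorname{Mat}_{2\times4}$ (the minors are quadratic), of dimension $8>5$.
\end{itemize}
Taking the product with the smooth factor $\widehat{\mathbb A}^3$ keeps irreducibility and codimension, and puts the singular locus exactly along $\{E=0\}\times\widehat{\mathbb A}^3=\Theta^{-1}(0)$. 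By Theorem~\ref{thm:terminal-map}, $\Theta^{-1}(0)$ is the scheme-theoretic metabelian locus.

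For $V_4$-stability, $\Theta$ is $K$-equivariant, and $K$ acts on $E_{\mathrm{top}}=\Hom(W_{\mathrm{term}},L_6)$ through the graded lift. That lift acts by $\GL(W_{\mathrm{term}})\times\GL(L_6)$, so it preserves the rank of the terminal bracket. Consequently each preimage $\Theta^{-1}(\{\rank=i\})$ is $K$-stable; the explicit matrices of Theorem~\ref{thm:residual-representation} confirm this.

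The main obstacle is formal rather than geometric. The points to check are:
\begin{itemize}
\item the loci are defined scheme-theoretically by minors, so they are Fitting-ideal loci of the terminal bracket over $R_S$;
\item "irreducible" and "singular locus" make sense for complete local rings;
\item irreducibility is preserved under completion at the origin.
\end{itemize}
The last point follows because the determinantal cone is normal, so its completion at the vertex is a domain. Smoothness at the nonzero rank-one points is only used after localization, where the transitive group action applies.
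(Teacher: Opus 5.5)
Your proposal is correct and follows the paper's route. The paper's proof is the single instruction to apply the standard $2\times4$ determinantal calculation to the projection form of $\Theta$, which is exactly what you carry out. Your extra remarks fill in details the paper leaves implicit:
\begin{itemize}
\item the loci are pullbacks of rank strata under $\Theta$, so they do not depend on the noncanonical straightening;
\item normality of the determinantal cone keeps irreducibility after completion;
\item $V_4$-stability follows from $K$-equivariance of $\Theta$, together with the fact that $K$ acts on $E_{\mathrm{top}}$ through $\GL(L_6)\times\GL(W_{\mathrm{term}})$.
\end{itemize}
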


\begin{proof}
Apply the standard $2\times4$ determinantal calculation to the projection
form of $\Theta$.
\end{proof}

\subsection{Primary obstructions}
\label{sec:primary-obstructions}

The vanishing $H^3_0=0$ removes weight-zero obstructions.  The cohomology
table leaves possible primary obstructions in negative weights, which we
now compute.

The Nijenhuis--Richardson bracket \cite{NijenhuisRichardson1966,
NijenhuisRichardson1967} induces graded-symmetric pairings
\[
H^2_a(\fg_\circ;\fg_\circ)\otimes
H^2_b(\fg_\circ;\fg_\circ)
\longrightarrow H^3_{a+b}(\fg_\circ;\fg_\circ).
\]
The primary obstruction to an infinitesimal class $[\phi]$ is
\[
\operatorname{ob}([\phi])
=\left[\frac12[\phi,\phi]_{\NR}\right].
\]

\begin{theorem}[surjective negative-weight obstruction pairings]
\label{thm:obstruction-ranks}
For $\fg_\circ$, the following four maps are surjective:
\[
\begin{array}{rcll}
\Sym^2H^2_{-2}&\longrightarrow&H^3_{-4},&\rank=39,\\
H^2_{-2}\otimes H^2_{-1}&\longrightarrow&H^3_{-3},&\rank=12,\\
H^2_{-2}\otimes H^2_0&\longrightarrow&H^3_{-2},&\rank=2,\\
\Sym^2H^2_{-1}&\longrightarrow&H^3_{-2},&\rank=2.
\end{array}
\]
In particular, the negative-weight deformation problem is not
unobstructed.
\end{theorem}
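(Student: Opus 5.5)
The statement is a finite exact computation, so I will certify it over $\mathbb Q$ in the same way as Theorem~\ref{thm:cohomology-table}, using the base-change identity \eqref{eq:CE-base-change}. First, for each weight $w\in\{-2,-1,0\}$, I will fix explicit cocycle representatives of bases of $H^2_w$. These come from the pivot data of the elimination already used in Theorem~\ref{thm:cohomology-table}: $19$ cocycles of weight $-2$, $26$ of weight $-1$, and $11$ of weight $0$, the last consisting of the $\psi_i$ of \eqref{eq:psi-basis} together with the eight terminal cocycles of Proposition~\ref{prop:terminal-defect}. For $H^3_w$ with $w\in\{-4,-3,-2\}$, I will choose a complement to $B^3_w$ in $Z^3_w$ and record the corresponding projection $Z^3_w\to H^3_w$. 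Concretely, this projection is a linear functional matrix that vanishes on $\im d_2$ and has rank $\dim H^3_w$ on $\ker d_3$. The pairing is well defined on cohomology by the standard graded Lie structure with $[\mu,-]_{\NR}=-d$, so only representatives matter.

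Next, I will evaluate the Nijenhuis--Richardson brackets $[\phi_a,\phi_b]_{\NR}$ as $3$-cochains, using the insertion formula $[\alpha,\beta]_{\NR}=\alpha\circ\beta+\beta\circ\alpha$ with the sign conventions fixed by \eqref{eq:MC-convention}. For the two symmetric-square maps I only need the pairs $a\le b$, giving $190$ pairs in weight $-4$ and $351$ in weight $-2$. The mixed maps need $19\cdot26$ and $19\cdot11$ products. I will push each product through the $H^3$ projection and assemble the resulting matrix of size $(\#\text{pairs})\times\dim H^3_w$.

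Surjectivity then amounts to exhibiting a nonsingular maximal minor in each case. I need a $39\times39$ nonzero determinant for $w=-4$, $12\times12$ for $w=-3$, and $2\times2$ for the two maps into $H^3_{-2}$. Since rank can only drop under specialization, I can certify each case over $\mathbb Q$, or even modulo a good prime, with explicit pivot rows; because $\dim H^3_w$ bounds the rank from above, this gives exact rank equal to the target dimension. The two small cases may be done by hand. For instance, I expect that bracketing a weight $-2$ cocycle with $\psi_0$ and $\psi_1$ already produces two independent classes in $H^3_{-2}$.

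The final claim follows from the symmetric-square surjection. A nonzero quadratic map $\Sym^2H^2_{-2}\to H^3_{-4}$ whose image spans has nonzero values, and in characteristic zero polarization shows that some $[\phi]\in H^2_{-2}$ has $\operatorname{ob}([\phi])\ne0$. That class is then a first-order deformation which does not extend to second order.

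I expect the main obstacle to be bookkeeping rather than theory. The weight-$(-4)$ matrix is large, and the result depends on correct signs in the Nijenhuis--Richardson bracket and on correctly reducing modulo $B^3_w$. I would guard against sign errors by checking first that $[\psi_i,\psi_j]_{\NR}=0$, as in Theorem~\ref{thm:affine-slice}, and that $d[\phi,\phi]_{\NR}=0$ holds for sample cocycles. Only after those checks would I trust the rank certificates.
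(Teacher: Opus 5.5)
Your plan is correct and is essentially the paper's proof: the paper assembles exactly these $39\times190$, $12\times494$, $2\times209$ and $2\times351$ obstruction matrices and certifies full row rank. It works directly over $\mathbb F_{1\,000\,003}$ and $\mathbb F_{1\,000\,033}$, after checking that the Chevalley--Eilenberg ranks there agree with the rational ones so that cohomology commutes with reduction; that check is what your ``good prime'' must mean if you compute modulo $p$ rather than reducing $p$-integral rational data.

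One caution about your aside: the hand shortcut through $\psi_0,\psi_1$ will probably fail. Along the Lie family $\mu+t\psi_i$, the rank of $d_2$ on $C^2_{-2}$ appears to stay $102$, since the appendix reports the same $H^2$ table at random pencils. Then every $\phi\in Z^2_{-2}$ extends to cocycles $\phi_t$, and differentiating gives $[\psi_i,\phi]_{\NR}=d\dot\phi$. So $P$ likely pairs to zero with $H^2_{-2}$, and the rank two in $H^3_{-2}$ must then come from the eight transverse cocycles.
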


\begin{proof}
Over each of $\mathbb F_{1\,000\,003}$ and
$\mathbb F_{1\,000\,033}$, the program in
Table~\ref{tab:finite-certificates} constructs the induced obstruction
maps after reducing the integral bracket.  The relevant
Chevalley--Eilenberg ranks agree with their exact rational values, so
cohomology commutes with reduction at these primes.  The four induced
matrices have full row ranks $39,12,2,2$, respectively.  Rank cannot
increase under reduction from characteristic zero; hence the four
characteristic-zero maps are surjective.  Equation
\eqref{eq:CE-base-change} gives the result over $\kk$.
\end{proof}

\begin{corollary}[nonnegative-weight unobstructedness]
\label{cor:nonnegative-unobstructed}
Let $\mathcal C^i=C^{i+1}(\fg_\circ;\fg_\circ)$ be the shifted
Nijenhuis--Richardson DGLA.  Its sub-DGLA of cochains of Carnot weight at
least zero satisfies $H^2(\mathcal C_{\ge0})=0$.  Its formal deformation
functor is unobstructed.
\end{corollary}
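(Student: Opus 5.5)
The plan is to show that every positive Carnot weight contributes nothing to $H^3$, and then to reduce the whole statement to weight zero, where \eqref{eq:H20H30} already gives $H^3_0=0$.

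\textbf{Step 1 (grading).} The Chevalley--Eilenberg differential preserves Carnot weight. Hence the sub-DGLA $\mathcal C_{\ge0}$ splits as a complex into the direct sum of the weight pieces $C^\bullet_w$ with $w\ge0$, so its cohomology is
\[
H^2(\mathcal C_{\ge0})=\bigoplus_{w\ge0}H^3_w(\fg_\circ;\fg_\circ).
\]
Two inputs are needed here. First, the bracket of two cochains of weights $a$ and $b$ has weight $a+b$, so $\mathcal C_{\ge0}$ is closed under the bracket. Second, it is closed under $d$, because $d$ preserves weight.

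\textbf{Step 2 (cohomology input).} Theorem~\ref{thm:cohomology-table} lists $\dim H^3_w$. It is nonzero only for $-7\le w\le-2$, and $H^3_w=0$ otherwise; in particular $H^3_0=0$ by \eqref{eq:H20H30}. Because $\fg_\circ$ is finite dimensional, only finitely many weights occur, so the direct sum in Step 1 is finite and every summand vanishes. This gives $H^2(\mathcal C_{\ge0})=0$.

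\textbf{Step 3 (unobstructedness).} For a DGLA, the obstruction to lifting a Maurer--Cartan element across a small extension $A'\twoheadrightarrow A$ with kernel $J$ is a class in $H^2(\mathcal C_{\ge0})\otimes J$. This is the standard obstruction theory for the Deligne functor. The class therefore vanishes, every Maurer--Cartan solution lifts, and the deformation functor is smooth. Under the sign convention \eqref{eq:MC-convention} the obstruction is still $\bigl[\tfrac12[\alpha,\alpha]_{\NR}\bigr]$ up to sign, so nothing changes.

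The main point to check is that the shifted indexing matches: $\mathcal C^2=C^3(\fg_\circ;\fg_\circ)$, so $H^2(\mathcal C)$ is the adjoint $H^3$. One must also confirm that ``weight at least zero'' uses the same weight convention as in the table. With both checked, the argument is purely formal.
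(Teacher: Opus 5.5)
Your proposal is correct and follows the paper's argument. The Nijenhuis--Richardson bracket adds weights and $d$ preserves them, so $\mathcal C_{\ge0}$ is a sub-DGLA with $H^2(\mathcal C_{\ge0})=\bigoplus_{w\ge0}H^3_w(\fg_\circ;\fg_\circ)$, and this vanishes by Theorem~\ref{thm:cohomology-table}. The only difference is that you spell out the standard DGLA obstruction theory (obstructions in $H^2\otimes J$), which the paper leaves implicit.
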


\begin{proof}
Weights add under the bracket, and the obstruction space is
\[
H^2(\mathcal C_{\ge0})
=\bigoplus_{w\ge0}H^3_w(\fg_\circ;\fg_\circ)=0
\]
by Theorem~\ref{thm:cohomology-table}.
\end{proof}

\medskip
\noindent
Taken together, these results identify one mechanism behind the general
and computed parts of the paper.  Formation turns graded metabelian Lie
laws into commutative Quot data, while derived--derived restriction
detects infinitesimal directions leaving that locus.  Intrinsic
lower-central tensors recover the full inverse-system module, and the
rank-three failure of scalar faithfulness shows why the parameters must
remain module-valued.  For the quartic pencil, this mechanism is realized
by a formally smooth map from the $11$-dimensional graded germ to an
$8$-dimensional terminal-bracket space, with the $3$-dimensional
binary-pencil germ as its scheme-theoretic zero fiber.  The residual
symmetry and primary obstructions describe the surrounding graded
deformation problem.

\appendix

\section{Computational verification}
\label{app:verification}

All matrices below are constructed from the displayed integral bracket in
the homogeneous basis fixed in Section~\ref{sec:quartic-pencil}.  Put
$c^q_w=\dim C^q_w$ and
$r^q_w=\rank(d:C^q_w\to C^{q+1}_w)$.  Fraction-free elimination over
$\mathbb Q$ gives the following compact rank certificate for every weight
in which $H^2$ or $H^3$ is nonzero.

\begin{table}[H]
\centering
\caption{Exact Chevalley--Eilenberg rank certificate.  All omitted
weights with nonzero cochain blocks are exact.}
\label{tab:ce-certificate}
\small
\setlength{\tabcolsep}{5pt}
\begin{tabular}{@{}rccc@{}}
\toprule
$w$ & $(c^1_w,c^2_w,c^3_w)$ & $(r^1_w,r^2_w,r^3_w)$
& $(\dim H^2_w,\dim H^3_w)$\\
\midrule
$-7$ & $(0,94,632)$  & $(0,94,504)$  & $(0,34)$\\
$-6$ & $(0,143,559)$ & $(0,143,356)$ & $(0,60)$\\
$-5$ & $(4,176,452)$ & $(4,172,218)$ & $(0,62)$\\
$-4$ & $(10,183,324)$& $(10,173,112)$& $(0,39)$\\
$-3$ & $(14,166,210)$& $(14,152,46)$ & $(0,12)$\\
$-2$ & $(21,142,120)$& $(21,102,16)$ & $(19,2)$\\
$-1$ & $(30,112,60)$ & $(30,56,4)$   & $(26,0)$\\
$0$  & $(38,73,25)$  & $(37,25,0)$   & $(11,0)$\\
$1$  & $(30,40,8)$   & $(28,8,0)$    & $(4,0)$\\
\bottomrule
\end{tabular}
\end{table}

The remaining computer-assisted statements reduce to the finite
certificates in Table~\ref{tab:finite-certificates}.  A matrix size
$a\times b$ means $a$ rows and $b$ columns.

\begin{table}[H]
\centering
\caption{Finite certificates for the computed application.}
\label{tab:finite-certificates}
\small
\setlength{\tabcolsep}{4pt}
\begin{tabular}{@{}>{\raggedright\arraybackslash}p{0.17\textwidth}
>{\raggedright\arraybackslash}p{0.20\textwidth}
>{\raggedright\arraybackslash}p{0.29\textwidth}
>{\raggedright\arraybackslash}p{0.23\textwidth}@{}}
\toprule
Assertion & Matrix or data & Exact certificate & Archive program\\
\midrule
Weighted cohomology
& $d_q(w)$ for $-16\le w\le4$; at $w=0$, sizes $73\times38$ and
$25\times73$
& $d_{q+1}d_q=0$; ranks $37,25$ and pivot minors $-48,1$
& \path{compute_ce.py}; \path{check_ce_exact.py}\\[2pt]

Pencil-motion tangent
& $73\times41$ matrix $[d_1\mid\psi_0\mid\psi_1\mid\psi_2]$
& Successive ranks $37,38,39,40$; a $40$-minor is $-192$
& \path{verify_polynomial_slice.py}\\[2pt]

Weight-zero basis and terminal defect
& $73\times49$ augmented matrix and $8\times8$ restriction matrix
& Rank $48$ with a $48$-minor $-1536$; terminal determinant $2$
& \path{compute_h2_zero_basis.py};
\path{verify_metabelian_tangent.py}\\[2pt]

Residual $V_4$-action
& Signed $3\times3$, $8\times8$, and $11\times11$ matrices
& $\sigma^2=\tau^2=(\sigma\tau)^2=1$ and the character multiplicities of
Theorem~\ref{thm:residual-representation}: $(3,3,2,3)$
& \path{stabilizer_terminal_representation.py}\\[2pt]

Primary obstruction maps
& $39\times190$, $12\times494$, $2\times209$, $2\times351$
& Ranks $39,12,2,2$ modulo both $1\,000\,003$ and $1\,000\,033$
& \path{compute_obstructions.py}\\
\bottomrule
\end{tabular}
\end{table}

For completeness, order the terminal coordinates by
\[
(Z_0,Z_{x^2}),\ (Z_0,Z_{xy}),\ (Z_0,Z_{y^2}),\ (Z_x,Z_y),
\qquad\text{each followed by }T_0,T_1.
\]
The columns of the eight complementary cocycles then restrict to
\begin{equation}\label{eq:terminal-certificate}
\left(
\begin{array}{rrrrrrrr}
0&0&0&0&-1&0&0&0\\
-1&0&0&0&0&0&0&0\\
0&0&0&0&0&0&1&1\\
0&0&0&-1&0&0&0&0\\
0&0&0&0&0&1&0&0\\
0&1&0&0&0&0&0&0\\
0&0&0&0&0&0&-1&1\\
0&0&1&1&0&0&0&0
\end{array}
\right),
\qquad \det=2.
\end{equation}

For the modular rows, the required cohomology ranks first agree with their
rational values, so cohomology commutes with reduction at both good
primes.  Full target rank after reduction then forces full target rank in
characteristic zero.  The source archive
\cite{BlattnerCode2026} contains the construction programs, exact block
ranks, sparse cocycles, commands, and the SHA-256 manifest
\path{COMPUTATION_CERTIFICATE.md}.  Five independently sampled regular
integer pencils reproduce the complete $H^2$ table; this is a robustness
check, not part of the proof of Proposition~\ref{prop:generic-H20}.
\begingroup
\renewcommand{\bibliofont}{\small}

\section*{Funding}
This research did not receive any specific grant from funding
agencies in the public, commercial, or not-for-profit sectors.

\section*{Declaration of competing interest}
The author declares that they have no known competing financial interests or
personal relationships that could have appeared to influence the work
reported in this paper.%

\section*{Data availability}
The verification programs and the SHA-256 manifest described in
Appendix~\ref{app:verification} accompany this submission as
supplementary material and are permanently archived at
\url{https://doi.org/10.5281/zenodo.21768573}.

\section*{Declaration of generative AI and AI-assisted technologies in the
manuscript preparation process}

During the preparation of this work, the author used Claude (Anthropic) for drafting and editorial feedback.  After using
these tools, the author reviewed and verified the content and takes full
responsibility for the published article.

\bibliographystyle{amsplain}
\bibliography{references}

@article{DrenskyFindik2012,
  author  = {Drensky, Vesselin and F{\i}nd{\i}k, {\c{S}}ehmus},
  title   = {Inner and Outer Automorphisms of Free Metabelian Nilpotent {L}ie Algebras},
  journal = {Communications in Algebra},
  volume  = {40},
  number  = {12},
  year    = {2012},
  pages   = {4389--4403},
  doi     = {10.1080/00927872.2011.610071},
  eprint  = {1003.0350},
  archivePrefix = {arXiv},
  primaryClass  = {math.RA}
}

@article{GalitskiTimashev1999,
  author  = {Galitski, L. Yu. and Timashev, D. A.},
  title   = {On the Classification of Metabelian {L}ie Algebras},
  journal = {Journal of Lie Theory},
  volume  = {9},
  number  = {1},
  year    = {1999},
  pages   = {125--156},
  doi     = {10.5802/jolt.163}
}

@article{Gauger1973,
  author  = {Gauger, Michael A.},
  title   = {On the Classification of Metabelian {L}ie Algebras},
  journal = {Transactions of the American Mathematical Society},
  volume  = {179},
  year    = {1973},
  pages   = {293--329},
  doi     = {10.1090/S0002-9947-1973-0325719-0}
}

@article{GavioliMontiYoung2001,
  author  = {Gavioli, Norberto and Monti, Valerio and Young, David S.},
  title   = {Metabelian Thin {L}ie Algebras},
  journal = {Journal of Algebra},
  volume  = {241},
  number  = {1},
  year    = {2001},
  pages   = {102--117},
  doi     = {10.1006/jabr.2001.8752}
}

@article{Iarrobino2004,
  author  = {Iarrobino, Anthony},
  title   = {Ancestor Ideals of Vector Spaces of Forms, and Level Algebras},
  journal = {Journal of Algebra},
  volume  = {272},
  number  = {2},
  year    = {2004},
  pages   = {530--580},
  doi     = {10.1016/S0021-8693(03)00425-3}
}

@article{HarimaMiglioreNagelWatanabe2003,
  author  = {Harima, Tadahito and Migliore, Juan C. and Nagel, Uwe and Watanabe, Junzo},
  title   = {The weak and strong {L}efschetz properties for {A}rtinian {$K$}-algebras},
  journal = {Journal of Algebra},
  volume  = {262},
  number  = {1},
  year    = {2003},
  pages   = {99--126},
  doi     = {10.1016/S0021-8693(03)00038-3}
}

@article{Kapranov2012,
  author  = {Kapranov, Mikhail},
  title   = {Real Mixed {H}odge Structures},
  journal = {Journal of Noncommutative Geometry},
  volume  = {6},
  number  = {2},
  year    = {2012},
  pages   = {321--342},
  doi     = {10.4171/JNCG/93},
  eprint  = {0802.0215},
  archivePrefix = {arXiv},
  primaryClass  = {math.AG}
}

@misc{Kapranov2015,
  author        = {Kapranov, Mikhail},
  title         = {Membranes and Higher Groupoids},
  year          = {2015},
  eprint        = {1502.06166},
  archivePrefix = {arXiv},
  primaryClass  = {math.DG},
  note          = {Preprint, arXiv:1502.06166},
  url           = {https://arxiv.org/abs/1502.06166}
}

@incollection{KleimanKleppe2025,
  author        = {Kleiman, Steven L. and Kleppe, Jan O.},
  title         = {Macaulay Duality and Its Geometry},
  booktitle     = {Perspectives on Four Decades of Algebraic Geometry, Volume 1},
  series        = {Progress in Mathematics},
  volume        = {351},
  year          = {2025},
  pages         = {337--427},
  publisher     = {Birkh{\"a}user},
  address       = {Cham},
  doi           = {10.1007/978-3-031-66230-0_13},
  eprint        = {2210.10934},
  archivePrefix = {arXiv},
  primaryClass  = {math.AG}
}

@article{LegerLuks1972,
  author  = {Leger, G. and Luks, E.},
  title   = {On a Duality for Metabelian {L}ie Algebras},
  journal = {Journal of Algebra},
  volume  = {21},
  number  = {2},
  year    = {1972},
  pages   = {266--270},
  doi     = {10.1016/0021-8693(72)90021-X}
}

@article{NijenhuisRichardson1966,
  author  = {Nijenhuis, Albert and Richardson, Roger W.},
  title   = {Cohomology and Deformations in Graded {L}ie Algebras},
  journal = {Bulletin of the American Mathematical Society},
  volume  = {72},
  year    = {1966},
  pages   = {1--29}
}

@article{NijenhuisRichardson1967,
  author  = {Nijenhuis, Albert and Richardson, Roger W.},
  title   = {Deformations of {L}ie Algebra Structures},
  journal = {Journal of Mathematics and Mechanics},
  volume  = {17},
  number  = {1},
  year    = {1967},
  pages   = {89--105}
}

@article{PapadimaSuciu2004,
  author        = {Papadima, Stefan and Suciu, Alexander I.},
  title         = {Chen {L}ie Algebras},
  journal       = {International Mathematics Research Notices},
  volume        = {2004},
  number        = {21},
  year          = {2004},
  pages         = {1057--1086},
  doi           = {10.1155/S1073792804132017},
  eprint        = {math/0307087},
  archivePrefix = {arXiv},
  primaryClass  = {math.GR}
}

@article{Papistas1992,
  author  = {Papistas, Athanassios I.},
  title   = {On Automorphism Groups of 2-Generator Metabelian {L}ie Algebras},
  journal = {Communications in Algebra},
  volume  = {20},
  number  = {7},
  year    = {1992},
  pages   = {1937--1953},
  doi     = {10.1080/00927879208824441}
}

@article{Papistas1996,
  author  = {Papistas, Athanassios I.},
  title   = {{IA}-Automorphisms of 2-Generator Metabelian {L}ie Algebras},
  journal = {Algebra Colloquium},
  volume  = {3},
  number  = {3},
  year    = {1996},
  pages   = {193--198}
}

@book{Reutenauer1993,
  author    = {Reutenauer, Christophe},
  title     = {Free {L}ie Algebras},
  series    = {London Mathematical Society Monographs, New Series},
  volume    = {7},
  publisher = {Oxford University Press},
  year      = {1993}
}

@misc{Suciu2026,
  author        = {Suciu, Alexander I.},
  title         = {Koszul Modules, Holonomy {L}ie Algebras, and Resonance
                   of Groups and {CDGA}s},
  year          = {2026},
  eprint        = {2604.24986},
  archivePrefix = {arXiv},
  primaryClass  = {math.AT},
  doi           = {10.48550/arXiv.2604.24986},
  url           = {https://arxiv.org/abs/2604.24986}
}

@article{Wall1998,
  author  = {Wall, C. T. C.},
  title   = {Pencils of Binary Quartics},
  journal = {Rendiconti del Seminario Matematico della Universit{\`a} di Padova},
  volume  = {99},
  year    = {1998},
  pages   = {197--217},
  url     = {https://www.numdam.org/item/RSMUP_1998__99__197_0/}
}

@article{BarrionuevoTiraoSulca2023,
  author  = {Barrionuevo, Josefina and Tirao, Paulo and Sulca, Diego},
  title   = {Deformations and Rigidity in Varieties of {L}ie Algebras},
  journal = {Journal of Pure and Applied Algebra},
  volume  = {227},
  number  = {3},
  year    = {2023},
  pages   = {107217},
  doi     = {10.1016/j.jpaa.2022.107217},
  eprint  = {2101.09670},
  archivePrefix = {arXiv}
}

@misc{BlattnerCode2026,
  author       = {Blattner, Marcel},
  title        = {Verification Programs for ``{Q}uot-{S}tack Moduli and
                  Transverse Deformations of Graded Metabelian {L}ie Algebras''},
  howpublished = {Version 1.1 [software], Zenodo},
  year         = {2026},
  doi          = {10.5281/zenodo.21768573},
  url          = {https://doi.org/10.5281/zenodo.21768573},
  note         = {\url{https://doi.org/10.5281/zenodo.21768573}}
}
\endgroup

\end{document}